\numberwithin{equation}{section}
\newtheorem{Theorem}{Theorem}[section]
\newtheorem{Lemma}{Lemma}[section]
\newtheorem{Corollary}{Corollary}[section]
\theoremstyle{definition}
\newtheorem{Remark}{Remark}[section]
\theoremstyle{remark}
\date{}
\begin{document}

\title{ Globally conformally K\"ahler Einstein metrics on certain holomorphic bundles}
\author{Zhiming Feng}
\address{School of Mathematics and Physics, Leshan Normal University, \\Leshan, Sichuan 614000, P.R. China}
\email{fengzm2008@163.com}

\subjclass[2010]{32Q15,\ 53C55}
\keywords{Einstein  metrics, Conformally K\"ahler Einstein metrics,  CKEM metrics, Holomorphic bundles}

\date{}
\maketitle

\begin{abstract}
The subject of this paper is the explicit momentum construction of complete  Einstein metrics by ODE methods. Using the Calabi ansatz, further generalized by Hwang-Singer, we show that there are  non-trivial complete conformally K\"ahler Einstein metrics on certain Hermitian holomorphic vector bundles and their subbundles over complete K\"ahler-Einstein manifolds. In special cases, we give the explicit expressions of of these metrics. These examples show  that there is a  compact K\"ahler manifold $M$ and its subvariety $N$ whose codimension is greater than 1 such that there is a complete conformally K\"ahler Einstein metric on $M-N$.
\end{abstract}



\section{Introduction and main results}

Let $M$ be a complex $m$-dimensional K\"ahler manifold  endowed with a K\"ahler metric $g$ with respect to an integrable almost complex structure $J$, and $f$ be a positive smooth function $f$ on $M$. A Hermitian metric $\widetilde{g}=\frac{1}{f^2}g$ on $(M,J)$ is called a globally conformally K\"ahler metric.

If there is a constant $\mu$ such that the Ricci tensor $\mathrm{Ric}_{\widetilde{g}}$ of the metric $\widetilde{g}$ satisfies $\mathrm{Ric}_{\widetilde{g}}=\mu\;\widetilde{g}$, then the Hermitian metric $\widetilde{g}$ on $(M,J)$ is called a conformally K\"ahler Einstein metric; if $f$ also satisfies $\mathrm{d}f\wedge\mathrm{d}\triangle f=0$, then the metric $\widetilde{g}$ is called a strongly conformally K\"ahler Einstein metric; this is automatic if $m\geq 3$.

If $\mathrm{Ric}_{\widetilde{g}}(J\cdot,J\cdot)=\mathrm{Ric}_{\widetilde{g}}(\cdot,\cdot)$ and the scalar curvature $s_{\widetilde{g}}$ of $\widetilde{g}$ is constant, then the metric $\widetilde{g}$ on $(M,J)$ is called a conformally K\"ahler, Einstein-Maxwell metric (cKEM metric for short), see \cite{LeBrun-2015,LeBrun-2016}. Note that $\mathrm{Ric}_{\widetilde{g}}(J\cdot,J\cdot)=\mathrm{Ric}_{\widetilde{g}}(\cdot,\cdot)$ if and only if $\overline{\partial}(\mathrm{grad}^{1,0}f)=\overline{\partial}(g^{j\bar{q}}f_{\bar{q}}\partial_j)=0$. Obviously, a conformally K\"ahler Einstein metric is a cKEM metric. When $f$ is constant on $M$, a cKEM metric is a constant scalar curvature K\"ahler (cscK for short) metric, so a cKEM metric with nonconstant $f$  is referred to as non-trivial.

For the case of compact, so far, not many examples of non-trivial cKEM metrics are known.  The conformally K\"ahler Einstein metrics  constructed by Page  \cite{Page} on the one-point-blow-up of $\mathbb{CP}^2$, by Chen-LeBrun-Weber  \cite{Chen-LeBrun-Weber} on the two-point-blow-up of $\mathbb{CP}^2$, by Apostolov-Calderbank-Gauduchon  \cite{Apostolov-Calderbank-Gauduchon-2016,Apostolov-Calderbank-Gauduchon-2015} on 4-orbifolds and by B\'erard-Bergery \cite{Berard-Bergery} on $\mathbb{P}^1$-bundles over Fano K\"ahler-Einstein manifolds.  Non-Einstein cKEM examples are given by LeBrun \cite{LeBrun-2015,LeBrun-2016} on $\mathbb{CP}^1 \times \mathbb{CP}^1$ and the one-point-blow-up of $\mathbb{CP}^2$, by Koca-T{\o}nnesen-Friedman \cite{Koca-Tonnesen-Friedman-2016} on ruled surfaces of higher genus, and by Futaki-Ono \cite{Futaki-Ono-2018} on $\mathbb{CP}^1\times M$ where $M$ is a compact constant scalar curvature K\"ahler manifold of arbitrary dimension. For more research on cKEM metrics, please refer to Apostolov-Maschler \cite{Apostolov-Maschler-2019}, Apostolov-Maschler-Tonnesen-Friedman \cite{Apostolov-Maschler-Tonnesen-Friedman}, Futaki-Ono \cite{Futaki-Ono-2019} and Lahdili \cite{Lahdili-2019,Lahdili-2017,Lahdili-2019-2}.

In \cite{Derdzinski-Maschler-2003}, Derdzinski-Maschler gave the local classification of strongly conformally K\"ahler Einstein metrics. In \cite{Apostolov-Calderbank-Gauduchon-2006},  Apostolov-Calderbank-Gauduchon also gave the local classification of strongly conformally K\"ahler Einstein metrics by using the more general Hamiltonian 2-forms method. That is, if $\widetilde{g}=\frac{1}{f^2}g$ is a strongly conformally K\"ahler Einstein metric, then the metric $g$ can be expressed as
\begin{eqnarray*}
  g &=& zg_0+\frac{z^{m-1}}{F(z)}\mathrm{d}z^2+\frac{F(z)}{z^{m-1}}\theta^2, \\
  \omega &=&\mathrm{d}\mathrm{d}^c\int^z\frac{u^m}{F(u)}\mathrm{d}u=z\omega_0+\mathrm{d}z\wedge\theta,\;\mathrm{d}\theta=\omega_0, J\mathrm{d}z =\frac{F(z)}{z^{m-1}} \theta,\;J\theta=-\frac{z^{m-1}}{F(z)}\mathrm{d}z, \\
  \rho_0&=&c\,\omega_0,
\end{eqnarray*}
where $c$ is a constant, $f=qz+p$ and
\begin{equation}\label{1.0.1 }
  F''(z)-\left(\frac{2(m-1)q}{qz+p}+\frac{m}{z}\right)F'(z)+\frac{2m(m-1)q}{z(qz+p)}F(z)+2cz^{m-2}=0.
\end{equation}

In \cite{Derdzinski-Maschler-2006,Derdzinski-Maschler-2005}, Derdzinski-Maschler gave the classification compact conformally Einstein K\"ahler manifolds $M$,  namely $M$ are the total spaces of  holomorphic $\mathbb{CP}^1$ bundles over  compact K\"ahler-Einstein manifolds.

For the case of non-compact, in \cite{Feng-2020-2}, we study the existence of cKEM metrics on rotationally invariant domains in $\mathbb{C}^d$. For non-compact examples of conformally K\"ahler Einstein metrics, see \cite{Derdzinsk-1984, Apostolov-Calderbank-Gauduchon-2003}.

Let $N$ be the generic norm of an irreducible bounded symmetric domain $\Omega$, for $\lambda>0$ and $r\in\mathbb{N}_{+}=\mathbb{N}-\{0\}$, the Cartan-Hartogs domain defined by
\begin{equation*}\label{1.0.2}
  M=\left\{(z,w)\in \Omega\times\mathbb{C}^{r}: e^{\lambda\phi(z)}\|w\|^2<1\right\},\phi(z)=-\log \mathrm{N}(z,z),
\end{equation*}
its holomorphic invariant K\"ahler metrics can be expressed as
\begin{equation*}
   \omega=\sqrt{-1}\partial\overline{\partial}\left(\nu\phi(z)+F(t)\right),\,t=\lambda\phi(z)+\log\|w\|^2,\nu> 0.
\end{equation*}
So holomorphic invariant conformally K\"ahler metrics $\widetilde{g}$ associated with K\"ahler forms $\widetilde{\omega}$  on the Cartan-Hartogs domain $M$ can be expressed as
\begin{equation*}\label{1.0.3}
   \widetilde{\omega}=\frac{\sqrt{-1}}{f^2(t)}\partial\overline{\partial}\left(\nu\phi(z)+F(t)\right).
\end{equation*}

In this paper, using the Calabi ansatz and  the momentum construction \cite{Hwang-Singer-2002}, we  construct non-trivial complete conformally K\"ahler Einstein metrics $\widetilde{g}$ associated with K\"ahler forms $\widetilde{\omega}$ on certain holomorphic bundles, which are locally expressed as
\begin{equation*}
  M=\left\{(z,w)\in \Omega\times\mathbb{C}^{r}: e^{\lambda\phi(z)}\|w\|^2\in \mathcal{I}\right\} (\mathcal{I}\; \mathrm{are}\; \mathrm{intervals}\; \mathrm{on} \;\mathbb{R})
\end{equation*}
and
\begin{equation*}
   \widetilde{\omega}=\frac{\sqrt{-1}}{f^2(t)}\partial\overline{\partial}\left(\nu\phi(z)+F(t)\right),\,t=\lambda\phi(z)+\log\|w\|^2,\nu\geq 0.
\end{equation*}

The following Theorem \ref{Th:1.1} is the main result of this paper.

\begin{Theorem}\label{Th:1.1}{

 Let $(L_0, h_0,\Pi_0)$ be a Hermitian holomorphic line bundle over a complete K\"{a}hler manifold $(M_0,g_0)$ of complex dimension $d$ such that $2\pi c_1 (L_0,h_0)=-\lambda\omega_0$, where $\omega_0$ denotes the K\"{a}hler form associated with the K\"{a}hler  metric $g_0$,  $c_1(L_0,h_0)$ denotes the curvature of the Chern connection on the Hermitian holomorphic bundle $(L_0,h_0)$, and $\Pi_0:L_0\rightarrow M_0$ is the bundle projection.  The curvature $c_1(L_0,h_0)$ is given by
$$c_1(L_0,h_0) = -\frac{\sqrt{-1}}{2\pi}\partial\bar{\partial}\log h_0(\sigma(z), \sigma(z))$$
for a trivializing holomorphic section $\sigma : U \rightarrow L_0\backslash\{0\}$.

Let $E$ be the direct sum of $r$ copies of $L_0$, i.e. $E = L_0^{\oplus r}$ with an associated hermitian metric  denoted by $h$. The projective bundle $\mathbb{P}(E \oplus 1)$ can be viewed as a compactification of $E$,  where $1$ stands for the trivial line bundle $M_0\times \mathbb{C}$. The projection of the vector bundle $E$ to $M_0$  denoted by $\Pi : E \rightarrow M_0$.

Let $M$ be one of the following sets:
$$E=L_0^{\oplus r}, \;E^{*}=\{u\in E:0<h(u,u)\}, E-\{u\in E: h(u,u)\leq 1\}, $$
$$\mathbb{B}^r_{L_0}=\{u\in E:h(u,u)<1\},\; \mathbb{B}^{r*}_{L_0}= \{u\in E:0<h(u,u)<1\},$$
 $$\mathbb{P}(E \oplus 1)-M_0\;\text{and}\;\mathbb{P}(E \oplus 1)-\{u\in E: h(u,u)\leq 1\}.$$

Let $g_F$ be a K\"{a}hler  metric  on $M$ associated with the K\"{a}hler form
\begin{equation*}
  \omega_F(u)=\nu\;(\Pi^{*}\omega_0)(u)+\sqrt{-1}\partial\overline{\partial}F(\log h(u,u))\;\;(u\in E,\nu=0\;\text{or}\;1).
\end{equation*}
($\omega_F$ can be extended across $M-E$ if $M=\mathbb{P}(E \oplus 1)$, the extension of $\omega_F$ on $M$ is still expressed as $\omega_F$).

Set
\begin{equation*}
 t=\log h(u,u),\; x=F'(t),\;\varphi(x)=F''(t),\;n=d+r,
\end{equation*}
\begin{equation*}
  x_0=\inf\{F'(t)\},\;x_1=\sup\{F'(t)\}, \;f=aF'(t)+b=ax+b>0,\;a\lambda\neq 0.
\end{equation*}

Let $\widetilde{g}=\frac{1}{f^2}g_F$ be a conformally K\"ahler metric on $M$. The following conclusions hold.

$(\mathrm{I})$  For $\nu=1$ and $b=1$, $\widetilde{g}$ is  a complete Einstein  metric (namely $\mathrm{Ric}_{\widetilde{g}}=\mu \;\widetilde{g}$) $M$ if and only if
\begin{equation}\label{1.1}
 \left\{ \begin{array}{l}
  \mathrm{Ric}_{g_{0}}=(\lambda+\mu-2a)\;g_{0},\\\\
  \varphi(x)=\frac{(1+ax)^{2d+1}(a\lambda x+2a-\lambda)}{(1+\lambda x)^{d}}\int_0^x\frac{(1+\lambda u)^{d}\left(\mu(1+\lambda u)-(\lambda+\mu-2a)(1+au)^2\right)}{(1+au)^{2d+2}(a\lambda u+2a-\lambda)^2}\mathrm{d}u,x_0\leq x<x_1,\\\\
 \mu\leq 0,a\leq \lambda,a<0, r=1,
 \end{array}\right.
\end{equation}
here $x_0=0$, $x_1=-\frac{1}{a}$, $M=\mathbb{B}^r_{L_0}$ for $\mu<0$, and $M=L_0^{\oplus r}$ for $\mu=0$.

$(\mathrm{II})$

$(\mathrm{II-1})$  $\widetilde{g}$ is  a complete Einstein  metric (namely $\mathrm{Ric}_{\widetilde{g}}=\mu \;\widetilde{g}$)  on $\mathbb{B}^{r*}_{L_0}$ if and only if
\begin{equation}\label{1.2}
 \left\{ \begin{array}{l}
  \mathrm{Ric}_{g_{0}}=\mu\frac{\nu+\lambda x_0}{(b+ax_0)^2}\;g_{0},\\\\
  \varphi(x)=\mu\frac{(b+ax)^{2d+1}(a\lambda x+2a\nu-b\lambda)}{(\nu+\lambda x)^{d}}\int_{x_0}^x\frac{(\nu+\lambda u)^{d}\left((\nu+\lambda u)-\frac{\nu+\lambda x_0}{(b+ax_0)^2}(b+au)^2\right)}{(b+au)^{2d+2}(a\lambda u+2a\nu-b\lambda)^2}\mathrm{d}u,\;x_0< x<x_1,\\\\
 \mu<0,a<0,r=1,
 \end{array}\right.
\end{equation}
where
$$\left\{\begin{array}{l}
    -\infty\leq x_0<x_1=-\frac{b}{a}\; \mathrm{for}\; \lambda<0\;\mathrm{and}\;-\frac{b}{a}\leq -\frac{\nu}{\lambda}, \\\\
    -\frac{\nu}{\lambda}\leq x_0<x_1=-\frac{b}{a}\; \mathrm{for}\; \lambda>0\;\mathrm{and}\; -\frac{\nu}{\lambda}<-\frac{b}{a}.
  \end{array}\right.
$$

$(\mathrm{II-2})$  $\widetilde{g}$ is a complete Einstein  metric (namely $\mathrm{Ric}_{\widetilde{g}}=\mu \;\widetilde{g}$)  on  $M=E-\{u\in E:h(u,u)\leq 1\}$ if and only if
\begin{equation}\label{1.3}
 \left\{ \begin{array}{l}
  \mathrm{Ric}_{g_{0}}=\mu\frac{\nu+\lambda x_1}{(b+ax_1)^2}\;g_{0},\\\\
  \varphi(x)=\mu\frac{(b+ax)^{2d+1}(a\lambda x+2a\nu-b\lambda)}{(\nu+\lambda x)^{d}}\int_{x_1}^x\frac{(\nu+\lambda u)^{d}\left((\nu+\lambda u)-\frac{\nu+\lambda x_1}{(b+ax_1)^2}(b+au)^2\right)}{(b+au)^{2d+2}(a\lambda u+2a\nu-b\lambda)^2}\mathrm{d}u,\;x_0< x<x_1,\\\\
 \mu<0,a>0,r=1,
 \end{array}\right.
\end{equation}
where
$$\left\{\begin{array}{l}
   -\frac{b}{a}=x_0<  x_1\leq +\infty\; \mathrm{for}\; \lambda>0\;\mathrm{and}\;-\frac{b}{a}\geq -\frac{\nu}{\lambda}, \\\\
   -\frac{b}{a}=  x_0<x_1\leq-\frac{\nu}{\lambda}\; \mathrm{for}\; \lambda<0\;\mathrm{and}\; -\frac{\nu}{\lambda}>-\frac{b}{a}.
  \end{array}\right.
$$

$(\mathrm{III})$ Let $r=1$.

$(\mathrm{III-1})$ $\widetilde{g}$ is a complete Einstein  metric (namely $\mathrm{Ric}_{\widetilde{g}}=\mu \;\widetilde{g}$)  on  $M=\mathbb{P}(E \oplus 1)-M_0$ if and only if
\begin{equation}\label{1.4}
 \left\{ \begin{array}{l}
  \mathrm{Ric}_{g_{0}}=\gamma\;g_{0},\\\\
  \gamma=\frac{a\lambda x_1+2a\nu-b\lambda}{ax_1+b},\\\\
 \varphi(x)=-\gamma\frac{(a\lambda x+2a\nu-b\lambda)(ax+b)^{2d+1}}{(\nu+\lambda x)^d}\int_{x_1}^x\frac{(\nu+\lambda u)^d}{(au+b)^{2d}(a\lambda u+2a\nu-b\lambda)^2}\mathrm{d}u,\;x_0<x<x_1,\\\\
  a>0, \mu=0,
 \end{array}\right.
\end{equation}
where
$$\left\{\begin{array}{l}
   -\frac{b}{a}=x_0<  x_1< +\infty\; \mathrm{for}\; \lambda>0\;\mathrm{and}\;-\frac{b}{a}\geq -\frac{\nu}{\lambda}, \\\\
   -\frac{b}{a}=  x_0<x_1\leq-\frac{\nu}{\lambda}\; \mathrm{for}\; \lambda<0\;\mathrm{and}\; -\frac{\nu}{\lambda}>-\frac{b}{a}.
  \end{array}\right.
$$

$(\mathrm{III-2})$ $\widetilde{g}$ is a complete Einstein  metric (namely $\mathrm{Ric}_{\widetilde{g}}=\mu \;\widetilde{g}$)  on  $M=\mathbb{P}(E \oplus 1)-\{u\in E:h(u,u)\leq 1\}$ if and only if
\begin{equation}\label{1.5}
 \left\{ \begin{array}{l}
  \mathrm{Ric}_{g_{0}}=\gamma\;g_{0},\\\\
  \gamma=\mu\frac{\nu+\lambda x_1}{(b+ax_1)^2}+\frac{a\lambda x_1+2a\nu-b\lambda}{ax_1+b},\\\\
 \varphi(x)=\frac{(a\lambda x+2a\nu-b\lambda)(ax+b)^{2d+1}}{(\nu+\lambda x)^d}\int_{x_1}^x\frac{(\nu+\lambda u)^d(\mu(\nu+\lambda u)-\gamma(b+au)^2)}{(au+b)^{2d}(a\lambda u+2a\nu-b\lambda)^2}\mathrm{d}u,\;x_0<x<x_1,\\\\
 a>0, \mu<0,
 \end{array}\right.
\end{equation}
where
$$\left\{\begin{array}{l}
   -\frac{b}{a}=x_0<  x_1< +\infty\; \mathrm{for}\; \lambda>0\;\mathrm{and}\;-\frac{b}{a}\geq -\frac{\nu}{\lambda}, \\\\
   -\frac{b}{a}=  x_0<x_1\leq-\frac{\nu}{\lambda}\; \mathrm{for}\; \lambda<0\;\mathrm{and}\; -\frac{\nu}{\lambda}>-\frac{b}{a}.
  \end{array}\right.
$$

$(\mathrm{IV})$ Let $r>1$.

$(\mathrm{IV-1})$ $\widetilde{g}$ is a complete Einstein  metric (namely $\mathrm{Ric}_{\widetilde{g}}=\mu \;\widetilde{g}$)  on  $M=\mathbb{P}(E \oplus 1)-M_0$ if and only if
\begin{equation}\label{V-1.1.1}
 \left\{ \begin{array}{l}
  \mathrm{Ric}_{g_0}=r\lambda\;g_0,\\\\
  \varphi(x)= -r\frac{(ax+b)^{2n-1}(ax-b)}{x^{n-1}}\int_{x_1}^x \frac{ u^{n-1}}{(au+b)^{2n-2}(au-b)^2}\mathrm{d}u,\;-\frac{b}{a}=x_0<x<x_1=-\frac{(r+1)b}{(r-1)a},\\\\
  a>0, b<0,\lambda>0,\mu=\nu=0.
 \end{array}\right.
\end{equation}

$(\mathrm{IV-2})$ $\widetilde{g}$ is a complete Einstein  metric (namely $\mathrm{Ric}_{\widetilde{g}}=\mu \;\widetilde{g}$)  on  $M=\mathbb{P}(E \oplus 1)-\{u\in E:h(u,u)\leq 1\}$ if and only if
\begin{equation}\label{V-2.1.1}
 \left\{ \begin{array}{l}
  \mathrm{Ric}_{g_0}=r\lambda\;g_0,\\\\
   \mu=\frac{(ax_1+b)(r(ax_1+b)-(ax_1-b))}{x_1}<0,\\\\
 \varphi(x)= \frac{(ax+b)^{2n-1}(ax-b)}{x^{n-1}}\int_{x_1}^x \frac{ u^{n-1}\left(\mu u-r(b+au)^2\right)}{(b+au)^{2n}(au-b)^2}\mathrm{d}u,\;-\frac{b}{a}=x_0<x<x_1<-\frac{(r+1)b}{(r-1)a},\\\\
 a>0, b<0,\lambda>0,\nu=0.
 \end{array}\right.
\end{equation}

}\end{Theorem}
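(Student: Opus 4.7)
The plan is to apply the Hwang--Singer momentum construction to the Calabi ansatz for $\omega_F$, pass to the momentum variable $x = F'(t)$, $\varphi(x) = F''(t)$, reduce the Einstein equation for $\widetilde{g} = f^{-2}g_F$ to an explicit first-order linear ODE in $\varphi(x)$, solve it by an integrating factor, and then analyze the endpoint behavior of $\varphi$ on the appropriate $x$-interval to extract which parameter regimes yield a complete metric on each of the seven listed domains.

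First I would set up the K\"ahler geometry of $\omega_F$. Since $2\pi c_1(L_0,h_0) = -\lambda\omega_0$ gives $\sqrt{-1}\partial\overline{\partial}\log h_0 = \lambda\omega_0$, writing $t = \log h(u,u)$ on $E$ splits $\omega_F$ into a horizontal part proportional to $(\nu + \lambda x)\,\Pi^{\ast}\omega_0$, a Fubini--Study fiber part (present only when $r \geq 2$) proportional to $x$, and a radial fiber part proportional to $\varphi(x)$. Correspondingly $\det g_F$ is, up to a fiber-radius factor, proportional to $(\nu + \lambda x)^d\, x^{r-1}\,\varphi(x)$, so the Ricci form $\rho_F = -\sqrt{-1}\partial\overline{\partial}\log\det g_F$ splits the same way and is explicitly computable in terms of $x$, $\varphi$, $\varphi'(x)$, and $\Pi^{\ast}\rho_0$.

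Because $f = ax + b$ depends only on $t$, the quantities $\nabla^2 f$, $\Delta f$, $|df|^2$ entering the standard conformal transformation
\[
\mathrm{Ric}_{\widetilde{g}} \;=\; \mathrm{Ric}_{g_F} + (2n-2)\,f^{-1}\nabla^2_{g_F} f + \bigl(f^{-1}\Delta_{g_F} f - (2n-1)\,f^{-2}|df|^2_{g_F}\bigr)\,g_F
\]
likewise split into the same three summands. Equating this to $\mu f^{-2}g_F$ then decouples into a K\"ahler--Einstein condition on the base $\mathrm{Ric}_{g_0} = \gamma\, g_0$, with $\gamma$ an explicit function of $x_0$ or $x_1$ as listed in each case, and a single first-order linear ODE for $\varphi(x)$ whose integrating factor is exactly $(b+ax)^{2d+1}(a\lambda x + 2a\nu - b\lambda)/(\nu+\lambda x)^d$. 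Integrating from the appropriate endpoint $x_i$ reproduces \eqref{1.1}--\eqref{V-2.1.1}. This matching also explains why $r = 1$ is forced in parts (I)--(III): when $r \geq 2$ the extra fiber factor $x^{r-1}$ introduces a root of the integrating factor at the zero-section end $x = -b/a$ that is incompatible with the positivity of $\varphi$ on $(x_0,x_1)$ unless $\nu = 0$ (and in (IV-1) also $\mu = 0$).

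The principal obstacle, and the origin of the case split, is the completeness analysis. For each of the seven domains I would classify each endpoint $x_i$ of the momentum interval as one of the following: a smoothly closing-up zero or infinity section, demanding $\varphi(x_i) = 0$ with $\varphi'(x_i) \in \{\pm 2\}$ in the Hwang--Singer normalization; a genuine boundary hypersurface $h(u,u) = 1$ or $h(u,u) \to \infty$, where completeness requires $\int^{x_i}\!dx/\sqrt{\varphi(x)\,x}$ (or its analogue) to diverge; or the divisor at infinity in $\mathbb{P}(E\oplus 1)$. Combined with the positivity of $\varphi$, of $f = ax+b$, and of $\nu + \lambda x$ on the open interval $(x_0,x_1)$, these conditions force the sign relations on $a,b,\lambda,\mu,\nu$ listed in each sub-case, determine which of $-b/a$, $-\nu/\lambda$, $-\tfrac{(r+1)b}{(r-1)a}$, or $\pm\infty$ is the correct endpoint, and rule out all remaining parameter combinations. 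Converting the general momentum solution back to $F(t)$ then produces the desired complete conformally K\"ahler Einstein metric in each case.
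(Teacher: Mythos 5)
Your overall strategy---reduce the Einstein condition for $\widetilde{g}=f^{-2}g_F$ to a first-order linear ODE in the momentum profile $\varphi(x)$, solve it with the stated integrating factor, and then classify endpoint behavior to decide completeness on each of the seven domains---is the same route the paper takes (Theorem \ref{Th:4.1} for the ODE, Lemma \ref{Le:8.1} and Theorem \ref{Th:3.2} for reducing completeness to the fibers, Theorem \ref{Th:4.2} for the case analysis). However, your explanation of why $r=1$ is forced in parts (I)--(II) is not the actual mechanism and would not go through. The factor $x^{r-1}$ vanishes at $x=0$, not at the zero-section end $x=-b/a$ (that is the zero of $f$, a different endpoint), and positivity of $\varphi$ is not the obstruction. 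What actually happens is that for $r>1$ the fiber block of the Einstein equation is a matrix identity $\frac{A}{\rho^2}I_r+\frac{D}{\rho^4}\overline{w}^tw=0$, which forces \emph{two} scalar conditions $A=0$ and $A+D=0$, whereas for $r=1$ only $A+D=0$ survives. When $\nu a\neq 0$, combining $A=0$ with the constancy of $\gamma$ forces $\varphi$ to be an explicit quadratic polynomial in $x$, and back-substitution yields a polynomial identity that cannot hold; this algebraic contradiction is what excludes $r>1$ when $\nu a\neq 0$ (step (II-3) of the proof of Theorem \ref{Th:4.1}). When $\nu=0$ and $r>1$ (the case relevant to part (II)), the exclusion instead comes from the completeness analysis: the boundary conditions at a finite endpoint force $\mu=r(ax_i+b)^2/x_i>0$, contradicting $\mu\le 0$ on a noncompact manifold. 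Your proposal contains neither argument, and the ``only if'' direction of (I)--(II) cannot be closed without one of them.

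A second, smaller gap: the completeness criterion you write, divergence of $\int^{x_i}\mathrm{d}x/\sqrt{\varphi(x)\,x}$, is not the fiber arclength of $\widetilde g$. The radial part of $g_F$ is $\varphi^{-1}\,\mathrm{d}x\otimes\mathrm{d}x$, so after the conformal rescaling by $f^{-2}$ the correct integrand is $\mathrm{d}x/\big((ax+b)\sqrt{\varphi(x)}\big)$; the factor $ax+b$ (not $\sqrt{x}$) is what drives every endpoint asymptotic in the case analysis---for instance it is precisely why $x_1=-b/a$ with $\varphi(x_1)>0$ already gives a complete end in part (I). Moreover, reducing global completeness of $\widetilde g$ to this one-dimensional integral is not automatic: one needs that the fibers are totally geodesic for $\widetilde g$ and that the base contribution to $\|\mathrm{d}H\|_{\widetilde g}$ stays bounded (Lemma \ref{Le:8.1} and Theorem \ref{Th:3.2}); your plan assumes this reduction implicitly. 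Finally, the closing-up normalization in the paper's convention $t=\log h(u,u)$ is $\varphi(x_i)=0$, $\varphi'(x_i)=\pm1$ rather than $\pm2$, though that is only a matter of convention.
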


According to Theorem \ref{Th:1.1}, we have Corollary \ref{Co:1.1},  Corollary \ref{Co:1.2},  Corollary \ref{Co:1.3}, Corollary \ref{Co:1.4} and Corollary \ref{Co:1.5} for specific parameters $a, b,\lambda,\mu$ and $\nu$.

\begin{Corollary}\label{Co:1.1}{
Under the situation of Theorem \ref{Th:1.1},  let $a=\lambda=-1$ and $b=\nu=1$, then $\widetilde{g}$ is  a complete Einstein  metric (namely $\mathrm{Ric}_{\widetilde{g}}=\mu \;\widetilde{g}$) on $M$ if and only if
\begin{equation}\label{1.6}
 \left\{ \begin{array}{l}
  \mathrm{Ric}_{g_{0}}=(\mu+1)\;g_{0},\\\\
  \varphi(x)=\frac{\mu+1}{d+1}\left(1-x-(1-x)^{d+2}\right)-\frac{\mu}{d+2}\left(1-(1-x)^{d+2}\right),0\leq x<1,\\\\
 \mu\leq 0, r=1,
 \end{array}\right.
\end{equation}
here  $M=\mathbb{B}^r_{L_0}$ for $\mu<0$, and $M=L_0^{\oplus r}$ for $\mu=0$.

In particular, for $\mu=0$, \eqref{1.6} is simplified as
\begin{equation}\label{1.7}
 \left\{ \begin{array}{l}
  \mathrm{Ric}_{g_{0}}=\;g_{0},\\\\
  \varphi(x)=\frac{1}{d+1}\left(1-x-(1-x)^{d+2}\right),0\leq x<1,\\\\
  r=1.
 \end{array}\right.
\end{equation}
For $\mu=-1$, \eqref{1.6} is simplified as
\begin{equation}\label{1.8}
 \left\{ \begin{array}{l}
  \mathrm{Ric}_{g_{0}}=0,\\\\
  \varphi(x)=\frac{1}{d+2}\left(1-(1-x)^{d+2}\right),0\leq x<1,\\\\
  r=1.
 \end{array}\right.
\end{equation}
For $\mu=-(d+2)$, \eqref{1.6} is simplified as
\begin{equation}\label{1.9}
 \left\{ \begin{array}{l}
  \mathrm{Ric}_{g_{0}}=-(d+1)\;g_{0},\\\\
  \varphi(x)=x,0\leq x<1,\\\\
  F(t)=e^t,-\infty\leq t<0,\\\\
  r=1.
 \end{array}\right.
\end{equation}

 }\end{Corollary}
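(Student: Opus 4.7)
The plan is to derive this corollary as a direct specialization of Case $(\mathrm{I})$ of Theorem \ref{Th:1.1}. Note that Case $(\mathrm{I})$ is exactly the case $\nu=1$, $b=1$, and it already carries the constraints $\mu\le 0$, $a\le\lambda$, $a<0$, $r=1$, together with the ambient domain $M=\mathbb{B}^r_{L_0}$ when $\mu<0$ and $M=L_0^{\oplus r}$ when $\mu=0$ and the interval $x_0=0$, $x_1=-1/a$. Substituting $a=\lambda=-1$ is compatible with all of these requirements, so the only remaining task is to evaluate the integral in \eqref{1.1} in closed form and to simplify the three listed specializations $\mu=0,\ \mu=-1,\ \mu=-(d+2)$.

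First I would substitute $a=\lambda=-1$, $b=\nu=1$ into the integrand of \eqref{1.1}. Under this specialization the factors collapse: $1+ax=1+\lambda x=1-x$, $a\lambda x+2a-\lambda=x-1$, and $\lambda+\mu-2a=\mu+1$. After the obvious cancellations the prefactor reduces to $-(1-x)^{d+2}$, and the integrand reduces to $\frac{\mu-(\mu+1)(1-u)}{(1-u)^{d+3}}$, which splits as $\frac{\mu}{(1-u)^{d+3}}-\frac{\mu+1}{(1-u)^{d+2}}$. Next I would integrate these two elementary pieces from $0$ to $x$, obtaining antiderivatives $\frac{1}{(d+2)(1-x)^{d+2}}$ and $\frac{1}{(d+1)(1-x)^{d+1}}$ respectively, and then multiply through by $-(1-x)^{d+2}$. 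Collecting terms gives exactly \eqref{1.6}. The note $x_0=0$, $x_1=-1/a=1$ then yields the interval $0\le x<1$ automatically.

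For the three specialized parameter values, I would just plug into \eqref{1.6}. The cases $\mu=0$ and $\mu=-1$ are immediate and yield \eqref{1.7} and \eqref{1.8} respectively. For $\mu=-(d+2)$ the coefficients satisfy $\tfrac{\mu+1}{d+1}=-1$ and $\tfrac{\mu}{d+2}=-1$; a short cancellation kills the $(1-x)^{d+2}$ terms and leaves $\varphi(x)=x$. The relation \eqref{1.9} then includes the assertion $F(t)=e^{t}$, which I would recover from $\varphi(x)=F''(t)=x=F'(t)$; this linear ODE integrates to $F(t)=Ce^{t}+C'$, and the normalization $F'(t)\to 0$ as $t\to -\infty$ (i.e.\ $x_0=0$ at the zero section) forces $F(t)=e^{t}$ up to an additive constant, which is irrelevant as $F$ enters only through its derivatives in $\omega_F$.

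No real obstacle is expected: once Case $(\mathrm{I})$ is invoked, the derivation is a routine elementary integration together with algebraic bookkeeping. The only minor point requiring care is keeping track of signs when the factors $x-1$ and $1-x$ appear in numerator and denominator of the integrand; once this is handled the formula \eqref{1.6} emerges verbatim and the three sub-formulae follow by inspection.
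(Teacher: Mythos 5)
Your proposal is correct and follows exactly the route the paper intends: Corollary \ref{Co:1.1} is obtained by specializing $a=\lambda=-1$, $b=\nu=1$ in Case $(\mathrm{I})$ of Theorem \ref{Th:1.1} and carrying out the elementary integration, and your closed-form evaluation and the three specializations all check out. The only microscopic gap is in \eqref{1.9}: the condition $F'(t)\to 0$ as $t\to-\infty$ alone gives $F'(t)=Ce^{t}$ with $C>0$, and you need the upper-endpoint normalization $x\to x_1=1$ as $t\to 0^{-}$ (i.e.\ $M=\mathbb{B}^r_{L_0}=\{h<1\}$) to pin down $C=1$.
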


\begin{Remark}
 We denote the dimension $d$,  the genus $p$, and the generic norm $\mathrm{N}(z,\overline{w})$ for a Cartan domain $\Omega$ (an irreducible bounded symmetric domain $\Omega$ in $\mathbb{C}^{d}$). For a given positive integer $r$ and a positive real number $\mu$, let
 \begin{equation*}
  \phi(z)=-\mu\log \mathrm{N}(z,\overline{z}),\; h=e^{\lambda\phi(z)}\|w\|^2,\;M=\left\{(z,w)\in \Omega\times\mathbb{\mathbb{C}}^{r}: \|w\|^2<\mathrm{N}(z,\overline{z})^{\lambda\mu}\right\}.
\end{equation*}

When $r=1$ and $\mu=\frac{p}{d+1}$, by \eqref{1.9}, $\frac{\sqrt{-1}}{1-h}\partial\overline{\partial}(\phi+h)$ is the K\"ahler form of a complete conformally K\"ahler Einstein metric on $M$ for $\lambda=-1$. Note that $\sqrt{-1}\partial\overline{\partial}(\phi-\log(1-h))$ is the K\"{a}hler form of a complete K\"ahler-Einstein metric on $M$ when $\mu=\frac{p}{d+1}$ and $\lambda=1$ \cite{RWYZ}.
\end{Remark}

 \begin{Corollary}\label{Co:1.2}{
Under the situation of Theorem \ref{Th:1.1},  let $r=1$, $a=\lambda=-1$ and $b=\nu=0$, then $\widetilde{g}$ is  a complete Einstein  metric (namely $\mathrm{Ric}_{\widetilde{g}}=\mu \;\widetilde{g}$)  on $\mathbb{B}^{r*}_{L_0}$ if and only if
\begin{equation}\label{1.10}
 \left\{ \begin{array}{l}
  \mathrm{Ric}_{g_{0}}=-\frac{\mu}{x_0}\;g_{0},\\\\
  \varphi(x)=\mu\left(\frac{1}{d+2}\big((\frac{x}{x_0})^{d+2}-1\big)-\frac{\mu}{d+1}\frac{x}{x_0}\big((\frac{x}{x_0})^{d+1}-1\big)\right),\;-\infty\leq x_0< x<0,\\\\
 \mu<0.
 \end{array}\right.
\end{equation}
In particular, for $\mu=-(d+2)$ and $x_0=-\infty$, \eqref{1.10} is simplified as
 \begin{equation}\label{1.11}
 \left\{ \begin{array}{l}
  \mathrm{Ric}_{g_{0}}=0,\\\\
  \varphi(x)=1,\;-\infty< x<0,\\\\
  F(t)=\frac{1}{2}t^2,-\infty<t<0.
 \end{array}\right.
\end{equation}
 }\end{Corollary}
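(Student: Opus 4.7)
The plan is to derive Corollary~\ref{Co:1.2} as a direct specialization of Theorem~\ref{Th:1.1}(II-1) to the parameter choice $r=1$, $a=\lambda=-1$, $b=\nu=0$; essentially no new analysis is required, only substitution, a single explicit quadrature on $(x_0,x)$, and a limit.

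First I would verify that these parameter values fall inside the regime covered by~(II-1). The conditions $\mu<0$, $a<0$, $r=1$ are immediate, and with $b=\nu=0$ the two endpoints coincide as $-b/a=0=-\nu/\lambda$, so the $\lambda<0$ branch applies and forces $-\infty\leq x_0<x<x_1=0$, matching the interval stated in~\eqref{1.10}. Substituting into the base-Ricci equation gives
\[
\mu\,\frac{\nu+\lambda x_0}{(b+ax_0)^{2}}\;=\;\mu\,\frac{-x_0}{x_0^{2}}\;=\;-\frac{\mu}{x_0},
\]
as claimed.

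Next I would compute $\varphi(x)$ from the integral formula in~(II-1). Under the substitution, the prefactor simplifies (after cancellation of the $(-1)^{d}$ and $(-1)^{d+1}$ factors arising from the odd and even powers of $-x$) to $-\mu\,x^{d+2}$, while the integrand collapses to $(-1)^{d}\bigl(u^{-d-2}/x_0-u^{-d-3}\bigr)$. Integrating term-by-term on $(x_0,x)$, combining endpoint contributions via the identity $\frac{1}{d+1}-\frac{1}{d+2}=\frac{1}{(d+1)(d+2)}$, and regrouping powers of $x/x_0$ should yield the polynomial expression stated in~\eqref{1.10}. As a sanity check at $d=0$ both sides reduce to $-\tfrac{\mu}{2}(x/x_0-1)^{2}$, which is visibly positive for $x_0<x<0$ and $\mu<0$, confirming admissibility ($\varphi>0$) on the open interval.

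For the specialization $\mu=-(d+2)$ with $x_0=-\infty$, I would pass to the limit $x_0\to-\infty$ at fixed $x<0$: the cross-term $(x/x_0)\bigl((x/x_0)^{d+1}-1\bigr)$ vanishes and $\frac{1}{d+2}\bigl((x/x_0)^{d+2}-1\bigr)\to-\tfrac{1}{d+2}$, hence $\varphi\equiv 1$, the Ricci condition becomes $\mathrm{Ric}_{g_0}=0$, and integrating $F''(t)=1$ with $F'(t)\to-\infty$ as $t\to-\infty$ gives $F(t)=t^{2}/2$ on $-\infty<t<0$, matching~\eqref{1.11}. The main (essentially only) non-routine point is to confirm that this $x_0=-\infty$ limit still produces a genuinely \emph{complete} Einstein metric on $\mathbb{B}^{r*}_{L_0}$, since the completeness analysis in~(II-1) is packaged for finite $x_0$; I expect this to follow by re-examining the endpoint behaviour of the momentum variable $x=F'(t)$ at $t\to-\infty$ and $t\to 0^{-}$ with the now-explicit $\varphi$, at which stage completeness becomes automatic.
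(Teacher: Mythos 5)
Your proposal is correct and matches the paper's (implicit) proof exactly: the paper gives no separate argument for Corollary \ref{Co:1.2} beyond substituting $r=1$, $a=\lambda=-1$, $b=\nu=0$ into Theorem \ref{Th:1.1}(II-1) and evaluating the resulting elementary integral, which is precisely what you do, and the $x_0=-\infty$ case requires no extra completeness check since the theorem's hypothesis already allows $-\infty\leq x_0<x_1=-b/a$. One remark: carrying out your quadrature gives $\varphi(x)=\mu\left(\frac{1}{d+2}\bigl((\frac{x}{x_0})^{d+2}-1\bigr)-\frac{1}{d+1}\frac{x}{x_0}\bigl((\frac{x}{x_0})^{d+1}-1\bigr)\right)$, i.e.\ the coefficient of the second term is $\frac{1}{d+1}$ rather than the $\frac{\mu}{d+1}$ printed in \eqref{1.10}; your $d=0$ sanity check $-\frac{\mu}{2}(x/x_0-1)^2$ is consistent with the corrected coefficient, so the extra $\mu$ in the statement is evidently a typo (and it does not affect the $x_0\to-\infty$ limit \eqref{1.11}).
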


 \begin{Corollary}\label{Co:1.3}{
Under the situation of Theorem \ref{Th:1.1},  let $a<0,\lambda>0$, $r=1$, $b=1$ and $\nu=0$, then $\widetilde{g}$ is  a complete Einstein  metric (namely $\mathrm{Ric}_{\widetilde{g}}=\mu \;\widetilde{g}$)  on $\mathbb{B}^{r*}_{L_0}$ if and only if
\begin{equation}\label{1.12}
 \left\{ \begin{array}{l}
  \mathrm{Ric}_{g_0}=\frac{\lambda\mu x_0}{(1+ax_0)^2}g_0,\\\\
   \varphi(x)=-\frac{\mu}{(1+ax_0)^2}\frac{(1-ax)(1+ax)^{2n-1}}{x^{n-1}}\int_{x_0}^x\frac{(u-x_0)u^{n-1}(1-a^2x_0u)}{(1-au)^2(1+au)^{2n}}\mathrm{d}u,\;0\leq x_0<x<-\frac{1}{a},\\\\
    \mu<0.
 \end{array}\right.
\end{equation}
 In particular, for $x_0=0$ , \eqref{1.12}  is simplified as
 \begin{equation}\label{1.13}
 \left\{ \begin{array}{l}
  \mathrm{Ric}_{g_0}=0,\\\\
  \varphi(x)=-\frac{\mu}{n+1}x^2-\frac{\mu}{a^2}\sum_{k=3}^{n+1}\left(\frac{k-1}{n+k-1}\prod_{j=1}^{k-2}\frac{n-j}{n+j}\right)(ax)^k,\;-1<ax<0,\\\\
   \mu<0.
 \end{array}\right.
\end{equation}
As $a\rightarrow 0^{-}$,  the limit of \eqref{1.13} is
 \begin{equation}\label{1.14}
 \left\{ \begin{array}{l}
  \mathrm{Ric}_{g_0}=0,\\\\
  \omega_F=\frac{n+1}{\mu}\sqrt{-1}\partial\overline{\partial}\log\left|\lambda\phi+\log|w|^2\right|,\\\\
   \mu<0.
 \end{array}\right.
\end{equation}

 }\end{Corollary}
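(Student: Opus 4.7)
The plan is to treat \eqref{1.12}, \eqref{1.13}, and \eqref{1.14} as progressive specializations of Theorem~\ref{Th:1.1}(II-1). For \eqref{1.12}, I would simply substitute the chosen parameters $a<0$, $\lambda>0$, $r=1$, $b=1$, $\nu=0$ into \eqref{1.2} and check that the admissibility conditions match the second branch there (since $-\nu/\lambda=0<-b/a=-1/a$ under $\lambda>0$, $a<0$, giving the interval $0\le x_0<x<-1/a$). The only nontrivial algebraic step is to recognize the identity
\[
u(1+ax_0)^2-x_0(1+au)^2=(u-x_0)(1-a^2x_0 u),
\]
which rewrites the bracket $(\nu+\lambda u)-\frac{\nu+\lambda x_0}{(b+ax_0)^2}(b+au)^2$ in the compact form appearing in \eqref{1.12}; the prefactors $\lambda$, $\lambda^d$, $\lambda^2$ then cancel, and with $n=d+1$ the exponents $2d+1=2n-1$, $d=n-1$, $2d+2=2n$ line up exactly as stated.

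For \eqref{1.13} I would set $x_0=0$ in \eqref{1.12}, so the integrand reduces to $u^n/[(1-au)^2(1+au)^{2n}]$ and the Ricci condition $\mathrm{Ric}_{g_0}=0$ is immediate. The heart of the computation is evaluating the integral in closed form as a polynomial in $ax$. I would substitute $v=au$ and expand $1/[(1-v)^2(1+v)^{2n}]$ by partial fractions at $v=\pm 1$, integrate each term, and then multiply by the prefactor $(1-ax)(1+ax)^{2n-1}/x^{n-1}$; the resulting finite sum must be reorganized into the stated coefficients $\frac{k-1}{n+k-1}\prod_{j=1}^{k-2}\frac{n-j}{n+j}$. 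An efficient alternative is to verify the closed form by differentiating the proposed right-hand side of \eqref{1.13} and matching to $-\mu x^n/[(1-ax)^2(1+ax)^{2n}]$, which reduces the combinatorics to an induction on $k$.

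For \eqref{1.14} I would pass to the limit $a\to 0^-$ in \eqref{1.13}. Each term with $k\ge 3$ is of order $a^{k-2}x^k\to 0$, so only $\varphi(x)=-\frac{\mu}{n+1}x^2$ survives. Since $\varphi(x)=F''(t)$ with $x=F'(t)$, this is the separable ODE $\frac{dx}{dt}=-\frac{\mu}{n+1}x^2$, whose solution is $F'(t)=\frac{n+1}{\mu(t+C)}$ and hence $F(t)=\frac{n+1}{\mu}\log|t+C|$ up to an irrelevant additive constant. Choosing $C=0$ and recalling that in a local trivialization $t=\log h(u,u)=\lambda\phi+\log|w|^2$, the Calabi ansatz $\omega_F=\sqrt{-1}\,\partial\overline{\partial}F(t)$ (with $\nu=0$) yields the stated expression $\omega_F=\frac{n+1}{\mu}\sqrt{-1}\,\partial\overline{\partial}\log\bigl|\lambda\phi+\log|w|^2\bigr|$.

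The main obstacle is the middle step: extracting the explicit closed form \eqref{1.13} from the integral. The bookkeeping is elementary but the coefficient $\frac{k-1}{n+k-1}\prod_{j=1}^{k-2}\frac{n-j}{n+j}$ only emerges after one carefully tracks how the prefactor $(1-ax)(1+ax)^{2n-1}/x^{n-1}$ interacts with the antiderivative of $u^n/[(1-au)^2(1+au)^{2n}]$, and one must check the identity uniformly over $k=3,\dots,n+1$, including the endpoint $k=n+1$ where the telescoping and the pole structure at $v=-1$ of order $2n$ conspire in a nonobvious way. Once this identity is in hand, the reductions $x_0=0$ and $a\to 0^-$ are routine calculus.
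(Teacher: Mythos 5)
Your proposal is correct and follows essentially the same route as the paper, which derives Corollary~\ref{Co:1.3} by direct substitution of $a<0$, $\lambda>0$, $r=1$, $b=1$, $\nu=0$ into Theorem~\ref{Th:1.1}(II-1): the factorization $u(1+ax_0)^2-x_0(1+au)^2=(u-x_0)(1-a^2x_0u)$, the cancellation of the powers of $\lambda$, and the identification $d=n-1$ give \eqref{1.12} exactly as you describe, and your verification strategy for the closed form \eqref{1.13} (differentiate the proposed polynomial times the reciprocal prefactor and match to the integrand) together with the separable ODE $\frac{\mathrm{d}x}{\mathrm{d}t}=-\frac{\mu}{n+1}x^2$ for the limit \eqref{1.14} is the natural completion of what the paper leaves implicit.
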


\begin{Remark}
Let $\phi=\|z\|^2$, $h=e^{\lambda\|z\|^2}|w|^2$, and $M=\{(z,w)\in\mathbb{C}^d\times\mathbb{C}:0<|w|^2<e^{-\lambda\|z\|^2}\}$. Then from \eqref{1.11} and \eqref{1.14}, $\frac{-\sqrt{-1}}{2\log h}\partial\overline{\partial}(\log h)^2$ is the K\"{a}hler form of a complete  conformally K\"ahler Einstein metric on $M$ for $\lambda=-1$, and $-\sqrt{-1}\partial\overline{\partial}\log(-\log h))$ is the K\"{a}hler form of a complete  K\"ahler Einstein metric on $M$ for $\lambda=1$.

\end{Remark}

\begin{Corollary}\label{Co:1.4}{
Under the situation of Theorem \ref{Th:1.1},  let $r=a=1,\lambda>0$ and $b=\nu=0$, then $\widetilde{g}$ is a complete Einstein  metric (namely $\mathrm{Ric}_{\widetilde{g}}=\mu \;\widetilde{g}$)  on  $M=\mathbb{P}(E \oplus 1)-M_0$ if
\begin{equation}\label{1.15}
 \left\{ \begin{array}{l}
  \mathrm{Ric}_{g_{0}}=\lambda\;g_{0},\\\\
 \varphi(x)=\frac{1}{d+1}x\left(1-(\frac{x}{x_1})^{d+1}\right),\;0<x<x_1<+\infty,\\\\
  \mu=0.
 \end{array}\right.
\end{equation}

 }\end{Corollary}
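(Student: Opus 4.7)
The plan is to obtain Corollary \ref{Co:1.4} as a direct specialization of Theorem \ref{Th:1.1} part (III-1) to the parameter choice $r=a=1$, $b=\nu=0$, and $\lambda>0$. Once I check that these values are consistent with the hypotheses of (III-1), what remains is to substitute them into the formulas for $\gamma$ and $\varphi(x)$ and perform the resulting definite integral.

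First I would verify the hypotheses of (III-1). With $a=1>0$ and $\mu=0$ the sign conditions hold. Since $b=0$, we have $x_0=-b/a=0$, while $\nu=0$ together with $\lambda>0$ gives $-\nu/\lambda=0$, so the inequality $-b/a\geq-\nu/\lambda$ holds with equality. The admissible interval is therefore $0=x_0<x<x_1<+\infty$ and the manifold is $M=\mathbb{P}(E\oplus 1)-M_0$, exactly as required in \eqref{1.15}. Substituting into the formula for $\gamma$ in (III-1) yields
\[
\gamma=\frac{a\lambda x_1+2a\nu-b\lambda}{ax_1+b}=\frac{\lambda x_1}{x_1}=\lambda,
\]
which forces $\mathrm{Ric}_{g_0}=\lambda\,g_0$ and reproduces the first line of \eqref{1.15}.

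Finally I would simplify the integral formula for $\varphi$. Under the specialization, the prefactor $-\gamma(a\lambda x+2a\nu-b\lambda)(ax+b)^{2d+1}/(\nu+\lambda x)^d$ collapses to $-\lambda^{2-d}x^{d+2}$, while the integrand $(\nu+\lambda u)^d/[(au+b)^{2d}(a\lambda u+2a\nu-b\lambda)^2]$ reduces to $\lambda^{d-2}u^{-d-2}$. After cancellation of the powers of $\lambda$,
\[
\varphi(x)=-x^{d+2}\int_{x_1}^{x}u^{-d-2}\,du=\frac{x^{d+2}}{d+1}\left(x^{-d-1}-x_1^{-d-1}\right)=\frac{x}{d+1}\left(1-\left(\frac{x}{x_1}\right)^{d+1}\right),
\]
which is the second line of \eqref{1.15}. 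The only delicate point is bookkeeping the cancellation of powers of $\lambda$ and $x$ in the prefactor and integrand; there is no genuine analytic obstacle, since completeness and the Einstein equation are already encoded in the hypothesis we are invoking from Theorem \ref{Th:1.1}(III-1).
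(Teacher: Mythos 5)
Your proposal is correct and follows exactly the route the paper intends: Corollary \ref{Co:1.4} is obtained by specializing Theorem \ref{Th:1.1}(III-1) to $r=a=1$, $b=\nu=0$, $\lambda>0$, $\mu=0$, and your checks of the sign and interval conditions ($x_0=-b/a=0=-\nu/\lambda$, so $0<x<x_1<+\infty$) as well as the evaluation $\gamma=\lambda$ and the elementary integral giving $\varphi(x)=\frac{x}{d+1}\bigl(1-(x/x_1)^{d+1}\bigr)$ are all accurate. The paper supplies no separate argument beyond this substitution, so there is nothing missing.
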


\begin{Corollary}\label{Co:1.5}{
Let $M_0$ be an  irreducible Hermitian symmetric space of compact type, i.e.
 $M_0$ is the compact dual of an irreducibly bounded symmetric domain $\Omega$ in $\mathbb{C}^d$,  $\Omega\subset\mathbb{C}^d\subset M$ and  $\mathbb{C}^d$ is dense in $M_0$. There is a  K\"ahler-Einstein metric $\omega_0$ on $M_0$, the restriction of $\omega_0$ on $\mathbb{C}^d$ can be expressed as
\begin{equation*}
  \omega_0=\sqrt{-1}\partial\overline{\partial}\log \mathrm{N}(z,-\bar{z}),
\end{equation*}
where  $\mathrm{N}(z,\overline{z})$ is the generic norm of the Cartan domain $\Omega$.

Set $\mathcal{O}(1)$ is the positive generator of the Picard group $\mathrm{Pic}(M_0)=\mathbb{Z}$. For positive integers $r>1$ and $\lambda$ satisfying a condition $r\lambda=p$, there exists a metric  $h_0$ on a holomorphic line bundle $L_0=\mathcal{O}(-\lambda)$ such that $2\pi c_1 (L_0,h_0)=-\lambda\,\omega_0$, here $p$ is  the genus of the Cartan domain $\Omega$.

Under the situation of Theorem \ref{Th:1.1},  let $a=1$, $\nu=0$ and $b<0$, then $\widetilde{g}$ is a complete Einstein  metric (namely $\mathrm{Ric}_{\widetilde{g}}=\mu \;\widetilde{g}$)  on  $M=\mathbb{P}(L_0^{\oplus r} \oplus 1)-M_0$ if
\begin{equation}\label{1.19}
 \left\{ \begin{array}{l}
  \varphi(x)= -r\frac{(x+b)^{2n-1}(x-b)}{x^{n-1}}\int_{x_1}^x \frac{ u^{n-1}}{(u+b)^{2n-2}(u-b)^2}\mathrm{d}u,\;-b<x<-\frac{(r+1)b}{(r-1)},\\\\
  \mu=0.
 \end{array}\right.
\end{equation}
 }\end{Corollary}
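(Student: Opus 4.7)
The plan is to view Corollary~\ref{Co:1.5} as a direct specialization of part (IV-1) of Theorem~\ref{Th:1.1}, where the base data are supplied by the standard invariant geometry of the compact Hermitian symmetric space $M_0$. I would first fix $M_0$ together with its K\"ahler--Einstein metric $\omega_0$ normalized as in the corollary, namely $\omega_0|_{\mathbb{C}^d}=\sqrt{-1}\,\partial\bar\partial\log N(z,-\bar z)$, and recall the classical fact that for an irreducible Hermitian symmetric space of compact type this metric is K\"ahler--Einstein with Einstein constant equal to the genus $p$ of the dual Cartan domain, so that $\mathrm{Ric}_{g_0}=p\,g_0$. Since the hypothesis $r\lambda=p$ is imposed, this automatically matches the Einstein condition $\mathrm{Ric}_{g_0}=r\lambda\,g_0$ required in the first line of \eqref{V-1.1.1}.

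Next I would produce the base Hermitian line bundle $(L_0,h_0)$. Because $\mathrm{Pic}(M_0)=\mathbb{Z}\cdot\mathcal{O}(1)$ and $K_{M_0}^{-1}\cong\mathcal{O}(p)$, for every positive integer $\lambda$ the line bundle $L_0:=\mathcal{O}(-\lambda)$ is well defined. The identification $2\pi\,c_1(\mathcal{O}(1))=(1/r)\,\omega_0$ (which follows once one knows $\mathrm{Ric}(\omega_0)=p\,\omega_0=r\lambda\,\omega_0$ together with $c_1(M_0)=p\,c_1(\mathcal{O}(1))$) provides, via the standard Kodaira construction, a Hermitian metric on $\mathcal{O}(1)$ of curvature a positive multiple of $\omega_0$; dualizing and taking the $\lambda$-th tensor power then yields $h_0$ on $L_0$ with $2\pi c_1(L_0,h_0)=-\lambda\,\omega_0$. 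This supplies precisely the input data required by Theorem~\ref{Th:1.1}.

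Finally, I would set the remaining parameters to $a=1$, $\nu=0$ and $b<0$, observe that these choices fit the constraints $a>0$, $b<0$, $\lambda>0$, $\mu=\nu=0$ of case (IV-1), and simply substitute into the general formula \eqref{V-1.1.1} to recover \eqref{1.19}; in particular the interval $(-b/a,-(r+1)b/((r-1)a))$ collapses to $(-b,-(r+1)b/(r-1))$. Completeness of $\widetilde{g}$ on $\mathbb{P}(L_0^{\oplus r}\oplus 1)-M_0$ and its smooth extension across the zero section follow immediately from the corresponding completeness assertion in Theorem~\ref{Th:1.1}(IV-1), since $M_0$ is compact.

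The essentially only substantive point to verify carefully will be the identification of the Einstein constant of $\omega_0=\sqrt{-1}\,\partial\bar\partial\log N(z,-\bar z)$ with the genus $p$, together with the compatibility of the Kodaira normalization used to construct $h_0$. All remaining steps are algebraic substitutions into Theorem~\ref{Th:1.1}, and no new analytic input is needed beyond what is already contained in that theorem.
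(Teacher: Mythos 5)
Your proposal follows exactly the route the paper intends: Corollary \ref{Co:1.5} is presented as a direct specialization of Theorem \ref{Th:1.1} (IV-1) with $a=1$, $\nu=0$, $b<0$ (the paper offers no proof beyond the remark that the corollaries follow ``for specific parameters''), so the substantive content is precisely your two checks, namely that $\mathrm{Ric}_{g_0}=p\,g_0=r\lambda\,g_0$ for the normalization $\omega_0=\sqrt{-1}\partial\overline{\partial}\log\mathrm{N}(z,-\bar z)$, and that a Hermitian metric $h_0$ with $2\pi c_1(L_0,h_0)=-\lambda\omega_0$ exists, after which \eqref{1.19} is the formula in \eqref{V-1.1.1} with $a=1$. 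One correction: from $2\pi c_1(M_0)=[\mathrm{Ric}(\omega_0)]=p[\omega_0]$ and $c_1(M_0)=p\,c_1(\mathcal{O}(1))$ you should get $2\pi c_1(\mathcal{O}(1))=[\omega_0]$, not $\tfrac{1}{r}[\omega_0]$; with the normalization as you wrote it, dualizing and taking the $\lambda$-th tensor power would produce curvature $-\tfrac{\lambda}{r}\omega_0$ rather than the required $-\lambda\omega_0$. With that factor fixed, the construction of $h_0$ and the rest of the argument go through as you describe.
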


\begin{Remark}
By Gao-Yau-Zhou \cite{Gao-Yau-Zhou-2017},  if $M$ is a compact K\"{a}hler manifold and $N$ is a subvariety with codimension greater than or equal to 2, then there are no complete K\"{a}hler-Einstein metrics on $M-N$.  Corollary \ref{Co:1.5} shows that this conclusion does not hold for the conformally K\"ahler Einstein metric.
\end{Remark}

The paper is organized as follows. In Section 2, by the momentum profiles $\varphi(x)$ (refer to Hwang-Singer \cite{Hwang-Singer-2002}) of  unitary-invariant K\"ahler metrics $g_F$, we derive ordinary differential equations for conformally K\"ahler, Einstein  metrics. In Section 3, we discuss the completeness of  conformally K\"ahler metrics. In Section 4, by using the conclusions of Section 2 and Section 3, we obtain the proof for Theorem \ref{Th:1.1}.

\section{Equations of  conformally K\"ahler, Einstein  metrics}

For convenience, we need Lemma \ref{Le:4.1} and Remark \ref{Re:2.1} of \cite{Feng-2018}, namely:
\begin{Lemma}\label{Le:4.1}{
Let $\phi$ be globally defined real K\"{a}hler potentials on a domain $\Omega\subset\mathbb{C}^d$, $F$ be a smooth real-valued function on some interval $\mathcal{I} \subset \mathbb{R}$ and
$$\Phi_F(z,w)=\nu\phi(z)+F(t),$$
where $z\in \mathbb{C}^{d}$, $t=\lambda\phi(z)+\log \rho^2$, $\rho=\|w\|,$ $w\in\mathbb{C}^{r}$, and $\lambda,\nu\in\mathbb{R}$.

Set
\begin{equation*}\label{e4.2}
Z=(z,w),\;  T\equiv(T_{i\bar{j}}):=\frac{\partial^2\Phi_F}{\partial Z^t\partial \overline{Z}}\equiv\left(
                                                                                                       \begin{array}{cc}
                                                                                                         T_{11} & T_{12}  \\
                                                                                                         T_{21} & T_{22}  \\                                                                                                     \end{array}
                                                                                                     \right)
\equiv\left(
                                                                     \begin{array}{ccc}
                                                                      \frac{\partial^2\Phi_F}{\partial z^t\partial \bar z}  & \frac{\partial^2\Phi_F}{\partial z^t\partial \bar w} \\\\
                                                                       \frac{\partial^2\Phi_F}{\partial w^t\partial \bar z} & \frac{\partial^2\Phi_F}{\partial w^t\partial \bar w} \\
                                                                     \end{array}
                                                                   \right)
\end{equation*}
and
\begin{equation*}\label{e4.3}
T^{-1}:=\left(
         \begin{array}{cc}
           (T^{-1})_{11} & (T^{-1})_{12} \\\\
           (T^{-1})_{21} & (T^{-1})_{22} \\
         \end{array}
       \right).
\end{equation*}
Then
\begin{equation*}\label{e4.4}
  T_{11}=(\nu+\lambda F')\frac{\partial^2\phi}{\partial z^t\partial \bar z}+\lambda^2F'' \frac{\partial\phi}{\partial z^t}\frac{\partial\phi}{\partial \bar{z}},\;\; T_{12}=\lambda\frac{F''}{\rho^2}\frac{\partial\phi}{\partial z^t}w,
\end{equation*}
\begin{equation*}\label{e4.6}
  T_{21}=\lambda\frac{F''}{\rho^2}\overline{w}^t\frac{\partial\phi}{\partial \bar{z}},\;\;  T_{22}=\frac{F'}{\rho^2}I_{r}+\frac{F''-F'}{\rho^4}\overline{w}^tw,
\end{equation*}
\begin{equation*}\label{e4.8}
   \det T=\frac{1}{\rho^{2r}}(F')^{r-1}F''(\nu+\lambda F')^{d}\det(\frac{\partial^2\phi}{\partial z^t\partial \bar z}).
\end{equation*}
\begin{equation*}\label{e4.9.1}
  (T^{-1})_{11}=\frac{1}{\nu+\lambda F'}\left(\frac{\partial^2\phi}{\partial z^t\partial \bar z}\right)^{-1},
\end{equation*}
\begin{equation*}\label{e4.10}
  (T^{-1})_{12}=-\frac{\lambda}{\nu+\lambda F'}\left(\frac{\partial^2\phi}{\partial z^t\partial \bar z}\right)^{-1}\frac{\partial\phi}{\partial z^t}w,
\end{equation*}
\begin{equation*}\label{e4.11}
  (T^{-1})_{21}=-\frac{\lambda}{\nu+\lambda F'}\overline{w}^t\frac{\partial\phi}{\partial \bar{z}}\left(\frac{\partial^2\phi}{\partial z^t\partial \bar z}\right)^{-1}
\end{equation*}
and
\begin{eqnarray*}
\label{e4.12}    (T^{-1})_{22}   & =& \frac{\rho^2}{F'}I_{r}+\left(\frac{1}{F''}-\frac{1}{F'}\right)\overline{w}^tw+\frac{\lambda^2}{\nu+\lambda F'}\frac{\partial\phi}{\partial \bar{z}}\left(\frac{\partial^2\phi}{\partial z^t\partial \bar z}\right)^{-1}\frac{\partial\phi}{\partial z^t}\overline{w}^tw.
\end{eqnarray*}
Where $Z^t$ and $\overline{Z}$ denote the transpose and the conjugate of the row vector $Z=(z,w)$, respectively, $I_{r}$ denotes the identity matrix of order $r$, and symbols
$$
\frac{\partial}{\partial z^t}=\left(\frac{\partial}{\partial z_1},\cdots,\frac{\partial}{\partial z_d}\right)^t,\;
\frac{\partial}{\partial \bar{z}}=\left(\frac{\partial}{\partial \bar{z_1}},\cdots,\frac{\partial}{\partial \bar{z_d}}\right),\;
\frac{\partial^2}{\partial z^t\partial \bar z}=\left(\frac{\partial^2}{\partial z_i\partial \bar z_j}\right).$$
 }\end{Lemma}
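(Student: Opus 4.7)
The proof is a direct chain-rule computation combined with standard block-matrix identities; I would organize it in three stages.

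For the Hessian entries, I first compute $\partial t/\partial z_i = \lambda\,\partial\phi/\partial z_i$ and $\partial t/\partial w_\alpha = \bar w_\alpha/\rho^2$. Applying the chain rule to $\Phi_F = \nu\phi + F(t)$ and differentiating twice, each second derivative splits into an $F''$-piece carrying the gradient factors $\lambda\phi_z$ or $w/\rho^2$ from both sides, and a lower-order piece hitting either $\phi_{z\bar z}$ or the Hessian of $\log\rho^2$ (which contributes $\delta_{\alpha\beta}/\rho^2 - \bar w_\alpha w_\beta/\rho^4$). Collecting these immediately gives the stated formulas for $T_{11}$, $T_{12}$, $T_{21}$, $T_{22}$.

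For the determinant I would use the Schur complement with respect to the $(2,2)$ block, $\det T = \det T_{22} \cdot \det(T_{11} - T_{12}T_{22}^{-1}T_{21})$. Since $T_{22}$ is a rank-one perturbation of $(F'/\rho^2)I_r$, the Sherman--Morrison formula gives $T_{22}^{-1} = (\rho^2/F')I_r + ((F'')^{-1}-(F')^{-1})\bar w^t w$, and Sylvester's identity yields $\det T_{22} = (F')^{r-1}F''/\rho^{2r}$. The key cancellation is that $T_{12}T_{22}^{-1}T_{21}$ collapses to $\lambda^2 F'' \phi_{z^t}\phi_{\bar z}$ (after using the scalar contraction $w\bar w^t = \rho^2$), which exactly kills the rank-one piece of $T_{11}$, leaving the Schur complement $S = (\nu+\lambda F')\phi_{z\bar z}$. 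The product $\det T_{22}\cdot\det S$ then yields the claimed determinant formula.

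The main obstacle is the inverse. Once $S^{-1} = (\nu+\lambda F')^{-1}(\phi_{z\bar z})^{-1}$ is identified as $(T^{-1})_{11}$, the off-diagonal entries $(T^{-1})_{12} = -S^{-1}T_{12}T_{22}^{-1}$ and its conjugate reduce after a short simplification to the stated forms, because the $T_{22}^{-1}$ factor applied to $\bar w^t \phi_{\bar z}$ on the right produces only the multiplier $\rho^2/F''$ (the rank-one correction again telescoping through $w\bar w^t = \rho^2$). The delicate step is $(T^{-1})_{22} = T_{22}^{-1} + T_{22}^{-1}T_{21}S^{-1}T_{12}T_{22}^{-1}$: the first summand supplies the $(\rho^2/F')I_r + ((F'')^{-1}-(F')^{-1})\bar w^t w$ portion, while the second summand, after telescoping the scalar $w \bar w^t = \rho^2$ through the successive products, produces the composite term $\lambda^2(\nu+\lambda F')^{-1}\phi_{\bar z}(\phi_{z\bar z})^{-1}\phi_{z^t}\bar w^t w$. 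Careful index bookkeeping in this last calculation is what makes the formula close, and once it is tracked, all four block formulas for $T^{-1}$ are established.
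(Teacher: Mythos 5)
Your proposal is correct. Note that the paper does not actually prove this lemma — it is quoted verbatim from the author's earlier work \cite{Feng-2018} — so there is no in-paper argument to compare against; but your route (chain rule for the Hessian blocks, Schur complement with respect to $T_{22}$, Sherman--Morrison for $T_{22}^{-1}$, and the repeated telescoping of the scalar $w\overline{w}^t=\rho^2$) is the standard one, and every step you describe checks out: the cross term $T_{12}T_{22}^{-1}T_{21}$ does collapse to $\lambda^2F''\,\frac{\partial\phi}{\partial z^t}\frac{\partial\phi}{\partial\bar z}$, leaving the Schur complement $(\nu+\lambda F')\frac{\partial^2\phi}{\partial z^t\partial\bar z}$, from which all four inverse blocks and the determinant follow exactly as stated.
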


\begin{Remark}\rm\label{Re:2.1}
Under assumptions of Lemma \ref{Le:4.1}, $\Phi_F$ is a K\"{a}hler potential on a domain $M=\{(z,w)\in\Omega\times \mathbb{C}^{r}: t\in \mathcal{I}\}$
if and only if

$\mathrm{(i)}$ $\nu+\lambda F'(t)>0,\;F''(t)>0$ for $r=1$;

$\mathrm{(ii)}$ $\nu+\lambda F'(t)>0,\;F'(t)>0,\;F''(t)>0$ for $r>1$.
\end{Remark}

\begin{Lemma}\label{Le:4.2}{Under the situation of Lemma \ref{Le:4.1}, if a Hermitian matrix $T$ is positive definite, $f$ is a function in $t$ and $\overline{\partial}\left(\frac{\partial f}{\partial \overline{Z}}T^{-1}\right)=0,$ then there exist constants $a,b$ such that $ f=aF'(t)+b.$

}\end{Lemma}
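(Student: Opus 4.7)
The plan is to reduce the holomorphicity condition $\overline{\partial}\!\left(\frac{\partial f}{\partial \overline{Z}}T^{-1}\right)=0$ to a simple ODE for $f$ by direct computation using the explicit block formulas for $T^{-1}$ provided by Lemma \ref{Le:4.1}. Since $f=f(t)$ with $t=\lambda\phi(z)+\log\rho^{2}$, the chain rule gives
\begin{equation*}
\tfrac{\partial f}{\partial \bar z}=f'(t)\,\lambda\,\tfrac{\partial\phi}{\partial\bar z},\qquad \tfrac{\partial f}{\partial \bar w}=f'(t)\,\tfrac{w}{\rho^{2}},
\end{equation*}
so that $\frac{\partial f}{\partial\overline{Z}}$ has exactly the same ``Calabi ansatz'' shape as the rows of $T$. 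This is the structural reason one should expect dramatic cancellation upon multiplication by $T^{-1}$.

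Next I would split $v:=\frac{\partial f}{\partial\overline{Z}}T^{-1}$ into its $z$-block $v_{1}$ and its $w$-block $v_{2}$ and substitute the four formulas for $(T^{-1})_{ij}$ from Lemma \ref{Le:4.1}. The key identities driving the calculation are the scalar contractions $w\bar w^{t}=\rho^{2}$ (which collapses the rank-one corrections involving $\bar w^{t}w$) and the abbreviation $A:=\frac{\partial\phi}{\partial\bar z}\bigl(\frac{\partial^{2}\phi}{\partial z^{t}\partial\bar z}\bigr)^{-1}\frac{\partial\phi}{\partial z^{t}}$. For $v_{1}$, the two contributions combine into a factor $\bigl[\tfrac{\partial\phi}{\partial\bar z}-\tfrac{w\bar w^{t}}{\rho^{2}}\tfrac{\partial\phi}{\partial\bar z}\bigr]\bigl(\tfrac{\partial^{2}\phi}{\partial z^{t}\partial\bar z}\bigr)^{-1}$, which vanishes identically; so $v_{1}\equiv0$. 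For $v_{2}$, the $A$-terms from $(T^{-1})_{12}$ and from the last summand of $(T^{-1})_{22}$ cancel one another, and the remaining $\frac{1}{F'}$-pieces telescope, leaving the compact form
\begin{equation*}
v_{2}=\frac{f'(t)}{F''(t)}\,w.
\end{equation*}

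Finally I would exploit $\overline{\partial}v=0$. Since $v_{1}\equiv0$ trivially satisfies this, the condition reduces to: each component $G(t)w_{k}$, with $G(t):=f'(t)/F''(t)$, is holomorphic in $(z,w)$. Differentiating $\partial_{\bar z_{j}}(G(t)w_{k})=G'(t)\,\lambda\,\partial_{\bar z_{j}}\phi\cdot w_{k}=0$ (or, equivalently, $\partial_{\bar w_{\ell}}(G(t)w_{k})=G'(t)\tfrac{w_{\ell}}{\rho^{2}}w_{k}=0$) and using the nondegeneracy of $\phi$ as a K\"ahler potential together with $\lambda\neq0$ and $w\neq0$, I conclude $G'(t)\equiv0$. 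Hence $G(t)=a$ for a constant $a$, i.e.\ $f'(t)=aF''(t)$, which integrates to $f(t)=aF'(t)+b$ as claimed.

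I do not expect a serious obstacle: the entire argument is algebraic manipulation of the four explicit blocks of $T^{-1}$, and the mild care required is bookkeeping with row/column conventions so that scalars like $w\bar w^{t}/\rho^{2}$ and $A$ are correctly identified where cancellation occurs.
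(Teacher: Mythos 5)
Your proposal is correct and follows essentially the same route as the paper: both compute $\frac{\partial f}{\partial\overline{Z}}T^{-1}=\left(0,\tfrac{f'(t)}{F''(t)}w\right)$ using the explicit blocks of $T^{-1}$ from Lemma \ref{Le:4.1}, and then deduce from $\overline{\partial}$-closedness and $F''>0$ that $f'(t)/F''(t)$ (the paper's $\tfrac{\mathrm{d}f}{\mathrm{d}x}$ with $x=F'(t)$) is constant, whence $f=aF'(t)+b$. The only difference is notational — the paper works with $f$ as a function of $x=F'(t)$ while you work directly in $t$ — and your fallback to differentiating in $\bar w_{\ell}$ rather than $\bar z_{j}$ is the safer choice, since $\partial_{\bar z}\phi$ may vanish at isolated points.
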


\begin{proof}[Proof]Let $x=F'(t)$ and $\varphi(x)=F''(t)$. By Lemma \ref{Le:4.1}, we have
\begin{equation*}
 \frac{\partial f}{\partial \overline{Z}}=\frac{\mathrm{d} f}{\mathrm{d} x}\varphi(x)\left(\lambda\frac{\partial\phi}{\partial \overline{z}},\frac{w}{\rho^2}\right)
\end{equation*}
and
\begin{equation*}
   \frac{\partial f}{\partial \overline{Z}}T^{-1}=\left(0,\frac{\mathrm{d} f}{\mathrm{d} x}w\right).
\end{equation*}
Thus
\begin{equation*}
   \overline{\partial}\left(\frac{\partial f}{\partial \overline{Z}}T^{-1}\right)=\left(0,\overline{\partial}\left(\frac{\mathrm{d} f}{\mathrm{d} x}\right)w\right)=0.
\end{equation*}
That is $ \overline{\partial}\left(\frac{\mathrm{d}f}{\mathrm{d}x}\right)=0.$

Since $\overline{\partial}\left(\frac{\mathrm{d}f}{\mathrm{d}x}\right)=\frac{\mathrm{d}^2f}{\mathrm{d} x^2}\varphi(x)\overline{\partial}t$ and $\varphi(x)>0$, so $\frac{\mathrm{d}^2 f}{\mathrm{d} x^2}=0.$ Then there exist constants $a,b$ such that $f=ax+b.$

\end{proof}

By Lemma \ref{Le:4.2}, if $g_F$ is a K\"{a}hler metric associated with the K\"{a}hler form $\omega_F=\sqrt{-1}\partial\overline{\partial}\Phi_F$, and $\widetilde{g}=\frac{1}{f^2}g_F$ is a cKEM metric, then $f$ must be forms $f=aF'(t)+b$.

\begin{Theorem}\label{Th:4.1}{Let $g_{\phi}$ be a  K\"{a}hler  metric  on a domain $\Omega\subset\mathbb{C}^d$ associated with the K\"{a}hler form $\omega_{\phi}=\sqrt{-1}\partial\overline{\partial}\phi$, namely $g_{\phi}(X,Y)=\omega_{\phi}(X,JY)$,  where $\phi$ is globally defined  the K\"{a}hler potential on $\Omega$, and $J$ is the canonical complex structure on the complex Euclidean space. Suppose that $F$ is a smooth real-valued function on some interval $\mathcal{I} \subset \mathbb{R}$ such that $$\Phi_F(z,w):=\nu\phi(z)+F(t)\;(t=\lambda\phi(z)+\log\|w\|^2,\;\nu=0,1,\;\lambda\neq 0)$$ is a K\"{a}hler potential on a domain $M=\{(z,w)\in\Omega\times \mathbb{C}^{r}: t\in \mathcal{I}\}$. Set $g_F$ is a K\"{a}hler  metric  on the domain $M$ associated with the K\"{a}hler form $\omega_F=\sqrt{-1}\partial\overline{\partial}\Phi_F$.

Let  $\widetilde{g}=\frac{1}{f^2}g_F$, $n=d+r, x=F'(t), \varphi(x)=F''(t)$ and $f=ax+b\neq 0$ on $M$.

$(\mathrm{I})$ If  the scalar curvature $s_{\widetilde{g}}$ is constant on $M$ for the metric $\widetilde{g}=\frac{1}{f^2}g_F$, then  the scalar curvature $s_{g_{\phi}}$ is constant on $\Omega$ for the metric $g_{\phi}$ and
 \begin{equation}\label{eq7.1}
    \left(\frac{(\nu+\lambda x)^dx^{r-1}}{f^{2n-1}}\varphi\right)''=\frac{(\nu+\lambda x)^dx^{r-1}}{f^{2n-1}}\left(\frac{1}{\nu+\lambda x}k_{g_{\phi}}+\frac{r(r-1)}{x}- \frac{k_{\widetilde{g}}}{f^2}\right),
 \end{equation}
where $k_{g_{\phi}}=\frac{1}{2}s_{g_{\phi}}$ and $k_{\widetilde{g}}=\frac{1}{2}s_{\widetilde{g}}$

$(\mathrm{II})$   $\widetilde{g}=\frac{1}{f^2}g_F$ is an Einstein metric on $M$ ( that is $\mathrm{Ric}_{\widetilde{g}}=\mu\; \widetilde{g}$) if and only if
\begin{equation}\label{eq7.3.4}
  \left\{\begin{array}{l}
     \mathrm{Ric}_{g_{\phi}}=\gamma\; g_{g_{\phi}},\\\\
            \varphi'(x)+\left(\frac{d\lambda }{\lambda x+\nu}-\frac{(2d+1)a}{ax+b}-\frac{\lambda a}{a\lambda x+2a\nu-b\lambda}\right)\varphi(x)
   -\frac{\mu(\nu+\lambda x)-\gamma (ax+b)^2}{(ax+b)(a\lambda x+2a\nu-b\lambda)}=0, \\\\
            r=1
         \end{array}
  \right.
\end{equation}
for $\nu a\neq 0$, and
\begin{equation}\label{eq7.3.5}
  \left\{\begin{array}{l}
         \mathrm{Ric}_{g_{\phi}}=\gamma\; g_{g_{\phi}},\\\\
             \varphi'(x)+\left(\frac{d\lambda }{\lambda x+\nu}+\frac{r-1}{x}-\frac{(2n-1)a}{ax+b}-\frac{\lambda a}{a\lambda x+2a\nu-b\lambda}\right)\varphi(x)
   -\frac{\mu(\nu+\lambda x)-\gamma (ax+b)^2}{(ax+b)(a\lambda x+2a\nu-b\lambda)}=0
         \end{array}
  \right.
\end{equation}
for $\nu a=0$. Where $\gamma$ is a constant,  and $\gamma=\frac{\mu\nu}{f^2}+r\lambda=\frac{\mu\nu}{b^2}+r\lambda$ for $r>1$ and $\nu a=0$.
 }\end{Theorem}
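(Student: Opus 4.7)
The plan is to compute $\mathrm{Ric}_{\widetilde{g}}$ explicitly from Lemma \ref{Le:4.1} and the conformal-change-of-Ricci formula, use Lemma \ref{Le:4.2} (whose hypotheses follow automatically when $\widetilde{g}$ is Einstein or cKEM) to reduce the conformal factor to $f=ax+b$, and then separate the resulting Einstein equation into a base piece forcing $\mathrm{Ric}_{g_\phi}=\gamma g_\phi$ and a fiber piece giving the claimed ODE for $\varphi(x)$.

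First, I would compute $\mathrm{Ric}_{g_F}=-\sqrt{-1}\partial\bar\partial\log\det T$. From Lemma \ref{Le:4.1},
\begin{equation*}
\log\det T = -2r\log\rho + (r-1)\log F'(t) + \log F''(t) + d\log(\nu+\lambda F'(t)) + \log\det\left(\frac{\partial^2\phi}{\partial z^t\partial\bar z}\right),
\end{equation*}
so $\mathrm{Ric}_{g_F}$ decomposes into the pullback of $\mathrm{Ric}_{g_\phi}$ on the base plus $\partial\bar\partial$-derivatives of functions of $t$ alone; these latter contributions can be re-expressed, via $x=F'(t)$ and $\varphi(x)=F''(t)$, as explicit multiples of $\omega_\phi$, of $\partial\phi\otimes\bar\partial\phi$, and of the bundle-direction forms encoded in the block structure of $T$. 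For the conformal piece, in real dimension $2n$ with $\widetilde{g}=f^{-2}g_F$, one has
\begin{equation*}
\mathrm{Ric}_{\widetilde{g}} = \mathrm{Ric}_{g_F} + (2n-2)\frac{\nabla^2 f}{f} + \left(\frac{\Delta_{g_F}f}{f} - (2n-1)\frac{|\nabla f|^2_{g_F}}{f^2}\right)g_F,
\end{equation*}
and because $f$ is a function of $t$ only, $\nabla^2 f$, $\Delta_{g_F}f$ and $|\nabla f|^2_{g_F}$ can be read off directly from the blocks of $T^{-1}$ in Lemma \ref{Le:4.1}.

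Equating $\mathrm{Ric}_{\widetilde{g}}=\mu\widetilde{g}$ and expanding in the $2\times 2$ block decomposition of Lemma \ref{Le:4.1}, I would organize the $(z,\bar z)$-block into a tensor identity of the form $A(x)\,\mathrm{Ric}_{g_\phi} + B(x)\,\omega_\phi + C(x)\,\partial\phi\otimes\bar\partial\phi=0$ with explicit $A,B,C$. Requiring this identity to hold for a single profile $F$ over a varying base point forces $\mathrm{Ric}_{g_\phi}=\gamma g_\phi$ and also kills the $\partial\phi\otimes\bar\partial\phi$ term; the surviving scalar equation from the $(w,\bar w)$-block then becomes a first-order linear ODE for $\varphi(x)$ whose integrating factor is $(\nu+\lambda x)^{-d}x^{1-r}(ax+b)^{2n-1}(a\lambda x+2a\nu-b\lambda)$, giving precisely \eqref{eq7.3.4} when $\nu a\neq 0$ and \eqref{eq7.3.5} when $\nu a=0$. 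Part (I) follows by the same computation with $\mu$ replaced by the appropriate multiple of $s_{\widetilde{g}}$ through tracing against $\widetilde{g}$, and rearranging the result into the divergence form \eqref{eq7.1}.

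The main obstacle will be the separation step: the coefficient of $\mathrm{Ric}_{g_\phi}$ and the coefficient of $\omega_\phi$ in the $(z,\bar z)$-block must balance consistently for every $t$, and this compatibility is what forces the restriction $r=1$ when $\nu a\neq 0$. Concretely, the $\frac{r-1}{x}$ contribution from $(r-1)\log F'$ in $\log\det T$ produces a base-tensor term proportional to $\omega_\phi$ that can only be absorbed when either $\nu=0$ (so that $\nu+\lambda x$ no longer couples the base curvature to the fiber) or $r=1$ (so that the offending term vanishes). Tracking these algebraic compatibilities, together with the resulting value $\gamma=\mu\nu/b^2+r\lambda$ in the case $\nu a=0$ with $r>1$, is the delicate part of the argument; the remaining work is a direct if lengthy expansion.
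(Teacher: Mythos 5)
Your overall strategy is the same as the paper's: compute $\mathrm{Ric}_{g_F}$ from $\log\det T$ via Lemma \ref{Le:4.1}, apply the conformal change formula for Ricci, expand $\mathrm{Ric}_{\widetilde{g}}=\mu\widetilde{g}$ in the $2\times2$ block decomposition, and separate a base condition $\mathrm{Ric}_{g_\phi}=\gamma g_\phi$ from a fiber ODE. However, two of the steps you describe would not go through as stated, and they are exactly the delicate parts.

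First, the $(w,\bar w)$-block does not directly yield a first-order ODE for $\varphi$. That block reads $\frac{A}{\rho^2}I_r+\frac{D}{\rho^4}\bar w^tw=0$, where $D$ contains $Q''$ and hence $\varphi''$; for $r=1$ it gives the single second-order equation $A+D=0$ (equation \eqref{eq7.2}), while for $r>1$ the matrix has the two distinct eigenvalues $A/\rho^2$ and $(A+D)/\rho^2$, so it gives the two equations $A=0$ and $A+D=0$. The first-order ODE appearing in \eqref{eq7.3.4} and \eqref{eq7.3.5} does not come from this block at all: it comes from requiring that the coefficient $\gamma$ multiplying $\omega_\phi$ in the $(z,\bar z)$-block be constant. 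To obtain the stated "if and only if" with only the first-order equation, one must additionally prove that the first-order equation (constancy of $\gamma$) implies the second-order one ($C\equiv A+D=0$); the paper does this by an explicit integrating-factor manipulation (the computation relating $R$, $S$, $Q_1-P_1$ in part (II-2), and a direct differentiation of the $\gamma$-identity in part (II-3)). Your sketch omits this reduction entirely.

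Second, your proposed mechanism for forcing $r=1$ when $\nu a\neq 0$ — that the $\frac{r-1}{x}$ term in $\log\det T$ produces an unabsorbable multiple of $\omega_\phi$ in the base block — is not correct and would not produce the conclusion. The base block has the same structure for all $r$; what changes at $r>1$ is the appearance of the \emph{extra} fiber equation $A=0$. The actual argument combines $A=0$ with the constancy of $\gamma=-\frac{2a\nu}{f}\bigl(\varphi'+(\frac{p'}{p}-\frac{(2n-1)a}{f})\varphi-\frac{\mu}{2af}\bigr)+\lambda r+\lambda A$: when $\nu a\neq 0$ these two first-order relations can be solved algebraically, forcing $\varphi$ to be the explicit quadratic \eqref{eq2.24}, and substituting back yields a polynomial identity such as $a\lambda(1-n)x^2+((r+1-2n)a\nu+(n-1)b\lambda)x+(r-1)b\nu\equiv0$, which is impossible. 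Without this over-determination argument the case $r>1$, $\nu a\neq 0$ is not excluded. (Minor additional points: the integrating factor you quote is the reciprocal of the correct one, and Part (I) is not obtained by tracing the Einstein computation — it is a separate, weaker hypothesis handled directly with the conformal transformation of scalar curvature.)
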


\begin{proof}[Proof]

$(\mathrm{I})$

Set
\begin{equation*}
   \rho=\|w\|,\;Z=(z,w),\;  T\equiv(T_{i\bar{j}}):=\frac{\partial^2\Phi_F}{\partial Z^t\partial \overline{Z}}\equiv\left(
                                                                                                       \begin{array}{cc}
                                                                                                         T_{11} & T_{12}  \\
                                                                                                         T_{21} & T_{22}  \\                                                                                                     \end{array}
                                                                                                     \right)
\equiv\left(
                                                                     \begin{array}{ccc}
                                                                      \frac{\partial^2\Phi_F}{\partial z^t\partial \bar z}  & \frac{\partial^2\Phi_F}{\partial z^t\partial \bar w} \\\\
                                                                       \frac{\partial^2\Phi_F}{\partial w^t\partial \bar z} & \frac{\partial^2\Phi_F}{\partial w^t\partial \bar w} \\
                                                                     \end{array}
                                                                   \right)
\end{equation*}
and
\begin{equation*}
\begin{array}{c}
  T^{-1}:=\left(
         \begin{array}{cc}
           (T^{-1})_{11} & (T^{-1})_{12} \\\\
           (T^{-1})_{21} & (T^{-1})_{22} \\
         \end{array}
       \right).
\end{array}
\end{equation*}

Let $k_{\widetilde{g}}=\frac{1}{2}s_{\widetilde{g}}$ and $k_{g_F}=\frac{1}{2}s_{g_F}$, then
\begin{equation*}
  k_{\widetilde{g}} =f^2k_{g_F}+2(2n-1)f\triangle_{g_F} f-n(2n-1)|\mathrm{d}f|_{g_F}^2=f^2k_{g_F}-\frac{2(2n-1)}{n-1}f^{n+1}\triangle_{g_F} f^{-n+1},
\end{equation*}
 where
\begin{equation*}
  \begin{array}{c}
     \triangle_{g_F}=\mathrm{Tr}\left(T^{-1}\frac{\partial^2}{\partial Z^t\partial \bar{Z}}\right),\;k_{g_F}=-\mathrm{Tr}\left(T^{-1}\frac{\partial^2\log\det T}{\partial Z^t\partial \bar{Z}}\right),
  \end{array}
\end{equation*}
for the expression of $T$, see Lemma \ref{Le:4.1}.

Since
\begin{equation*}
   \frac{\partial t}{\partial Z}=\left(\lambda\frac{\partial \phi}{\partial z},\frac{\overline{w}}{\rho^2}\right),\;
  \frac{\partial t}{\partial \overline{Z}}=\left(\lambda\frac{\partial \phi}{\partial \overline{z}},\frac{w}{\rho^2}\right),
  \frac{\partial^2t}{\partial Z^t\partial\overline{Z}}=\left(
                                                         \begin{array}{cc}
                                                           \lambda\frac{\partial^2\phi}{\partial z^t\partial \overline{z}} & 0 \\
                                                           0 & \frac{1}{\rho^2}I_{r}-\frac{\overline{w}^tw}{\rho^4} \\
                                                         \end{array}
                                                       \right)
\end{equation*}
and
\begin{equation*}
    \frac{\partial^2f^{\alpha}}{\partial Z^t\partial \overline{Z}}=((f^{\alpha})'\varphi)'\varphi\frac{\partial t}{\partial Z^t}\frac{\partial t}{\partial \overline{Z}}
  +(f^{\alpha})'\varphi \frac{\partial^2t}{\partial Z^t\partial\overline{Z}},
\end{equation*}
by Lemma \ref{Le:4.1}, it follows that
\begin{eqnarray*}
   & &  \mathrm{Tr}\left(T^{-1}\frac{\partial t}{\partial Z^t}\frac{\partial t}{\partial \overline{Z}}\right)=\frac{\partial t}{\partial \overline{Z}}T^{-1}\frac{\partial t}{\partial Z^t}  \\   
   &=&\left(\lambda\frac{\partial \phi}{\partial \overline{z}},\frac{w}{\rho^2}\right)\left(
         \begin{array}{cc}
           (T^{-1})_{11} & (T^{-1})_{12} \\\\
           (T^{-1})_{21} & (T^{-1})_{22} \\
         \end{array}
       \right)\left(
                \begin{array}{c}
                  \lambda\frac{\partial\phi}{\partial z^t} \\
                  \frac{\overline{w}^t}{\rho^2} \\
                \end{array}
              \right)  \\
   &=& \lambda^2\frac{\partial\phi}{\partial \overline{z}}(T^{-1})_{11} \frac{\partial\phi}{\partial z^t}+\lambda\frac{\partial\phi}{\partial \overline{z}}(T^{-1})_{12}\frac{\overline{w}^t}{\rho^2}+\lambda\frac{w}{\rho^2}(T^{-1})_{21}\frac{\partial\phi}{\partial z^t}+\frac{1}{\rho^4}w(T^{-1})_{22}\overline{w}^t  \\
   &=& \frac{\lambda^2}{\nu+\lambda F'}(1-1-1+1)\frac{\partial\phi}{\partial \bar{z}}\left(\frac{\partial^2\phi}{\partial z^t\partial \bar z}\right)^{-1}\frac{\partial\phi}{\partial z^t}+\frac{1}{F'}+(\frac{1}{F''}-\frac{1}{F'})\\
   &=&\frac{1}{F''}=\frac{1}{\varphi},
\end{eqnarray*}
\begin{eqnarray*}
   && \mathrm{Tr}\left(T^{-1}\frac{\partial^2 t}{\partial Z^t\partial \overline{Z}}\right) \\
   &=& \lambda \mathrm{Tr}\left((T^{-1})_{11}\frac{\partial^2 \phi}{\partial z^t\partial \overline{z}}\right) +\mathrm{Tr}\left((T^{-1})_{22}\left(\frac{1}{\rho^2}I_{r}-\frac{\overline{w}^tw}{\rho^4}\right)\right) \\
   &=& \frac{\lambda d}{\nu+\lambda F'}+\frac{1}{F'}\mathrm{Tr}\left(I_{r}-\frac{\overline{w}^tw}{\rho^2}  \right) \\
   &=&\frac{\lambda d}{\nu+\lambda F'}+\frac{r-1}{F'} =\frac{\lambda d}{\nu+\lambda x}+\frac{r-1}{x},   
\end{eqnarray*}

\begin{eqnarray*}
   & &  \triangle_{g_F} f^{\alpha} =\mathrm{Tr}\left(T^{-1}\frac{\partial^2 f^{\alpha}}{\partial Z^t\partial \overline{Z}}\right)   \\  
   &=& ((f^{\alpha})'\varphi)'\varphi\mathrm{Tr}\left(T^{-1}\frac{\partial t}{\partial Z^t}\frac{\partial t}{\partial \overline{Z}}\right)
  +(f^{\alpha})'\varphi \mathrm{Tr}\left(T^{-1}\frac{\partial^2 t}{\partial Z^t\partial \overline{Z}}\right) \\
   &=& \frac{((f^{\alpha})'\varphi)'\varphi}{\varphi}+(f^{\alpha})'\varphi\left(\frac{\lambda d}{\nu+\lambda x}+\frac{r-1}{x} \right)\\
   &=& ((f^{\alpha})'\varphi)'+\left(\frac{\lambda d}{\nu+\lambda x}+\frac{r-1}{x}\right)(f^{\alpha})'\varphi
\end{eqnarray*}
and
\begin{equation*}
    f^{-\alpha}\triangle_{g_F} f^{\alpha}=\frac{(f^{\alpha})'}{f^{\alpha}}\varphi'+\left(\frac{(f^{\alpha})''}{f^{\alpha}}+\frac{(f^{\alpha})'}{f^{\alpha}}\frac{p'}{p}\right)\varphi,
\end{equation*}
where $p:=(\nu +\lambda x)^dx^{r-1}$.

So by (2.24) of \cite{Feng-2018}, we have
\begin{eqnarray*}
  &  \frac{k_{\widetilde{g}}}{f^2}&= k_{g_F}+\frac{2(2n-1)}{\alpha}f^{-\alpha}\triangle_{g_F} f^{\alpha}  \\
   && =\left(\frac{1}{\nu+\lambda x}k_{g_{\phi}}+\frac{r(r-1)}{x}-\frac{\left(p\varphi\right)''}{p}\right) +\frac{2(2n-1)}{\alpha}\left(\frac{(f^{\alpha})'}{f^{\alpha}}\varphi'+\Big(\frac{(f^{\alpha})''}{f^{\alpha}}+\frac{(f^{\alpha})'}{f^{\alpha}}\frac{p'}{p}\Big)\varphi \right) \\
   &&=\frac{1}{\nu+\lambda x}k_{g_{\phi}}+\frac{r(r-1)}{x}-\left(\varphi''+2\Big(\frac{p'}{p}-\frac{(2n-1)}{\alpha}\frac{(f^{\alpha})'}{f^{\alpha}}\Big)\varphi'\right.\\
   &&\;\;\; \left.+\Big(\frac{p''}{p}-\frac{2(2n-1)}{\alpha}\Big(\frac{(f^{\alpha})''}{f^{\alpha}}+\frac{(f^{\alpha})'}{f^{\alpha}}\frac{p'}{p}\Big)\Big)\varphi\right),
\end{eqnarray*}
here $\alpha=1-n$. Thus, we get
\begin{eqnarray*}
   & & \varphi''+2\left(\frac{p'}{p}-\frac{(2n-1)}{\alpha}\frac{(f^{\alpha})'}{f^{\alpha}}\right)\varphi'
   +\left(\frac{p''}{p}-\frac{2(2n-1)}{\alpha}\Big(\frac{(f^{\alpha})''}{f^{\alpha}}+\frac{(f^{\alpha})'}{f^{\alpha}}\frac{p'}{p}\Big)\right)\varphi   \\
   &=& \frac{1}{\nu+\lambda x}k_{g_{\phi}}+\frac{r(r-1)}{x}- \frac{k_{\widetilde{g}}}{f^2}.
\end{eqnarray*}
Using
\begin{equation*}
    \frac{(f^{\alpha})''}{f^{\alpha}}+\left(\frac{2n-1}{\alpha}+1\right)\left(\frac{(f^{\alpha})'}{f^{\alpha}}\right)^2=0,
\end{equation*}
we have \eqref{eq7.1}.

$(\mathrm{II})$ We complete the proof in four steps.

$(\mathrm{II-1})$ In this part, we prove the following conclusions:

 $\widetilde{g}=\frac{1}{f^2}g_F$ is an Einstein metric on $M$ ( that is $\mathrm{Ric}_{\widetilde{g}}=\mu \;\widetilde{g}$) if and only if
\begin{equation}\label{eq7.2.1}
\mathrm{Ric}_{g_{\phi}}=\gamma\; g_{g_{\phi}},
\end{equation}
\begin{equation}\label{eq7.2.1.2}
  \varphi'+\left(\frac{p'}{p}-\frac{(2n-1)a}{f}+\frac{a\lambda}{\lambda f-2a(\nu+\lambda x)}\right)\varphi+\frac{\mu(\nu+\lambda x)-\gamma f^2}{f(\lambda f-2a(\nu+\lambda x))}=0
\end{equation}
and
\begin{equation}\label{eq7.2}
 \varphi''(x)+\left(\frac{p'}{p}-\frac{2an}{f}\right)\varphi'+\left(\frac{2(2n-1)a^2}{f^2}-\frac{2ap'}{pf}+\left(\frac{p'}{p}\right)'\right)\varphi+\frac{\mu}{f^2}=0,
\end{equation}
where $p=(\nu+\lambda x)^dx^{r-1}$, and $\gamma$ is a constant. For $r>1$, $\varphi(x)$ is also satisfied
\begin{equation}\label{eq7.3}
\left(1-\frac{2ax}{f}\right)\varphi'+\left(\frac{p'}{p}-\frac{2a(n-1)}{f}-\frac{2axp'}{pf}+\frac{2(2n-1)a^2x}{f^2}\right)\varphi+\frac{\mu x}{f^2}-r=0.
\end{equation}

Using (see e.g. \cite{Besse}, 1.161)
\begin{equation*}
    \mathrm{Ric}_{\widetilde{g}}=\mathrm{Ric}_{g_F}+\frac{2(n-1)}{f}D_{g_F}^2f-\frac{1}{f^2}(-2f\triangle_{g_F} f+(2n-1)|\mathrm{d}f|_{g_F}^2)g_F,
\end{equation*}
here $D_{g_F}$ denotes the Levi-Civita connection of $g_F$, we get
\begin{equation*}
    \mathrm{Ric}_{g_F}+\frac{2(n-1)}{f}D_{g_F}^2f=\frac{1}{f^2}(\mu-2f\triangle_{g_F} f+(2n-1)|\mathrm{d}f|_{g_F}^2)g_F,
\end{equation*}
namely
\begin{equation*}
     \frac{2(n-1)}{f}\sqrt{-1}\partial\overline{\partial}f=
   \frac{1}{f^2}(\mu-2f\triangle_{g_F} f+(2n-1)|\mathrm{d}f|_{g_F}^2)\sqrt{-1}\partial\overline{\partial}\Phi_F+\sqrt{-1}\partial\overline{\partial}\log\det(\partial\overline{\partial}\Phi_F)
\end{equation*}
or
\begin{equation}\label{eq7.5}
     \frac{2(n-1)}{f} \frac{\partial^2f}{\partial Z^t\partial\overline{Z}}=
   \frac{1}{f^2}(\mu-2f\triangle_{g_F} f+(2n-1)|\mathrm{d}f|_{g_F}^2)
   \frac{\partial^2\Phi_F}{\partial Z^t\partial\overline{Z}}+\frac{\partial^2\log\det (\partial\overline{\partial}\Phi_F)}{\partial Z^t\partial\overline{Z}}.
\end{equation}

Let
\begin{eqnarray*}
 Q  &=& (r-1)\log F'(t)+\log F''(t)+d\log(\nu+\lambda F'(t))-rt, \\\\
   P  &=& \log\det(\partial\overline{\partial}\Phi_F)=Q+\log\det(\partial\overline{\partial}\phi)+\lambda r\phi, \\\\
  Ric_{g_{\phi}}   &=&  -\frac{\partial^2}{\partial z^t\partial\bar{z}}\log\left(\det\left(\frac{\partial^2\phi}{\partial z^t\partial \bar z}\right)\right).
\end{eqnarray*}
Note that
\begin{eqnarray*}
   Q'(t)    & =&r-1)\frac{F''(t)}{F'(t)}+\frac{F'''(t)}{F''(t)}+d\frac{\lambda F''(t)}{\nu+\lambda F'(t)}-r \\
     &=&\varphi'(x)+\left(\frac{r-1}{x}++\frac{\lambda d}{\nu+\lambda x}\right)\varphi(x)-r  \\
     & = &\frac{(p\varphi)'}{p}(x)-r
\end{eqnarray*}
and
\begin{equation*}
  Q''(t)\;\;=\;\;\left( \frac{(p\varphi)'}{p}\right)'(x)\cdot\varphi(x),
\end{equation*}
so
\begin{eqnarray*}
   & &  \frac{\partial^2\log\det (\partial\overline{\partial}\Phi_F)}{\partial Z^t\partial\overline{Z}}\\
   &=&  Q''\frac{\partial t}{\partial Z^t}\frac{\partial t}{\partial \overline{Z}}+Q'\frac{\partial^2t}{\partial Z^t\partial \overline{Z}}
 +\left(
                             \begin{array}{cc}
                              \frac{\partial^2\log\det(\partial\overline{\partial}\phi)}{\partial z^t\partial\overline{z}}+\lambda r\frac{\partial^2\phi}{\partial z^t\partial\overline{z}}  & 0 \\
                               0 & 0 \\
                             \end{array}
                           \right) \\
   &=& Q''\left(
           \begin{array}{cc}
             \lambda^2\frac{\partial\phi}{\partial z^t}\frac{\partial\phi}{\partial \overline{z}} & \frac{\lambda}{\rho^2}\frac{\partial\phi}{\partial z^t}w \\\\
             \frac{\lambda}{\rho^2}\overline{w}^t\frac{\partial\phi}{\partial\overline{z}} & \frac{1}{\rho^4}\overline{w}^tw \\
           \end{array}
         \right)+Q'\left(
                                                         \begin{array}{cc}
                                                           \lambda\frac{\partial^2\phi}{\partial z^t\partial \overline{z}} & 0 \\
                                                           0 & \frac{1}{\rho^2}I_{r}-\frac{\overline{w}^tw}{\rho^4} \\
                                                         \end{array}
                                                       \right)
                                                       +\left(
                             \begin{array}{cc}
                              \frac{\partial^2\log\det(\partial\overline{\partial}\phi)}{\partial z^t\partial\overline{z}}
                              +\lambda r\frac{\partial^2\phi}{\partial z^t\partial\overline{z}}  & 0 \\
                               0 & 0 \\
                             \end{array}
                           \right) \\
   &=&  \left(
         \begin{array}{cc}
         \lambda(Q'+r)\frac{\partial^2\phi}{\partial z^t\partial\overline{z}}
         + \frac{\partial^2\log\det(\partial\overline{\partial}\phi)}{\partial z^t\partial\overline{z}}
         +\lambda^2Q''\frac{\partial\phi}{\partial z^t}\frac{\partial\phi}{\partial \overline{z}}  & \frac{\lambda Q''}{\rho^2}\frac{\partial\phi}{\partial z^t}w \\\\
         \frac{\lambda Q''}{\rho^2}\overline{w}^t\frac{\partial\phi}{\partial\overline{z}}   &\frac{Q'}{\rho^2}I_{r}+\frac{Q''-Q'}{\rho^4}\overline{w}^tw \\
         \end{array}
       \right),
\end{eqnarray*}
\begin{equation*}
    \frac{\partial f}{\partial Z^t}=f'\varphi\left(\lambda\frac{\partial \phi}{\partial z},\frac{\overline{w}}{\rho^2}\right)^t,\;
  \frac{\partial f}{\partial \overline{Z}}=f'\varphi\left(\lambda\frac{\partial \phi}{\partial \overline{z}},\frac{w}{\rho^2}\right),
\end{equation*}
\begin{equation*}
  \frac{\partial^2f}{\partial Z^t\partial\overline{Z}}=(f'\varphi)'\varphi \frac{\partial t}{\partial Z^t} \frac{\partial t}{\partial \overline{Z}}
 +f'\varphi\frac{\partial^2 t}{\partial Z^t\partial \overline{Z}}=a\varphi\left(
           \begin{array}{cc}
            \lambda\frac{\partial^2\phi}{\partial z^t\partial \overline{z}}+ \lambda^2\varphi'\frac{\partial\phi}{\partial z^t}\frac{\partial\phi}{\partial \overline{z}} & \frac{\lambda \varphi'}{\rho^2}\frac{\partial\phi}{\partial z^t}w \\\\
             \frac{\lambda \varphi'}{\rho^2}\overline{w}^t\frac{\partial\phi}{\partial\overline{z}} &\frac{1}{\rho^2}I_{r}+ \frac{\varphi'-1}{\rho^4}\overline{w}^tw \\
           \end{array}
         \right),
\end{equation*}
\begin{equation*}
     |\mathrm{d}f|_{g_F}^2 =2\mathrm{Tr}\left(T^{-1}\frac{\partial f}{\partial Z^t}\frac{\partial f}{\partial \overline{Z}}\right)
     =2a^2\varphi^2 \mathrm{Tr}\left(T^{-1}\frac{\partial t}{\partial Z^t}\frac{\partial t}{\partial \overline{Z}}\right)
     =2a^2\varphi
\end{equation*}
and
\begin{eqnarray*}
  \triangle_{g_F} f   &=& \mathrm{Tr}\left(T^{-1}\frac{\partial^2f}{\partial Z^t\partial\overline{Z}}\right)=a\varphi'\varphi\mathrm{Tr}\left(T^{-1}\frac{\partial t}{\partial Z^t}\frac{\partial t}{\partial \overline{Z}}\right)
  +a\varphi \mathrm{Tr}\left(T^{-1}\frac{\partial^2 t}{\partial Z^t\partial \overline{Z}}\right) \\
   &=& a\varphi'+a\left(\frac{\lambda d}{\nu+\lambda x}+\frac{r-1}{x}\right)\varphi=a\frac{(p\varphi)'}{p}.
\end{eqnarray*}

Let
\begin{eqnarray*}
  A &:=& x\frac{\mu-2f\triangle_{g_F} f+(2n-1)|\mathrm{d}f|_{g_F}^2}{f^2}+Q'-2a(n-1)\frac{\varphi}{f}  \\
   &=& x\left(\frac{\mu}{f^2}+\frac{-2a(p\varphi)'}{pf}+\frac{2(2n-1)a^2\varphi}{f^2}\right)+\frac{(p\varphi)'}{p}-r-2a(n-1)\frac{\varphi}{f}  \\
   &=& (1-\frac{2ax}{f})\varphi'+\left(\frac{p'}{p}-\frac{2a(n-1)}{f}-\frac{2axp'}{pf}+\frac{2(2n-1)a^2x}{f^2}\right)\varphi+\frac{\mu x}{f^2}-r,
\end{eqnarray*}
\begin{eqnarray*}
  D &:=& (\varphi- x)\frac{\mu-2f\triangle_{g_F} f+(2n-1)|\mathrm{d}f|_{g_F}^2}{f^2}+Q''-Q'-2a(n-1)\frac{(\varphi'-1)\varphi}{f},
\end{eqnarray*}
\begin{eqnarray*}
   C &:=&\varphi\frac{\mu-2f\triangle_{g_F} f+(2n-1)|\mathrm{d}f|_{g_F}^2}{f^2}+Q''-2a(n-1)\frac{\varphi'\varphi}{f}  \\
   &=& \left(\frac{\mu}{f^2}+\frac{-2a\left(\varphi'+\frac{p'}{p}\varphi\right)}{f}+\frac{2(2n-1)a^2\varphi}{f^2}
   +\varphi''+\frac{p'}{p}\varphi'+\left(\frac{p'}{p}\right)'\varphi-2a(n-1)\frac{\varphi'}{f}\right)\varphi \\
   &=& \left(\varphi''(x)+\Big(\frac{p'}{p}-\frac{2an}{f}\Big)\varphi'+\Big(\frac{2(2n-1)a^2}{f^2}-\frac{2ap'}{pf}+\Big(\frac{p'}{p}\Big)'\Big)\varphi+\frac{\mu}{f^2}\right) \varphi
\end{eqnarray*}
and
\begin{eqnarray}
\nonumber \gamma   &:=& \frac{\mu-2f\triangle_{g_F} f+(2n-1)|\mathrm{d}f|_{g_F}^2}{f^2}(\nu+\lambda x)+\lambda(Q'+r)-\frac{2(n-1)a\lambda}{f}\varphi  \\
\nonumber   &=& \frac{\mu-2f\triangle_{g_F} f+(2n-1)|\mathrm{d}f|_{g_F}^2}{f^2}\nu+\lambda r+\lambda A \\
\nonumber   &=& \nu\left(-\frac{2a}{f}\varphi'+\Big(\frac{2(2n-1)a^2}{f^2}-\frac{2ap'}{pf}\Big)\varphi+\frac{\mu}{f^2}\right)+\lambda r+\lambda A \\
\label{e7.8.2.1}   &=&-\frac{2a\nu}{f}\left(\varphi'+\Big(\frac{p'}{p}-\frac{(2n-1)a}{f}\Big)\varphi-\frac{\mu}{2af}\right)+\lambda r+\lambda A  \\
\label{e7.8.2}   &=& \frac{\lambda f-2a(\nu+\lambda x)}{f}\left(\varphi'+\Big(\frac{p'}{p}-\frac{(2n-1)a}{f}\Big)\varphi\right)+\frac{\lambda a}{f}\varphi+\frac{\mu(\nu+\lambda x)}{f^2}.
\end{eqnarray}
Note that the equation \eqref{e7.8.2} is equivalent to the equation \eqref{eq7.2.1.2}.

The equation \eqref{eq7.5}  is equivalent to the following system of equations
\begin{eqnarray}
 \label{eq7.6}    &&  \frac{A}{\rho^2}I_{r}+\frac{D}{\rho^4}\overline{w}^tw=0, \\
\label{eq7.6.1}   &&  C\frac{\lambda}{\rho^2}\frac{\partial\phi}{\partial z^t}w=0, \\
 \label{eq7.6.2}  &&   \gamma\frac{\partial^2 \phi}{\partial z^t\partial\overline{z}}+\frac{\partial^2\log\det(\partial\overline{\partial} \phi)}{\partial z^t\partial\overline{z}}
+C\lambda^2\frac{\partial\phi}{\partial z^t}\frac{\partial\phi}{\partial \overline{z}}=0.
\end{eqnarray}

Using \eqref{eq7.6.1}, we get
\begin{equation*}
     C\frac{\lambda}{\rho^2}\frac{\partial\phi}{\partial z^t}w\overline{w}^t=C\lambda\frac{\partial\phi}{\partial z^t}=0,
\end{equation*}
 namely $ C\frac{\partial\phi}{\partial z^t}=0.$
Then
\begin{equation*}
    \lambda\frac{\mathrm{d}C}{\mathrm{d}x}\varphi \frac{\partial\phi}{\partial z^t}\frac{\partial\phi}{\partial \overline{z}}+C\frac{\partial^2\phi}{\partial z^t\partial \overline{z}}=0,
\end{equation*}
which implies $C^2\frac{\partial^2\phi}{\partial z^t\partial \overline{z}}=0.$
Since $ \frac{\partial^2\phi}{\partial z^t\partial \overline{z}}>0,$ it follows that $  C\equiv A+D=0.$

Using \eqref{eq7.6} and
\begin{equation*}
    \det\left(\beta I_{r}-\frac{A}{\rho^2}I_{r}-\frac{D}{\rho^4}\overline{z}^tz\right)=\left(\beta-\frac{A}{\rho^2}\right)^{r-1}\left(\beta-\frac{A+D}{\rho^2}\right),
\end{equation*}
we have $  A+D=0$ for $r=1$, and $ A=0,\;A+D=0$ for $r>1$. Thus we have \eqref{eq7.2} and \eqref{eq7.3}.

Substituting $C=0$ into \eqref{eq7.6.2}, we obtain
\begin{equation*}
   \gamma\frac{\partial^2 \phi}{\partial z^t\partial\overline{z}}+\frac{\partial^2\log\det(\partial\overline{\partial} \phi)}{\partial z^t\partial\overline{z}}=0.
\end{equation*}

For the case of $d=1$, taking the partial derivative  with respect to the variable $w$ for the above equation, by $\frac{\partial^2 \phi}{\partial z^t\partial\overline{z}}>0$, we have $\frac{\mathrm{d}\gamma}{\mathrm{d}x}\varphi\frac{\overline{w}^t}{\rho^2}=0, $ thus $\frac{\mathrm{d}\gamma}{\mathrm{d}x}=0.$
 This indicates that $\gamma$ is a constant and $g_{\phi}$ is a K\"ahler-Einstein  metric. That is, equation \eqref{eq7.2.1} holds.

For the case of $d>1$, we have that $\gamma$ is a constant and $g_{\phi}$ is a K\"ahler-Einstein  metric.

$(\mathrm{II-2})$ In this part, we prove the following conclusions:
 when $\nu a=0$,   $\mathrm{Ric}_{\widetilde{g}}=\mu \;\widetilde{g}$ if and only if
\begin{equation}\label{eq7.2.2}
\mathrm{Ric}_{g_{\phi}}=\gamma \;g_{g_{\phi}}
\end{equation}
and
\begin{equation}\label{eq7.3.1}
(1-\frac{2ax}{f})\varphi'+\left(\frac{p'}{p}-\frac{2a(n-1)}{f}-\frac{2axp'}{pf}+\frac{2(2n-1)a^2x}{f^2}\right)\varphi+\frac{\mu x}{f^2}=\left\{\begin{array}{l}
                                                                                                                                                     c,\;r=1, \\
                                                                                                                                                     r,\;r>1.
                                                                                                                                                   \end{array}
\right.
\end{equation}
Where $c$ and
\begin{equation}\label{eq7.3.2}
 \gamma=\frac{\mu\nu}{f^2}+\lambda\times\left\{\begin{array}{l}
                                                                                                                                                     c,\;r=1, \\
                                                                                                                                                     r,\;r>1
                                                                                                                                                   \end{array}
\right.
\end{equation}
are   constants.

When $\nu a=0$, from \eqref{e7.8.2.1} we get
\begin{equation*}
  \gamma=\frac{\mu\nu}{f^2}+\lambda r+\lambda A.
\end{equation*}
By $\gamma$ and $\frac{\mu\nu}{f^2}$ are constants, then $A$ is a constant. Namely
\begin{equation}\label{eq7.3-1}
A\equiv (1-\frac{2ax}{f})\varphi'+\left(\frac{p'}{p}-\frac{2a(n-1)}{f}-\frac{2axp'}{pf}+\frac{2(2n-1)a^2x}{f^2}\right)\varphi+\frac{\mu x}{f^2}-r=\widetilde{c}.
\end{equation}

Now, we show that if $\nu a=0$ and $A$ is constant, then the equation \eqref{eq7.2} holds.

 Let
\begin{equation*}
   Q_1 :=\frac{\frac{p'}{p}-\frac{2a(n-1)}{f}-\frac{2axp'}{pf}+\frac{2(2n-1)a^2x}{f^2}}{1-\frac{2ax}{f}} =-\frac{a}{ax-b}+\frac{r-1}{x}+\frac{d\lambda}{\lambda x+\nu}-\frac{a(2n-1)}{ax+b},
\end{equation*}
\begin{equation*}
   Q_2: =\frac{\frac{\mu x}{f^2}-r}{1-\frac{2ax}{f}}=r+\frac{4abr-\mu}{2a(ax-b)}-\frac{\mu}{2a(ax+b)},
\end{equation*}
\begin{equation*}
  P_1  : =\frac{p'}{p}-\frac{2an}{f}=\frac{d\lambda}{\lambda x+\nu}+\frac{r-1}{x}-\frac{2an}{ax+b},
\end{equation*}
\begin{equation*}
   P_2 :=\frac{2(2n-1)a^2}{f^2}-\frac{2ap'}{pf}+\left(\frac{p'}{p}\right)',
\end{equation*}
\begin{equation*}
   P_3  :=\frac{\mu}{f^2},
\end{equation*}
\begin{equation*}
   R : =\varphi''+P_1\varphi'+P_2\varphi+P_3,
\end{equation*}
\begin{equation*}
  S :=\varphi'+Q_1\varphi+Q_2.
\end{equation*}
Then from
\begin{equation*}
  S= \varphi'+Q_1\varphi+Q_2,
\end{equation*}
we get
\begin{equation*}
   S'=\varphi''+Q_1\varphi'+Q_1'\varphi+Q_2'.
\end{equation*}
Thus
\begin{equation*}
    S'-R=(Q_1-P_1)\left(\varphi'+\frac{Q_1'-P_2}{Q_1-P_1}\varphi+\frac{Q_2'-P_3}{Q_1-P_1}\right),
\end{equation*}
where
\begin{equation*}
   Q_1-P_1=-\frac{2ab}{a^2x^2-b^2},\;\frac{Q_1'-P_2}{Q_1-P_1}=Q_1+\frac{d\nu a}{b(\nu+\lambda x)},\;\frac{Q_2'-P_3}{Q_1-P_1}=Q_2.
\end{equation*}
This shows that
\begin{equation*}
    R= S'-(Q_1-P_1)S+\frac{2d\nu a^2}{(a^2x^2-b^2)(\nu+\lambda x)}\varphi.
\end{equation*}
Therefore, if $\nu a=0$ and $S=\frac{\widetilde{c}}{1-\frac{2ax}{f}}$, then $R=0$. That is,  the equation \eqref{eq7.3-1} implies the equation \eqref{eq7.2}.

It is easy to verify that \eqref{eq7.3.1} is equivalent to \eqref{eq7.2.1.2} for $\nu a=0$.

Let $c=r+\widetilde{c}$, by \eqref{eq7.2.1}, \eqref{eq7.2.1.2} and \eqref{eq7.2}, we obtain \eqref{eq7.2.2}, \eqref{eq7.3.1} and \eqref{eq7.3.2}.

$(\mathrm{II-3})$ In this part, we prove the following conclusions:
when $\nu a\neq 0$,  $\mathrm{Ric}_{\widetilde{g}}=\mu \;\widetilde{g}$ if and only if  $r=1$,
\begin{equation}\label{eq7.2.3}
 \mathrm{Ric}_{g_{\phi}}=\gamma \;g_{g_{\phi}}
\end{equation}
and
\begin{equation}\label{eq7.3.3}
  \varphi'+\left(\frac{p'}{p}-\frac{(2n-1)a}{f}+\frac{\lambda a}{\lambda f-2a(\nu+\lambda x)}\right)\varphi+\frac{\mu(\nu+\lambda x)-\gamma f^2}{f(\lambda f-2a(\nu+\lambda x))}=0,
\end{equation}
where $\gamma$ is a constant on $M$.

If $\nu a\neq 0$ and $r>1$, then by \eqref{eq7.3} (that is $A=0$) and
\begin{eqnarray*}
  \gamma  &=&-\frac{2a\nu}{f}\left(\varphi'+\Big(\frac{p'}{p}-\frac{(2n-1)a}{f}\Big)\varphi-\frac{\mu}{2af}\right)+\lambda r+\lambda A  \\
   &=& -\frac{2a\nu}{f}\left(\varphi'+\Big(\frac{p'}{p}-\frac{(2n-1)a}{f}\Big)\varphi-\frac{\mu}{2af}\right)+\lambda r
\end{eqnarray*}
is a constant, we have
\begin{eqnarray}
\label{eq2.22}   && \left(1-\frac{2ax}{f}\right)\left(\varphi'+\Big(\frac{p'}{p}-\frac{(2n-1)a}{f}\Big)\varphi\right)+\frac{a}{f}\varphi+\frac{\mu x}{f^2}-r=0,  \\
\label{eq2.23}   &&   \varphi'+\left(\frac{p'}{p}-\frac{(2n-1)a}{f}\right)\varphi=\frac{\mu}{2af}+\frac{(r\lambda-\gamma)f}{2a\nu}.
\end{eqnarray}
Substituting \eqref{eq2.23} into \eqref{eq2.22}, we have
\begin{equation}\label{eq2.24}
 \varphi=\frac{\lambda r-\gamma}{2\nu}x^2+rx+\frac{\gamma-\lambda r}{2a^2\nu}b^2+\frac{br}{a}-\frac{\mu}{2a^2}.
\end{equation}
Substituting \eqref{eq2.24} into \eqref{eq2.23}, we have
\begin{equation*}
  a^2(r\lambda-\gamma)x^2+2ra^2\nu x+2rab\nu-b^2r\lambda+b^2\gamma-\mu\nu\equiv 0
\end{equation*}
or
\begin{equation*}
  a\lambda(1-n)x^2+((r+1-2n)a\nu+(n-1)b\lambda)x+(r-1)b\nu\equiv 0.
\end{equation*}
This is impossible.

Now let $r=1$ and $n=d+1$, it follows from \eqref{e7.8.2} that
\begin{equation*}
  \gamma=\left(-\lambda+\frac{2(b\lambda-a\nu)}{ax+b}\right)\varphi'(x)
+\left(\frac{2a(2d+1)(a\nu-b\lambda)}{(ax+b)^2}+\frac{2\lambda a}{ax+b}+\frac{\lambda^2d}{\lambda x+\nu}\right)\varphi(x)+\frac{\mu(\lambda x+\nu)}{(ax+b)^2}.
\end{equation*}
Since $\gamma$ is a constant on $M$,  taking the derivative of the above equation with respect to $x$, it follows that
\begin{eqnarray*}
   &&  -\frac{(\lambda x+2\nu)a-b\lambda}{ax+b}\left\{\varphi''(x)+\left(\frac{\lambda d}{\lambda x+\nu}-\frac{2a(d+1)}{ax+b}\right)\varphi'(x) +\frac{\mu}{(ax+b)^2}\right. \\
   &&  \left.+ \left(\frac{2(2d+1)a^2}{(ax+b)^2}-\frac{2ad\lambda}{(\nu+\lambda x)(ax+b)}-\frac{d\lambda^2}{(\nu+\lambda x)^2}\right)\varphi(x)
 \right\}=0,
\end{eqnarray*}
so
\begin{eqnarray*}
   &&\varphi''(x)+\left(\frac{\lambda d}{\lambda x+\nu}-\frac{2a(d+1)}{ax+b}\right)\varphi'(x) +\frac{\mu}{(ax+b)^2} \\
   && +\left(\frac{2(2d+1)a^2}{(ax+b)^2}-\frac{2ad\lambda}{(\nu+\lambda x)(ax+b)}-\frac{d\lambda^2}{(\nu+\lambda x)^2}\right)\varphi(x)=0.
\end{eqnarray*}
Using
\begin{equation*}
 \frac{p'}{p}-\frac{2an}{f}=\frac{\lambda d}{\lambda x+\nu}-\frac{2a(d+1)}{ax+b},\;\frac{\mu}{f^2}=\frac{\mu}{(ax+b)^2},
\end{equation*}
\begin{equation*}
 \frac{2(2n-1)a^2}{f^2}-\frac{2ap'}{pf}+\left(\frac{p'}{p}\right)'= \frac{2(2d+1)a^2}{(ax+b)^2}-\frac{2ad\lambda}{(\nu+\lambda x)(ax+b)}-\frac{d\lambda^2}{(\nu+\lambda x)^2},
\end{equation*}
it follows from $\gamma$ is constant on $M$ that \eqref{eq7.2.1.2} implies \eqref{eq7.2}.

$(\mathrm{II-4})$ Finally, note that \eqref{eq7.3.1} equivalent to \eqref{eq7.2.1.2} for $\nu a=0$, from \eqref{eq7.3.1} and \eqref{eq7.3.3},  $\widetilde{g}=\frac{1}{f^2}g_F$ is an Einstein metric on $M$ (that is $\mathrm{Ric}_{\widetilde{g}}=\mu \;\widetilde{g}$) if and only if equations \eqref{eq7.3.4} and \eqref{eq7.3.5} hold.

\end{proof}

\begin{Remark}
For $r=1$, under the situation of Theorem \ref{Th:4.1}, let $u=\nu+\lambda x$, $\theta=\frac{1}{2\lambda^2\varphi(x)}\mathrm{d}^cu$. By
\begin{equation*}
  \omega_F=\frac{1}{2}\mathrm{d}\mathrm{d}^c(\nu\phi(z)+F(t))=\nu\omega_{\phi}+\frac{1}{2}\Big(F''(t)\mathrm{d}t\wedge\mathrm{d}^ct+F'(t)\mathrm{d}\mathrm{d}^ct\Big),
\end{equation*}
we have
\begin{eqnarray*}
  \omega_F &=&\nu\omega_{\phi}+\frac{1}{2}\Big(\varphi(x)\frac{\mathrm{d}x}{\varphi(x)}\wedge\frac{\mathrm{d}^cx}{\varphi(x)}+\lambda x\mathrm{d}\mathrm{d}^c\phi\Big)  \\
   &=& (\nu+\lambda x)\omega_{\phi}+\frac{1}{2\varphi(x)}\mathrm{d}x\wedge\mathrm{d}^cx \\
   &=&u\omega_{\phi}+\frac{1}{2\lambda^2\varphi(x)}\mathrm{d}u\wedge\mathrm{d}^cu= u\omega_{\phi}+\mathrm{d}u\wedge\theta.
\end{eqnarray*}
From \cite{Apostolov-Calderbank-Gauduchon-2006}, if $\widetilde{g}$ is a conformally Einstein with conformal factor $f$, then
\begin{equation}\label{eq2.25}
  \left\{\begin{array}{l}
           \mathrm{Ric}_{g_{\phi}}=\gamma\; g_{g_{\phi}},\\\\
           G''(u)-\left(\frac{2(n-1)q}{qu+p}+\frac{n}{u}\right)G'(u)+\frac{2n(n-1)q}{z(qu+p)}G(u)+2\gamma u^{n-2}=0,
         \end{array}
  \right.
\end{equation}
where
\begin{equation*}
  G(u)=2\lambda^2u^{n-1}\varphi(x),ax+b=qz+p, q=\frac{a}{\lambda}, p=\frac{b\lambda-a\nu}{\lambda}.
\end{equation*}

Using the variable $u$ and the function $G$,  \eqref{eq7.3.4} is reduced to
\begin{equation}\label{eq2.26}
  \left\{\begin{array}{l}
           \mathrm{Ric}_{g_{\phi}}=\gamma\; g_{g_{\phi}},\\\\
           G'(u)-\left(\frac{(2n-1)q}{qu+p}+\frac{q}{qu-p}\right)G(u)-\frac{2\mu u^n-2\gamma u^{n-1}(qu+p)^2}{(qu+p)(qu-p)}=0.
         \end{array}
  \right.
\end{equation}

The following proves that if $G$ is a solution of \eqref{eq2.26}, then $G$ must be a solution of \eqref{eq2.25}. 

Let $H(u)=\frac{G(u)}{(qu+p)^{2n-1}(qu-p)}$. Equations \eqref{eq2.25} and \eqref{eq2.26} are equivalent to
 \begin{equation}\label{eq2.27}
   \frac{\mathrm{d}}{\mathrm{d}u}\frac{(qu-p)^2(qu+p)^{2n}H'(u)}{u^n}+2\gamma\frac{(qu+p)(qu-p)}{u^2}=0
 \end{equation}
 and
 \begin{equation}\label{eq2.28}
   H'(u)-\frac{2\mu u^n-2\gamma u^{n-1}(qu+p)^2}{(qu+p)^{2n}(qu-p)^2}=0.
 \end{equation}
 respectively. It is easy to see that the solution  $H$ of the equation  \eqref{eq2.28} must satisfies the equation \eqref{eq2.27}.
\end{Remark}

\begin{Remark}
For $\nu=0,a\neq 0, r>1$, under the situation of Theorem \ref{Th:4.1}, let $\omega_{FS}$ be the Fubini-Study metric on $\mathbb{CP}^{r-1}$, $u=x$, $\theta=\frac{1}{2\varphi(x)}\mathrm{d}^cu$ and $\omega_0=\lambda\omega_{\phi}+\omega_{FS}$. By
\begin{equation*}
  \mathrm{d}t=\frac{\mathrm{d}x}{\varphi(x)}=\frac{\mathrm{d}u}{\varphi(u)},\mathrm{d}\mathrm{d}^ct=\mathrm{d}\mathrm{d}^c(\lambda\phi+\log\|w\|^2)
  =2\lambda\omega_{\phi}+2\omega_{FS}
\end{equation*}
and
\begin{equation*}
  \omega_F=\frac{1}{2}\mathrm{d}\mathrm{d}^c(\nu\phi(z)+F(t))=\nu\omega_{\phi}+\frac{1}{2}\Big(F''(t)\mathrm{d}t\wedge\mathrm{d}^ct+F'(t)\mathrm{d}\mathrm{d}^ct\Big),
\end{equation*}
we get
\begin{eqnarray*}
  \omega_F &=&\nu\omega_{\phi}+\frac{1}{2}\Big(\varphi(x)\frac{\mathrm{d}x}{\varphi(x)}\wedge\frac{\mathrm{d}^cx}{\varphi(x)}+ x(2\lambda\omega_{\phi}+2\omega_{FS})\Big)  \\
   &=& (\nu+\lambda x)\omega_{\phi}+x\omega_{FS}+\frac{1}{2\varphi(x)}\mathrm{d}x\wedge\mathrm{d}^cx \\
   &=& u\omega_{0}+\mathrm{d}u\wedge\theta.
\end{eqnarray*}

From \cite{Apostolov-Calderbank-Gauduchon-2006}, if $\widetilde{g}$ is a conformally Einstein with conformal factor $f=ax+b$, then
\begin{equation}\label{eq2.27.1}
  \left\{\begin{array}{l}
           \rho_{0}=c\, \omega_{0},\\\\
           G''(u)-\left(\frac{2(n-1)q}{qu+p}+\frac{n}{u}\right)G'(u)+\frac{2n(n-1)q}{z(qu+p)}G(u)+2c u^{n-2}=0,
         \end{array}
  \right.
\end{equation}
where
\begin{equation*}
  G(u)=2u^{n-1}\varphi(x),ax+b=qz+p, q=a, p=b.
\end{equation*}
Since $\rho_0=\rho_{\phi}+\rho_{FS}=\rho_{\phi}+r\omega_{FS}$, it follows from $\rho_{0}=c\, \omega_{0}$ that
\begin{equation*}
  c=r,\;\rho_{\phi}=\gamma\,\omega_{\phi}=r\lambda\,\omega_{\phi}.
\end{equation*}

Using the variable $u$,  \eqref{eq7.3.5} is reduced to
\begin{equation*}
  \left\{\begin{array}{l}
         \mathrm{Ric}_{g_{\phi}}=r\lambda\, g_{g_{\phi}},\\\\
             \varphi'(u)+\left(\frac{n-1}{u}-\frac{(2n-1)q}{qu+p}-\frac{ q}{qu-p}\right)\varphi(u)
   -\frac{\mu u-r (qu+p)^2}{(qu+p)(qu-p)}=0.
         \end{array}
  \right.
\end{equation*}
That is
\begin{equation*}
  \frac{\mathrm{d}}{\mathrm{d}u}\frac{2u^{n-1}\varphi(u)}{(qu+p)^{2n-1}(qu-p)}=\frac{2\mu u^n-2r u^{n-1}(qu+p)^{2}}{(qu+p)^{2n}(qu-p)^2}
\end{equation*}
or
\begin{equation*}
  \frac{\mathrm{d}}{\mathrm{d}u}\Big(2u^{n-1}\varphi(u)\Big)-\left(\frac{(2n-1)q}{qu+p}+\frac{q}{qu-p}\right)\Big(2u^{n-1}\varphi(u)\Big)-\frac{2\mu u^n-2r u^{n-1}(qu+p)^2}{(qu+p)(qu-p)}=0.
\end{equation*}
Which implies that $2u^{n-1}\varphi(u) $ is a solution of \eqref{eq2.27.1}.
\end{Remark}

\setcounter{equation}{0}
\section{ The Completeness of conformally K\"ahler  metrics}

\begin{Lemma}\label{Le:8.1}{ Under the situation of Theorem \ref{Th:1.1}, for a given point $z_0\in M_0$, let
\begin{equation*}
  M_{z_0}=\{u\in M:u\in\Pi^{-1}(z_0)\},\;N_0=\{u\in M:h(u,u)=0\}.
\end{equation*}
then $M_{z_0}$  is totally geodesic with respect to $\widetilde{g}$. If $\nu=1$ and $N_0\neq \emptyset$, then  $N_0$ is also  a totally geodesic submanifold of $M$ with respect to the metric $\widetilde{g}$.

 }\end{Lemma}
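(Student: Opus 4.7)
The plan is to reduce the claim to two independent facts: (a) the gradient $\nabla f$ of the conformal factor has no normal component along the submanifold, and (b) the submanifold is totally geodesic with respect to the K\"ahler metric $g_F$. Under the conformal change $\widetilde g=f^{-2}g_F$ the second fundamental form of a submanifold $N\subset M$ transforms as
$$\widetilde{II}_N(X,Y)=II_N(X,Y)+\frac{g_F(X,Y)}{f}\,(\nabla f)^\perp,\qquad X,Y\in TN,$$
so (a) and (b) together imply total geodesicity of $N$ with respect to $\widetilde g$.

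The observation (a) is essentially contained in the proof of Lemma~\ref{Le:4.2}. Since $f=aF'(t)+b$ depends only on $t$, that computation gives $\tfrac{\partial f}{\partial\overline Z}\,T^{-1}=(0,\tfrac{df}{dx}\,w)$, whence $\mathrm{grad}^{1,0}f=a\sum_\alpha w^\alpha\partial_{w^\alpha}$. Consequently $\nabla f$ is tangent to every fiber $\Pi^{-1}(z_0)$, handling the $M_{z_0}$ case of (a); and $\nabla f$ vanishes identically on the zero section $\{w=0\}$, handling the $N_0$ case.

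For (b) I would compute the relevant Christoffel symbols at an arbitrary point $u_0$ of the submanifold in a well-chosen local trivialization. Fixing $u_0=(z_0,w_0)$, choose the local K\"ahler potential $\phi$ of $g_0$ so that $d\phi(z_0)=0$ (always possible by adding a pluriharmonic function); Lemma~\ref{Le:4.1} then forces $T_{12}(z_0)=T_{21}(z_0)=0$, so $T^{-1}$ is block-diagonal at $u_0$. For $M_{z_0}$, total geodesicity reduces to the vanishing at $u_0$ of $\Gamma^k_{\alpha\beta}=g^{k\bar l}\partial_{w^\alpha}g_{\beta\bar l}$ with $k$ a base index and $\alpha,\beta$ fiber indices: the fiber-index contribution vanishes because $(T^{-1})_{12}(u_0)=0$, while the base-index contribution vanishes because the explicit formula for $T_{21}$ in Lemma~\ref{Le:4.1} makes $\partial_{w^\alpha}(T_{21})_{\beta\bar l}\big|_{z_0}$ proportional to $\partial_{\bar z^l}\phi(z_0)=0$. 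For the zero section $N_0$ one takes $u_0=(z_0,0)$; the roles of the indices are exchanged, and each term in $\Gamma^\alpha_{ij}=g^{\alpha\bar l}\partial_{z^i}g_{j\bar l}$ carries a factor of $w^\alpha$ or $\bar w^\alpha$ inherited from $T_{12}$, $T_{21}$, or the corresponding blocks of $T^{-1}$, and hence vanishes at $w=0$.

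The main technical subtlety is verifying that $g_F$, and hence its Christoffel symbols, extends smoothly across the zero section $N_0$ in the first place; this is precisely where the hypothesis $\nu=1$ is used. A smooth extension of $\omega_F$ across $\{w=0\}$ forces $F(t)=G(e^t)$ for a smooth function $G$ with $G'(0)>0$, so the apparent $\|w\|^{-2}$ singularities in Lemma~\ref{Le:4.1} cancel; the limiting horizontal block $T_{11}|_{N_0}=(1+\lambda x_0)\,g_0$ is non-degenerate precisely because $\nu=1$, and the limiting vertical block equals $G'(0)\,I_r$, so the computation in (b) is legitimate at every point of $N_0$.
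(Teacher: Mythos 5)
Your proof is correct, but it is organized quite differently from the one in the paper, for both halves of the statement. For the fibers $M_{z_0}$, the paper does not separate the conformal factor: it writes down the full connection matrix $\widetilde{\Omega}_{\mathbb{C}}$ of the Levi-Civita connection of $\widetilde{g}=f^{-2}g_F$ (formula \eqref{eq8.1}), restricts it to the fiber directions at a point where $\mathrm{d}\phi(z_0)=0$, and checks that it coincides with the intrinsic connection $\widehat{\Omega}_{\mathbb{C}}$ of the induced metric on the fiber; the tangency of $\mathrm{d}f$ and the block-diagonality of $T$ are used simultaneously inside that single computation. Your decomposition through the conformal transformation law $\widetilde{II}(X,Y)=II(X,Y)+f^{-1}g_F(X,Y)(\nabla f)^{\perp}$ is more modular: it isolates the only input needed from the conformal factor (namely $\mathrm{grad}^{1,0}f=a\sum_\alpha w^\alpha\partial_{w^\alpha}$, which is exactly the computation already done in Lemma~\ref{Le:4.2}) and reduces the rest to a purely K\"ahler statement about $g_F$, which is cleaner and reusable. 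For the zero section $N_0$, however, the paper's argument is slicker than yours: it observes that $N_0$ is the fixed point set of the isometries $(z,w\sigma(z))\mapsto(z,wU\sigma(z))$ for $U\in\mathcal{U}(r)$ without eigenvalue $1$, so total geodesicity is automatic and no discussion of the behavior of $T$, $T^{-1}$ and the Christoffel symbols as $w\to 0$ is required; your route forces you to justify the smooth extension of $g_F$ across $\{w=0\}$ (via $F(t)=G(e^t)$, equivalently $\varphi(x_0)=0$, $\varphi'(x_0)=1$) before the symbol computation is even meaningful, which you correctly identify and handle, but which the isometry argument sidesteps entirely. Both proofs are valid; yours trades the paper's one heavy explicit formula for a conceptual splitting, at the cost of a more delicate boundary analysis at the zero section.
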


\begin{proof}[Proof]

Given a point $z_0\in M_0 $, take a sufficiently small holomorphic neighborhood $\Omega$ ($\Omega$ is required to be holomorphic homeomorphic  to a domain  in $\mathbb{C}^d$, the following $\Omega$ is not distinguished from its holomorphic homeomorphism in $\mathbb{C}^d$ and is still denoted as $\Omega$) of $z_0$  in $M$ such that the K\"ahler metric $g_0$ has a K\"ahler potential $\phi$ on $\Omega$ and the holomorphic bundle $L_0$ has a local trivializing holomorphic section $\sigma : \Omega \rightarrow L_0\backslash\{0\}$ over $\Omega$, thus $E = L_0^{\oplus r}$ has holomorphic coordinates $(z,w)\in \Omega\times \mathbb{C}^r$,   so the Hermitian metric $h$ of holomorphic vector bundles $E $ can be expressed as
\begin{equation*}
  h(u,u)=e^{\lambda\phi(z)}\|w\|^2,u=(z,w\sigma(z))\in  E.
\end{equation*}
Therefore, the K\"{a}hler form of the K\"{a}hler  metric $g_F$  is expressed as $ \omega_F=\sqrt{-1}\partial\overline{\partial}\Phi_F$, where
\begin{equation*}
  \Phi_F=\nu\phi(z)+F(t), t=\lambda\phi(z)+\log\rho^2, \rho=\|w\|,z\in \Omega\subset M_0, u=(z,w\sigma(z))\in E.
\end{equation*}

For $U\in \mathcal{U}(r)$ (the unitary group of order $r$) and 1 is not the eigenvalue of unitary matrix $U$ , it is easy to see that $N_0$ is the fixed point set of isometric mapping
\begin{equation*}
  (z,w\sigma(z))\in M\longmapsto (z,wU\sigma(z))\in M,
\end{equation*}
so $N_0$ is a totally geodesic submanifold of $(M,\widetilde{g})$.

Let $D$ be the Chern connection for the K\"ahler manifold $(M,g)$, $\widetilde{D}$ be the Levi-Civita connection for the Riemannian manifold $(M,\widetilde{g})$.

Setting
\begin{equation*}
 D\left(
                 \begin{array}{c}
                   \frac{\partial}{\partial Z^t} \\\\
                   \frac{\partial}{\partial \overline{Z}^t} \\
                 \end{array}
               \right)=\Omega_{\mathbb{C}}\otimes\left(
                 \begin{array}{c}
                   \frac{\partial}{\partial Z^t} \\\\
                   \frac{\partial}{\partial \overline{Z}^t} \\
                 \end{array}
               \right)
  ,\;\widetilde{D}\left(
                 \begin{array}{c}
                   \frac{\partial}{\partial Z^t} \\\\
                   \frac{\partial}{\partial \overline{Z}^t} \\
                 \end{array}
               \right)= \widetilde{\Omega}_{\mathbb{C}}\otimes\left(
                 \begin{array}{c}
                   \frac{\partial}{\partial Z^t} \\\\
                   \frac{\partial}{\partial \overline{Z}^t} \\
                 \end{array}
               \right),
\end{equation*}
where
\begin{equation*}
Z=(z,w),\;  \Omega_{\mathbb{C}}=\left(
           \begin{array}{cc}
              (\partial\; T)T^{-1} & 0 \\
             0 &  (\overline{\partial}\; \overline{T})\overline{T^{-1}} \\
           \end{array}
         \right),
\end{equation*}

\begin{eqnarray}
\nonumber \widetilde{\Omega}_{\mathbb{C}}  &=& \left(
                               \begin{array}{cc}
                                 (\partial\; T)T^{-1} & 0 \\
                                 0 & (\overline{\partial}\; \overline{T})\overline{T^{-1}} \\
                               \end{array}
                             \right)
  -\frac{\mathrm{d}f}{f}I_{2n}-\frac{1}{f}\left(
                                                                   \begin{array}{c}
                                                                     \frac{\partial f}{\partial Z^t} \\
                                                                     \frac{\partial f}{\partial \overline{Z}^t} \\
                                                                   \end{array}
                                                                 \right)\left(
                                                                          \begin{array}{cc}
                                                                            \mathrm{d}Z& \mathrm{d}\overline{Z} \\
                                                                          \end{array}
                                                                        \right)\\
\label{eq8.1}   & &+\frac{1}{f}\left(
                                                                                             \begin{array}{cc}
                                                                                               0 & T \\
                                                                                               \overline{T} & 0 \\
                                                                                             \end{array}
                                                                                           \right)\left(
                                                                                                    \begin{array}{c}
                                                                                                      \mathrm{d}Z^t \\
                                                                                                      \mathrm{d}\overline{Z}^t \\
                                                                                                    \end{array}
                                                                                                  \right)\left(
                                                                                                           \begin{array}{cc}
                                                                                                             \frac{\partial f}{\partial Z} & \frac{\partial f}{\partial \overline{Z}} \\
                                                                                                           \end{array}
                                                                                                         \right)\left(
                                                                                                                  \begin{array}{cc}
                                                                                                                    0 & \overline{T^{-1}} \\
                                                                                                                    T^{-1} & 0 \\
                                                                                                                  \end{array}
                                                                                                                \right),
\end{eqnarray}
 the definition of $T$ can be referred to Lemma \ref{Le:4.1}, $I_{2n}$ is the identity matrix of order $2n$,  and $\partial$ and $\overline{\partial}$ can be written as
$$\partial=\mathrm{d}z \frac{\partial}{\partial z^t}+\mathrm{d}w \frac{\partial}{\partial w^t},\;
\overline{\partial}=\mathrm{d}\overline{z} \frac{\partial}{\partial \overline{z}^t}+\mathrm{d}\overline{w} \frac{\partial}{\partial \overline{w}^t}.$$
For the expression of $\widetilde{\Omega}_{\mathbb{C}} $, see \cite{Feng-2020-2}.

There exists local coordinates $(z_1, \cdots, z_{d})$ on a neighborhood of the point $z_0$ such that
\begin{equation*}
  \phi(z_0)=0,\;\frac{\partial\phi}{\partial z}(z_0)=\frac{\partial\phi}{\partial \overline{z}}(z_0)=0,\;\frac{\partial^2\phi}{\partial z^t\partial\overline{z}}(z_0)=I_d\;(\text{the identity matrix of order}\; d)
\end{equation*}
for the K\"{a}hler potential $\phi$ on the domain $\Omega$. Then
\begin{equation*}
  \frac{\partial f}{\partial Z}=\left(
                                       \begin{array}{cc}
                                        f' \lambda\frac{\partial\phi}{\partial z} &  \frac{\partial f}{\partial w} \\
                                       \end{array}
                                     \right)=\left(
                                               \begin{array}{cc}
                                                 0 &  \frac{\partial f}{\partial w} \\
                                               \end{array}
                                             \right),\;\frac{\partial f}{\partial \overline{Z}}=\left(
                                       \begin{array}{cc}
                                        f' \lambda\frac{\partial\phi}{\partial \overline{z}} &  \frac{\partial f}{\partial \overline{w}} \\
                                       \end{array}
                                     \right)=\left(
                                               \begin{array}{cc}
                                                 0 &  \frac{\partial f}{\partial \overline{w}} \\
                                               \end{array}
                                             \right),
\end{equation*}
\begin{equation*}
  T=\left(
                          \begin{array}{cc}
                            (\nu+\lambda F')I_d & 0 \\
                            0 & T_{22} \\
                          \end{array}
                        \right),\; T^{-1}=\left(
           \begin{array}{cc}
             \frac{1}{\nu+\lambda F'}I_d & 0 \\
             0 & T_{22}^{-1}\\
           \end{array}
         \right),
\end{equation*}
\begin{equation*}
 \partial_wT= \left(
  \begin{array}{cc}
    \lambda F''\partial_wtI_d & 0 \\
    0 & \partial_w T_{22} \\
  \end{array}
\right),\; \overline{\partial}_wT= \left(
  \begin{array}{cc}
    \lambda F''\overline{\partial}_wtI_d & 0 \\
    0 & \overline{\partial}_w \overline{T_{22}} \\
  \end{array}
\right)
\end{equation*}
and
\begin{equation*}
  (\partial_wT)T^{-1}=\left(
           \begin{array}{cc}
             \frac{\lambda F''\partial_wt}{\nu+\lambda F'}I_d & 0 \\
             0 &  (\partial_wT_{22})T_{22}^{-1} \\
           \end{array}
         \right),\; (\overline{\partial}_w\overline{T})\overline{T^{-1}}=\left(
           \begin{array}{cc}
             \frac{\lambda F''\overline{\partial}_wt}{\nu+\lambda F'}I_d & 0 \\
             0 &  (\overline{\partial}_w\overline{T_{22}})\overline{T_{22}^{-1}} \\
           \end{array}
         \right)
\end{equation*}
at $z=z_0$, where $\partial_w=\mathrm{d}w\frac{\partial}{\partial w^t}$, $\overline{\partial}_w=\mathrm{d}\overline{w}\frac{\partial}{\partial \overline{w}^t}$, $\partial_wt=\frac{1}{\rho^2}\mathrm{d}w\overline{w}^t$ and $\overline{\partial}_wt=\frac{1}{\rho^2}w\mathrm{d}\overline{w}^t$.

When $z=z_0$, $\mathrm{d}z=0$ and $\mathrm{d}\overline{z}=0$, by \eqref{eq8.1} we have
\begin{equation*}
  \widetilde{D}\left(
                 \begin{array}{c}
                   \frac{\partial}{\partial w^t} \\\\
                   \frac{\partial}{\partial \overline{w}^t} \\
                 \end{array}
               \right)= \widetilde{\Omega}_{\mathbb{C}}^w\otimes\left(
                 \begin{array}{c}
                   \frac{\partial}{\partial w^t} \\\\
                   \frac{\partial}{\partial \overline{w}^t} \\
                 \end{array}
               \right),
\end{equation*}
where
\begin{eqnarray}
\nonumber \widetilde{\Omega}_{\mathbb{C}}^w  &=& \left(
                               \begin{array}{cc}
                                (\partial_wT_{22})T_{22}^{-1} & 0 \\
                                 0 &  (\overline{\partial}_w\overline{T_{22}})\overline{T_{22}^{-1}} \\
                               \end{array}
                             \right)
  -\frac{\mathrm{d}f}{f}I_{2r}-\frac{1}{f}\left(
                                                                   \begin{array}{c}
                                                                     \frac{\partial f}{\partial w^t} \\
                                                                     \frac{\partial f}{\partial \overline{w}^t} \\
                                                                   \end{array}
                                                                 \right)\left(
                                                                          \begin{array}{cc}
                                                                            \mathrm{d}w & \mathrm{d}\overline{w} \\
                                                                          \end{array}
                                                                        \right)\\
 \label{eq8.2}  & &+\frac{1}{f}\left(
                                                                                             \begin{array}{cc}
                                                                                               0 & T_{22} \\
                                                                                               \overline{T_{22}} & 0 \\
                                                                                             \end{array}
                                                                                           \right)\left(
                                                                                                    \begin{array}{c}
                                                                                                      \mathrm{d}w^t \\
                                                                                                      \mathrm{d}\overline{w}^t \\
                                                                                                    \end{array}
                                                                                                  \right)\left(
                                                                                                           \begin{array}{cc}
                                                                                                             \frac{\partial f}{\partial w} & \frac{\partial f}{\partial \overline{w}} \\
                                                                                                           \end{array}
                                                                                                         \right)\left(
                                                                                                                  \begin{array}{cc}
                                                                                                                    0 & \overline{T_{22}^{-1}} \\
                                                                                                                    T_{22}^{-1} & 0 \\
                                                                                                                  \end{array}
                                                                                                                \right).
\end{eqnarray}

On the other hand, let $\widehat{D}$ be the Levi-Civita connection of the induced metric of $\widetilde{g}$ on $M_{z_0}$, then
\begin{equation*}
  \widehat{D}\left(
                 \begin{array}{c}
                   \frac{\partial}{\partial w^t} \\\\
                   \frac{\partial}{\partial \overline{w}^t} \\
                 \end{array}
               \right)= \widehat{\Omega}_{\mathbb{C}}\otimes\left(
                 \begin{array}{c}
                   \frac{\partial}{\partial w^t} \\\\
                   \frac{\partial}{\partial \overline{w}^t} \\
                 \end{array}
               \right).
\end{equation*}
By Lemma \ref{Le:4.1}, we get $\widehat{\Omega}_{\mathbb{C}}=\widetilde{\Omega}_{\mathbb{C}}^w$, this implies that
\begin{equation*}
  \widetilde{D}_{e_j}e_i=\widehat{D}_{e_i}e_j
\end{equation*}
at $z=z_0$ for all $e_i,e_j\in\{\frac{\partial}{\partial w_1},\ldots,\frac{\partial}{\partial w_{r}},\frac{\partial}{\partial \overline{w_1}},\ldots,\frac{\partial}{\partial \overline{w_{r}}}\}$ , thus  $M_{z_0}$ is a totally geodesic submanifold of $(M,\widetilde{g})$.

\end{proof}

\begin{Theorem}\label{Th:3.2}
Under the assumption of  Theorem \ref{Th:1.1}, let
\begin{equation*}
 l =l(t_0)+\int_{x(t_0)}^{x(t)}\frac{1}{(ax+b)\sqrt{\varphi(x)}}\mathrm{d}x, x_0<x(t_0)<x_1.
\end{equation*}

If $  \lim_{x\rightarrow x_0^{+}}l=-\infty$ for $M$ is any one of
$$E^{*}, \mathbb{B}_{L_0}^{r*}, E-\{u\in E:h(u,u)\leq 1\},\mathbb{P}(E \oplus 1)-M_0, \mathbb{P}(E \oplus 1)-\{u\in E:h(u,u)\leq 1\};$$
and $ \lim_{x\rightarrow x_1^{-}}l=+\infty$ for $M$ is any one of $E, E^{*}, \mathbb{B}_{L_0}^{r}$, $ E-\{u\in E:h(u,u)\leq 1\}$ and $\mathbb{B}_{L_0}^{r*}$. Then the Riemannian metric $\widetilde{g}$ is complete on $M$.
\end{Theorem}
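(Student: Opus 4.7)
The plan is to establish completeness of $(M,\widetilde{g})$ by Hopf--Rinow: it suffices to show that every $\widetilde{g}$-Cauchy sequence in $M$ has a convergent subsequence, which reduces to showing that the $\widetilde{g}$-distance from any interior point to each topological end of $M$ is infinite. The strategy splits the problem into a radial direction (along the fibers of $\Pi:E\to M_0$) and horizontal/angular directions, the latter contributing only bounded amounts on compact level sets of $x$. By Lemma \ref{Le:8.1}, each fiber $M_{z_0}=\Pi^{-1}(z_0)\cap M$ is totally geodesic in $(M,\widetilde{g})$, and so is $N_0$ when $\nu=1$; this allows measurement of the radial $\widetilde{g}$-distance inside a single fiber.

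Using Lemma \ref{Le:4.1} for the block form of $T$, the $g_F$-arclength element along a radial curve (fixed $z\in M_0$ and fixed angular direction in $w$) is a constant multiple of $\sqrt{\varphi(x)}\,dt=dx/\sqrt{\varphi(x)}$; dividing by $f=ax+b$ yields the $\widetilde{g}$-arclength element $dx/((ax+b)\sqrt{\varphi(x)})$. Hence the function $l$ in the statement is precisely the signed $\widetilde{g}$-arclength along such a radial curve measured from $\{x=x(t_0)\}$. I would then classify, for each of the seven admissible $M$, what the endpoints $x_0,x_1$ correspond to geometrically: each is either an \emph{ideal boundary} (a topological end of $M$, at which completeness demands infinite $\widetilde{g}$-distance) or a \emph{smooth closure} (a compact complex submanifold already inside $M$ --- the zero section of $E$, the section at infinity of $\mathbb{P}(E\oplus 1)$, or a unit sphere bundle --- to which the metric extends smoothly via the standard Hwang--Singer boundary conditions on $\varphi$). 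The hypotheses $\lim_{x\to x_0^+}l=-\infty$ and $\lim_{x\to x_1^-}l=+\infty$ are imposed exactly at the ideal-boundary endpoints of each $M$ listed in the theorem, which gives the required infinite distance there.

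With this dichotomy in hand, completeness follows as follows: for any $\widetilde{g}$-Cauchy sequence $\{p_k\}\subset M$, the values $x(p_k)$ stay in a compact sub-interval of $[x_0,x_1]$ (possibly including smooth-closure endpoints) by the infinite-distance property just established. On any such strip $\widetilde{g}$ is uniformly equivalent to $\Pi^*g_0$ plus a bounded fiber metric, and since $(M_0,g_0)$ is complete by hypothesis and the level sets compactify to sphere bundles, $\mathbb{P}^{r-1}$-bundles, or a single point of the zero/infinity section, a convergent subsequence must exist. The principal technical obstacle will be the case-by-case bookkeeping to verify that the ideal-boundary versus smooth-closure dichotomy is correctly matched to each of the seven admissible $M$'s in the statement, and that at each smooth-closure endpoint the momentum profile $\varphi$ produced by Theorem \ref{Th:1.1} indeed satisfies the smooth-extension conditions at the corresponding $x_i$; this is routine but tedious, and is the only substantive step beyond the radial length computation.
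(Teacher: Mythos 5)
Your overall strategy (Hopf--Rinow, confining Cauchy sequences to a strip $x\in[c_1,c_2]$ and then using completeness of $g_0$ plus compactness of the fibre annuli) is genuinely different from the paper's, which instead exhibits a single smooth proper function $H=\log\bigl((e^{l}+e^{-l})^m+\psi\bigr)$ with bounded $\widetilde{g}$-gradient and invokes Gordon's analytic completeness criterion. However, as written your argument has a gap at its central step: the claim that the $\widetilde{g}$-distance to each ideal end is infinite. You derive this from the radial arclength computation $dl=dx/((ax+b)\sqrt{\varphi})$ \emph{inside a fibre} together with total geodesy of the fibres (Lemma \ref{Le:8.1}). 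That only shows the induced metric on each fibre is complete; it does not bound below the length of an arbitrary divergent path in $M$, which may combine radial and horizontal motion. This matters because near an ideal end the horizontal part of $\widetilde{g}$ is comparable to $\frac{\nu+\lambda x}{(ax+b)^2}\,g_0$, and in several of the admitted cases $\nu+\lambda x\to 0$ (or the relevant ratio degenerates) as $x\to x_0$ or $x_1$, so the metric is \emph{not} uniformly equivalent to $\Pi^*g_0$ plus a bounded fibre metric on strips abutting those ends; your remark that the angular/horizontal directions ``contribute only bounded amounts on compact level sets of $x$'' does not engage with this, since the level sets of $x$ are non-compact when $M_0$ is, and the degeneration occurs precisely in the limit $x\to x_i$. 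Ruling out short mixed paths is exactly the delicate point: the paper spends the bulk of its proof on asymptotic expansions of $e^{-l}$ near $x_0$ to force $f^2/\bigl(p^2(\nu+\lambda x)\bigr)\to 0$ for a suitable power $m$.

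The gap is repairable, and the repair makes your route arguably cleaner than the paper's: using the full inverse-matrix computation of Lemma \ref{Le:4.1} (the cross terms $T_{12},T_{21}$ cancel exactly), one finds that for any function of $t$ alone, $\|\mathrm{d}p(t)\|^2_{\widetilde{g}}=2(fp')^2/F''$; taking $p=l$ gives $\|\mathrm{d}l\|^2_{\widetilde{g}}\equiv 2$ on all of $M$, so $l$ is globally $\sqrt{2}$-Lipschitz for $d_{\widetilde{g}}$ and every path reaching an ideal end (where $l\to\pm\infty$ by hypothesis) has infinite length; a Cauchy sequence therefore has bounded $l$, hence $x$-values in a compact subinterval avoiding the ideal ends. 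On such a strip one then needs not uniform equivalence but only the one-sided bound $\|v\|^2_{\widetilde{g}}\ge \frac{\nu+\lambda x}{f^2}\|\mathrm{d}\Pi(v)\|^2_{g_0}$, which follows from the Schur complement $T_{11}-T_{12}T_{22}^{-1}T_{21}=(\nu+\lambda F')\,\partial\bar{\partial}\phi$ read off from $(T^{-1})_{11}$ in Lemma \ref{Le:4.1}. Without these two computations (global Lipschitz bound for $l$, and the Schur-complement lower bound replacing ``uniform equivalence''), the proposal does not yet constitute a proof.
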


\begin{Remark}
By Lemma 3.2 of \cite{Feng-2020-2} and Theorem \ref{Th:3.2},  if the metric $g_0$ on $M_0$ is complete and  fiber metrics of the metric $\widetilde{g}$ on $M$ are complete, then the metric $\widetilde{g}$ must be complete on $M$.
\end{Remark}

\begin{proof}[Proof of Theorem \ref{Th:3.2}]
The proof is divided into two parts.

$(\mathrm{i})$  Firstly, we calculate the square of the norm of the differential of a function  on $M$ with respect to the metric $\widetilde{g}$.

Let  $H=\psi(z)+p(t)$, $z\in M_0$, $t=\log h(u,u)$ and $u\in E$.

 According to the proof of Lemma \ref{Le:8.1}, the K\"{a}hler form of the K\"{a}hler  metric $g_F$  can be expressed locally as $ \omega_F=\sqrt{-1}\partial\overline{\partial}\Phi_F$, where
\begin{equation*}
  \Phi_F=\nu\phi(z)+F(t), t=\lambda\phi(z)+\log\rho^2, \rho=\|w\|,z\in \Omega\subset M_0, u=(z,w\sigma(z))\in E.
\end{equation*}

Since
\begin{equation*}
  \frac{\partial H}{\partial z}= \frac{\partial \psi}{\partial z}+\lambda p'\frac{\partial \phi}{\partial z}
\end{equation*}
and
\begin{equation*}
   \frac{\partial H}{\partial w}=p' \frac{\overline{w}}{\rho^2},
\end{equation*}
thus
\begin{equation*}
 \frac{\partial H}{\partial Z}=\left(  \frac{\partial \psi}{\partial z}+\lambda p' \frac{\partial \phi}{\partial z},p' \frac{\overline{w}}{\rho^2}\right),
 \|\mathrm{d}H\|_{\widetilde{g}}^2=2\frac{\partial H}{\partial \overline{Z}}\left(\frac{1}{f^2}T\right)^{-1}\frac{\partial H}{\partial Z^t},
\end{equation*}
where
\begin{equation*}
  Z=(z,w), T=\frac{\partial^2\Phi_F}{\partial Z^t\partial \overline{Z}}.
\end{equation*}
By Lemma \ref{Le:4.1}, we have
\begin{eqnarray*}
 \frac{\partial H}{\partial \overline{z}}T^{-1}_{11}\frac{\partial H}{\partial z^t}  &=&  \frac{1}{\nu+\lambda F'}
 \left(  \frac{\partial \psi}{\partial \overline{z}}+\lambda p' \frac{\partial \phi}{\partial \overline{z}}\right)
 \left(\frac{\partial^2\phi}{\partial z^t\partial \bar z}\right)^{-1}
 \left(  \frac{\partial \psi}{\partial z^t}+\lambda p' \frac{\partial \phi}{\partial z^t}\right)\\
   &=& \frac{1}{2(\nu+\lambda F')} \| \mathrm{d}\psi+\lambda p' \mathrm{d}\phi\|^2_{g_0},
\end{eqnarray*}
\begin{eqnarray*}
 \frac{\partial H}{\partial \overline{z}}T^{-1}_{12}\frac{\partial H}{\partial w^t}  &=&  -\frac{\lambda}{\nu+\lambda F'}
\left(  \frac{\partial \psi}{\partial \overline{z}}+\lambda p' \frac{\partial \phi}{\partial \overline{z}}\right)
 \left(\frac{\partial^2\phi}{\partial z^t\partial \bar z}\right)^{-1}
 \frac{\partial\phi}{\partial z^t}wp' \frac{\overline{w}^t}{\rho^2}\\
   &=&- \frac{\lambda p' }{(\nu+\lambda F')}
  \left(  \frac{\partial \psi}{\partial \overline{z}}+\lambda p' \frac{\partial \phi}{\partial \overline{z}}\right)
   \left(\frac{\partial^2\phi}{\partial z^t\partial \bar z}\right)^{-1}
   \frac{\partial\phi}{\partial z^t},
\end{eqnarray*}
\begin{equation*}
   \frac{\partial H}{\partial \overline{w}}T^{-1}_{21}\frac{\partial H}{\partial z^t}=- \frac{\lambda p' }{(\nu+\lambda F')}
  \frac{\partial \phi}{\partial \overline{z}}\left(\frac{\partial^2\phi}{\partial z^t\partial \bar z}\right)^{-1}
   \left(  \frac{\partial \psi}{\partial z^t}+\lambda p' \frac{\partial \phi}{\partial z^t}\right),
\end{equation*}
\begin{eqnarray*}
 \frac{\partial H}{\partial \overline{w}}T^{-1}_{22}\frac{\partial H}{\partial w^t}  &=&\frac{(p' )^2}{\rho^4} w \left(\frac{r^2}{F'}I_{r}+\left(\frac{1}{F''}-\frac{1}{F'}\right)\overline{w}^tw+\frac{\lambda^2}{\nu+\lambda F'}\frac{\partial\phi}{\partial \bar{z}}\left(\frac{\partial^2\phi}{\partial z^t\partial \bar z}\right)^{-1}\frac{\partial\phi}{\partial z^t}\overline{w}^tw\right)\overline{w}^t\\
  &=&(p' )^2 \left(\frac{1}{F'}+\left(\frac{1}{F''}-\frac{1}{F'}\right)+\frac{\lambda^2}{\nu+\lambda F'}\frac{\partial\phi}{\partial \bar{z}}\left(\frac{\partial^2\phi}{\partial z^t\partial \bar z}\right)^{-1}\frac{\partial\phi}{\partial z^t}\right)\\
   &=&(p' )^2\left(\frac{1}{F''}+\frac{\lambda^2}{2(\nu+\lambda F')}\|\mathrm{d}\phi\|^2_{g_0}\right).
\end{eqnarray*}
So
\begin{eqnarray*}
   & &\|\mathrm{d}H\|_{\widetilde{g}}^2 =2f^2\frac{\partial H}{\partial \overline{Z}}T^{-1}\frac{\partial H}{\partial Z^t}\\  
   &=& 2f^2\left( \frac{1}{2(\nu+\lambda F')} \| \mathrm{d}\psi+\lambda p' \mathrm{d}\phi\|^2_{g_0}
   - \frac{\lambda p' }{(\nu+\lambda F')} ( \mathrm{d}\psi+\lambda p' \mathrm{d}\phi,\mathrm{d}\phi)_{g_0}\right.\\
   &&\left.+(p' )^2\Big(\frac{1}{F''}+\frac{\lambda^2}{2(\nu+\lambda F')}\|\mathrm{d}\phi\|^2_{g_0}\Big)\right)  \\
   &=&  2f^2\left( \frac{ 1}{2(\nu+\lambda F')} \|\mathrm{d}\psi\|^2_{g_0}+\frac{(p' )^2}{F''}\right)\\
   &=&  \frac{ f^2}{\nu+\lambda F'} \|\mathrm{d}\psi\|^2_{g_0}+2\frac{(fp' )^2}{F''}.
\end{eqnarray*}

$(\mathrm{ii})$ Secondly, we prove that the metric $\widetilde{g}$ is complete.

Since the metric $g_0$ is complete, it follows that there is a  smooth proper function $\psi(z)$  on $M_0$ such that $\psi(z)>1$ (otherwise,  replace $\psi$
with the function  $c\log(\psi^2+4)+1$) and $\|\psi\|^2_{g_0}\leq 1$. For a sufficiently large positive integer $m$, let $p=(e^l+e^{-l})^m$, $H=\log(p(t)+\psi(z))$, then $H$ is a smooth proper function on $M$, and
\begin{equation*}
 \|\mathrm{d}H\|_{\widetilde{g}}^2= \frac{\|\mathrm{d}(p+\psi)\|_{\widetilde{g}}^2 }{(p+\psi)^2}= \frac{1}{(p+\psi)^2}\frac{f^2}{\nu+\lambda F'} \|\mathrm{d}\psi\|^2_{g_0}
 + \frac{2}{(p+\psi)^2}\frac{(fp')^2}{F''}.
\end{equation*}

From
\begin{equation*}
l=l(t_0)+\int_{x(t_0)}^{x(t)}\frac{1}{(ax+b)\sqrt{\varphi(x)}}\mathrm{d}x=l(t_0)+\int_{t_0}^t\frac{\sqrt{F''(t)}}{f(t)}\mathrm{d}t,
\end{equation*}
we get
\begin{equation*}
  f(t)l' (t)=\sqrt{F''(t)}.
\end{equation*}
Then
\begin{equation*}
 \frac{1}{(p+\psi)^2}\frac{(fp')^2}{F''}= \frac{p^2}{(p+\psi)^2}\left(\frac{p'}{p}\right)^2\frac{f^2}{F''}\leq \left(\frac{p'}{p}\right)^2\frac{f^2}{F''}= m^2\left(\frac{e^l-e^{-l}}{e^l+e^{-l}}\right)^2\frac{(fl' )^2}{F''}\leq m^2.
\end{equation*}

Let
\begin{equation*}
  x_0=\inf\{F'(t)\},\;x_1=\sup\{F'(t)\},\;X_0=\nu+\lambda x_0,\;X_1=\nu+\lambda x_1.
\end{equation*}
Then $x_0<x_1$, $X_0\geq 0$ and $X_1\geq 0$.

If $0<X_0,X_1<+\infty$, then $\frac{f(x)^2}{\nu+\lambda x}$ is bounded on $[x_0,x_1]$, so by
\begin{equation*}
  \frac{1}{(p+\psi)^2}\frac{f^2}{\nu+\lambda F'} \|\mathrm{d}\psi\|^2_{g_0}\leq \frac{f^2}{p^2(\nu+\lambda F')}=\frac{f^2}{p^2(\nu+\lambda x)}\leq \frac{f^2}{\nu+\lambda x},
\end{equation*}
we get $\frac{1}{(p+\psi)^2}\frac{f^2}{\nu+\lambda F'}  \|\mathrm{d}\psi\|^2_{g_0}$ is bounded on $M$. Therefore, the metric $\widetilde{g}$ is complete on  $M$.

If $f=\nu+\lambda x$ and $0\leq X_0, X_1<+\infty$, we also obtain that the metric $\widetilde{g}$ is complete on  $M$.

For the case of $X_0=0$ or $X_1=0$ or $X_0=+\infty$ or $X_1=+\infty$.  Without losing generality, we assume that $X_0=0$ or $+\infty$.

When $X_0=0$, let $u=\frac{1}{x-x_0}$ and $l_0=l(t_0)$. By
\begin{equation*}
  \int_{x(t_0)}^{x_0}\frac{1}{(ax+b)\sqrt{\varphi(x)}}\mathrm{d}x=-\infty,
\end{equation*}
 there exists a positive rational number $k\geq 1$ and  a positive real number $\delta$ such that $(ax+b)\sqrt{\varphi(x)}=(x-x_0)^k\psi(x)$, $\psi(x_0)>0$, and the Taylor expansion of a function $\frac{1}{\psi(x)}$ at $x_0$:
 \begin{equation*}
   \frac{1}{\psi(x)}=\sum_{j=0}^{+\infty}A_j(x-x_0)^j
 \end{equation*}
 converges uniformly on the interval $[x_0,x_0+\delta]$. So a series
\begin{equation*}
  \frac{1}{(ax+b)\sqrt{\varphi(x)}}=\sum_{j=0}^{[k]-1}\frac{A_j}{(x-x_0)^{k-j}}+\sum_{j=[k]}^{\infty}A_j(x-x_0)^{j-k},
\end{equation*}
converges uniformly on the closed subinterval of the interval $(x_0,x_0+\delta)$, where $[k]$  is the largest integer not exceeding $k$.

Using
$$\nu+\lambda x=\lambda (x-x_0)\;(\mathrm{by}\; \nu+\lambda x_0=0 ),$$
\begin{eqnarray*}
 l  &=&l_0+\int_{x({t_0})}^{x}\frac{1}{(au+b)\sqrt{\varphi(u)}}\mathrm{d}u\;\left(x(t_0)\in (x_0,x_0+\delta),x\in (x_0,x_0+\delta)\right)  \\
   &=& C+\sum_{j=0}^{[k]-2}\frac{A_j}{(j+1-k)(x-x_0)^{k-j-1}} \\
   & & +A_{[k]-1}\times\left\{\begin{array}{l}
                                                                                                                                           \log(x-x_0)\;(\mathrm{for}\;k=[k]) \\\\
                                                                                                                                           \frac{1}{([k]-k)(x-x_0)^{k-[k]}}\;(\mathrm{for} \;k>[k])
                                                                                                                                         \end{array}
  \right.+o((x-x_0)^{[k]+1-k})
\end{eqnarray*}
and
\begin{equation*}
  e^{-l}=e^{-C+o((x-x_0)^{[k]+1-k}) }e^{\sum_{j=0}^{[k]-2}\frac{A_j}{(k-j-1)(x-x_0)^{k-j-1}}}\times\left\{\begin{array}{l}
                                                                                    \frac{1}{(x-x_0)^{A_{[k]-1}}}\;(\mathrm{for}\;k=[k]) \\\\
                                                                                    e^{\frac{A_{[k]-1}}{k-[k]}\frac{1}{(x-x_0)^{k-[k]}}}\;(\mathrm{for}\;k>[k])
                                                                                  \end{array}
  \right.,
\end{equation*}
 it follows that
\begin{equation*}
   e^{-l}\sim e^{-C}e^{\sum_{j=0}^{[k]-2}\frac{A_ju^{k-j-1}}{k-j-1}}\times\left\{\begin{array}{l}
                                                                                    u^{A_{[k]-1}}\;(\mathrm{for}\;k=[k]) \\\\
                                                                                    e^{\frac{A_{[k]-1}}{k-[k]}u^{k-[k]}}\;(\mathrm{for}\;k>[k])
                                                                                  \end{array}
  \right.\;\;(\text{as}\;u\rightarrow +\infty)
\end{equation*}
and
\begin{equation*}
  \frac{1}{p^2(\nu+\lambda x)} \sim \frac{u}{\lambda e^{-2ml}} \sim \frac{e^{2mC}}{\lambda} \times\left\{ \begin{array}{l}
                                      u^{1-2mA_0}\;(\text{for}\;  k=1) \\\\
                                        \frac{u^{1-2mA_{[k]-1}}}{e^{2m\sum_{j=0}^{[k]-2}\frac{A_ju^{k-j-1}}{k-j-1}}}\;(\text{for}\;k=[k]>1)\\\\
                                          \frac{u}{e^{2m\sum_{j=0}^{[k]-1}\frac{A_ju^{k-j-1}}{k-j-1}}}\;(\text{for}\;k>[k]\geq 1)
                                      \end{array}
    \right.
\end{equation*}
as $\ u\rightarrow+\infty$, here $C$ is a constant that depends on $x_0$, $l_0$ and $x(t_0)$.
By $f>0$ and $\varphi>0$ on $M$, we have $A_0>0$, so there exists a positive integer $m$ such that $mA_0>1$.
Thus
\begin{equation*}
  \frac{1}{(p+\psi)^2}\frac{f^2}{\nu+\lambda F'} \|\mathrm{d}\psi\|^2_{g_0}\leq \frac{f^2}{p^2(\nu+\lambda F')}\rightarrow 0\;\mathrm{as}\;x\rightarrow x_0^{+}.
\end{equation*}

When $X_0=+\infty$, namely $x_0=-\infty$ and $\lambda<0$. If $f(x)=ax+b\equiv b$, then
\begin{equation*}
  \frac{1}{(p+\psi)^2}\frac{f^2}{\nu+\lambda F'} \|\mathrm{d}\psi\|^2_{g_0}\leq \frac{f^2}{p^2(\nu+\lambda x)}\rightarrow 0\;\mathrm{as}\;x\rightarrow -\infty.
\end{equation*}
If $f(x)=ax+b$ and $a<0$, then by $\lim_{x\rightarrow x_0^{+}}l=-\infty$, $f>0$ and $\varphi>0$, there are constants $c$ and $C$ such that
\begin{equation*}
  \frac{1}{(ax+b)\sqrt{\varphi(x)}}=\frac{1}{-x}\sum_{j=0}^{+\infty}\frac{A_j}{x^j} \;\;(x\in (-\infty,c),A_0>0),
\end{equation*}
\begin{eqnarray*}
 l  &=&l_0+\int_{x(t_0)}^{x}\frac{1}{(au+b)\sqrt{\varphi(u)}}\mathrm{d}u  \\
   &=& C-A_0\log|x|+\sum_{j=1}^{+\infty}\frac{A_j}{jx^j} \\
   &= & C-A_0\log|x|+o(\frac{1}{x})
\end{eqnarray*}
and
\begin{equation*}
   \frac{f^2}{p^2(\nu+\lambda x)}\sim \frac{a^2x}{\lambda e^{-2ml}}\sim \frac{a^2e^{2mC}}{\lambda}\frac{x}{|x|^{2mA_0}}
\end{equation*}
as $\ x\rightarrow-\infty$. Since $A_0>0$, there exists a positive integer $m$ such that $mA_0>1$, so
\begin{equation*}
  \frac{1}{(p+\psi)^2}\frac{f^2}{\nu+\lambda F'} \|\mathrm{d}\psi\|^2_{g_0}\leq \frac{f^2}{p^2(\nu+\lambda x)}\rightarrow 0\;\mathrm{as}\;x\rightarrow -\infty.
\end{equation*}

From the above conclusions, we have that
\begin{equation*}
 \|\mathrm{d}H\|_{\widetilde{g}}^2= \frac{\|\mathrm{d}(p+\psi)\|_g^2 }{(p+\psi)^2}= \frac{1}{(p+\psi)^2}\frac{f^2}{\nu+\lambda F'} \|\mathrm{d}\psi\|^2_{g_0}
 + \frac{1}{(p+\psi)^2}\frac{(fp')^2}{F''}.
\end{equation*}
 is bounded on $M$, so by \cite{gordon,gordon2}  the metric $\widetilde{g}$ is complete on $M$.
\end{proof}

\setcounter{equation}{0}
\section{Proof of Theorem \ref{Th:1.1} }

\begin{proof}[Proof of Theorem \ref{Th:1.1}]

From Lemma 4.1 of \cite{Fu-Yau-Zhou}, the metric $g_F$ can be extended across $M=\mathbb{P}(E \oplus 1)-E$ if and only if $\varphi(x_1)=0$ and $\varphi'(x_1)=-1$.

Since the metric $g_0$ is complete on $M_0$, using Lemma \ref{Le:8.1} and Theorem \ref{Th:3.2}, it follows that the metric $\widetilde{g}$ is complete on $M$ if and only if its fiber metrics are complete.

From the proof of Lemma \ref{Le:8.1}, the K\"{a}hler form of the K\"{a}hler  metric $g_F$  can be expressed locally as $ \omega_F=\sqrt{-1}\partial\overline{\partial}\Phi_F$, where
\begin{equation*}
  \Phi_F=\nu\phi(z)+F(t), t=\lambda\phi(z)+\log\rho^2, \rho=\|w\|,z\in \Omega\subset M_0, u=(z,w\sigma(z))\in E.
\end{equation*}
Therefore, the proof of Theorem \ref{Th:1.1} is converted to the proof of Theorem \ref{Th:4.2} below.

\end{proof}

\begin{Theorem}\label{Th:4.2}{

Let $g_0$ be a K\"{a}hler  metric  on a domain $\Omega\subset\mathbb{C}^d$ associated with the K\"{a}hler form $\omega_{\phi}=\sqrt{-1}\partial\overline{\partial}\phi$, namely $g_0(X,Y)=\omega_{\phi}(X,JY)$,  where $\phi$ is globally defined  the K\"{a}hler potential on the domain $\Omega\subset \mathbb{C}^{d}$, and $J$ is the canonical complex structure on the complex Euclidean space. Suppose $M$ represents any of the following sets

$$\Omega\times\mathbb{C}^{r},\;\left\{(z,w)\in \Omega\times\mathbb{C}^{r}:0< e^{\lambda\phi(z)}\|w\|^2\right\},\;\left\{(z,w)\in \Omega\times\mathbb{C}^{r}: e^{\lambda\phi(z)}\|w\|^2<1\right\}\;$$
and
$$ \left\{(z,w)\in \Omega\times\mathbb{C}^{r}: 0<e^{\lambda\phi(z)}\|w\|^2<1\right\},\;\left\{(z,w)\in \Omega\times\mathbb{C}^{r}: 1<e^{\lambda\phi(z)}\|w\|^2\right\}.$$

Set $g_F$ is a K\"{a}hler  metric  on the domain $M$ associated with the K\"{a}hler form $\omega_F=\sqrt{-1}\partial\overline{\partial}\Phi_F$, here
$$\Phi_F(z,w)=\nu\phi(z)+F(t),\;t=\lambda\phi(z)+\log\|w\|^2,\;\nu=0,1,\;\lambda\neq 0.$$

Let
\begin{equation*}
n=d+r,\; x=F'(t), \;\varphi(x)=F''(t),\;f=ax+b>0,\;a\neq 0,\;  x_0=\inf\{F'(t)\},\;x_1=\sup\{F'(t)\}.
\end{equation*}

The following conclusions hold.

$(\mathrm{I})$  For $\nu=1$ and $b=1$,  $\widetilde{g}=\frac{1}{f^2}g_F$ is an Einstein  metric (namely $\mathrm{Ric}_{\widetilde{g}}=\mu \;\widetilde{g}$) with the complete fibre metrics on the domain $M$ if and only if
\begin{equation}\label{ne4.1.1}
 \left\{ \begin{array}{l}
  \mathrm{Ric}_{g_0}=(\lambda+\mu-2a)\;g_0,\\\\
  \varphi(x)=\frac{(1+ax)^{2d+1}(a\lambda x+2a-\lambda)}{(1+\lambda x)^{d}}\int_0^x\frac{(1+\lambda u)^{d}\left(\mu(1+\lambda u)-(\lambda+\mu-2a)(1+au)^2\right)}{(1+au)^{2d+2}(a\lambda u+2a-\lambda)^2}\mathrm{d}u,0\leq x<-\frac{1}{a},\\\\
 \mu\leq 0,a\leq \lambda,a<0, r=1,
 \end{array}\right.
\end{equation}
here
\begin{equation*}
  M=\left\{\begin{array}{l}
             \left\{(z,w)\in \Omega\times\mathbb{C}^{r}: e^{\lambda\phi(z)}\|w\|^2<1\right\} \mathrm{for}\; \mu<0,\\\\
             \Omega\times\mathbb{C}^{r}\;\mathrm{for}\;\mu=0.
           \end{array}
  \right.
\end{equation*}

$(\mathrm{II})$

$(\mathrm{II-1})$  $\widetilde{g}=\frac{1}{f^2}g_F$ is an Einstein  metric (namely $\mathrm{Ric}_{\widetilde{g}}=\mu \;\widetilde{g}$) with the complete fibre metrics on the domain $M=\left\{(z,w)\in \Omega\times\mathbb{C}^{r}: 0<e^{\lambda\phi(z)}\|w\|^2<1\right\}$ if and only if
\begin{equation}\label{ne4.1.2.1}
 \left\{ \begin{array}{l}
  \mathrm{Ric}_{g_0}=\mu\frac{\nu+\lambda x_0}{(b+ax_0)^2}\;g_0,\\\\
  \varphi(x)=\mu\frac{(b+ax)^{2d+1}(a\lambda x+2a\nu-b\lambda)}{(\nu+\lambda x)^{d}}\int_{x_0}^x\frac{(\nu+\lambda u)^{d}\left((\nu+\lambda u)-\frac{\nu+\lambda x_0}{(b+ax_0)^2}(b+au)^2\right)}{(b+au)^{2d+2}(a\lambda u+2a\nu-b\lambda)^2}\mathrm{d}u,\;x_0< x<x_1,\\\\
 r=1,\mu<0,a<0,
 \end{array}\right.
\end{equation}
where
$$\left\{\begin{array}{l}
    -\infty\leq x_0<x_1=-\frac{b}{a}\; \mathrm{for}\; \lambda<0\;\mathrm{and}\;-\frac{b}{a}\leq -\frac{\nu}{\lambda}, \\\\
    -\frac{\nu}{\lambda}\leq x_0<x_1=-\frac{b}{a}\; \mathrm{for}\; \lambda>0\;\mathrm{and}\; -\frac{\nu}{\lambda}<-\frac{b}{a}.
  \end{array}\right.
$$

$(\mathrm{II-2})$  $\widetilde{g}=\frac{1}{f^2}g_F$ is an Einstein  metric (namely $\mathrm{Ric}_{\widetilde{g}}=\mu \;\widetilde{g}$) with the complete fibre metrics on the domain $M=\left\{(z,w)\in \Omega\times\mathbb{C}^{r}: 1<e^{\lambda\phi(z)}\|w\|^2\right\}$ if and only if
\begin{equation}\label{ne4.1.2.1}
 \left\{ \begin{array}{l}
  \mathrm{Ric}_{g_0}=\mu\frac{\nu+\lambda x_1}{(b+ax_1)^2}\;g_0,\\\\
  \varphi(x)=\mu\frac{(b+ax)^{2d+1}(a\lambda x+2a\nu-b\lambda)}{(\nu+\lambda x)^{d}}\int_{x_1}^x\frac{(\nu+\lambda u)^{d}\left((\nu+\lambda u)-\frac{\nu+\lambda x_1}{(b+ax_1)^2}(b+au)^2\right)}{(b+au)^{2d+2}(a\lambda u+2a\nu-b\lambda)^2}\mathrm{d}u,\;x_0< x<x_1,\\\\
 r=1,\mu<0,a>0,
 \end{array}\right.
\end{equation}
where
$$\left\{\begin{array}{l}
   -\frac{b}{a}=x_0<  x_1\leq +\infty\; \mathrm{for}\; \lambda>0\;\mathrm{and}\;-\frac{b}{a}\geq -\frac{\nu}{\lambda}, \\\\
   -\frac{b}{a}=  x_0<x_1\leq-\frac{\nu}{\lambda}\; \mathrm{for}\; \lambda<0\;\mathrm{and}\; -\frac{\nu}{\lambda}>-\frac{b}{a}.
  \end{array}\right.
$$

$(\mathrm{III})$ Let $r=1$, $\varphi(x_1)=0$ and $\varphi'(x_1)=-1$.

$(\mathrm{III-1})$ $\widetilde{g}=\frac{1}{f^2}g_F$ is an Einstein  metric (namely $\mathrm{Ric}_{\widetilde{g}}=\mu \;\widetilde{g}$) with the fibre metrics complete at $t=-\infty$ on the domain $ M=\left\{(z,w)\in \Omega\times\mathbb{C}^{r}: 0< e^{\lambda\phi(z)}\|w\|^2\right\}$ if and only if
\begin{equation}\label{ne4.1.4}
 \left\{ \begin{array}{l}
  \mathrm{Ric}_{g_0}=\gamma\;g_0,\\\\
  \gamma=\frac{a\lambda x_1+2a\nu-b\lambda}{ax_1+b},\\\\
 \varphi(x)=-\gamma\frac{(a\lambda x+2a\nu-b\lambda)(ax+b)^{2d+1}}{(\nu+\lambda x)^d}\int_{x_1}^x\frac{(\nu+\lambda u)^d}{(au+b)^{2d}(a\lambda u+2a\nu-b\lambda)^2}\mathrm{d}u,\;x_0<x<x_1,\\\\
  a>0, \mu=0,
 \end{array}\right.
\end{equation}
where
$$\left\{\begin{array}{l}
   -\frac{b}{a}=x_0<  x_1< +\infty\; \mathrm{for}\; \lambda>0\;\mathrm{and}\;-\frac{b}{a}\geq -\frac{\nu}{\lambda}, \\\\
   -\frac{b}{a}=  x_0<x_1\leq-\frac{\nu}{\lambda}\; \mathrm{for}\; \lambda<0\;\mathrm{and}\; -\frac{\nu}{\lambda}>-\frac{b}{a}.
  \end{array}\right.
$$

$(\mathrm{III-2})$ $\widetilde{g}=\frac{1}{f^2}g_F$ is an Einstein  metric (namely $\mathrm{Ric}_{\widetilde{g}}=\mu \;\widetilde{g}$) with the fibre metrics complete at $t=0$ on the domain $ M=\left\{(z,w)\in \Omega\times\mathbb{C}^{r}: 1< e^{\lambda\phi(z)}\|w\|^2\right\}$ if and only if
\begin{equation}\label{ne4.1.4}
 \left\{ \begin{array}{l}
  \mathrm{Ric}_{g_0}=\gamma\;g_0,\\\\
  \gamma=\mu\frac{\nu+\lambda x_1}{(b+ax_1)^2}+\frac{a\lambda x_1+2a\nu-b\lambda}{ax_1+b},\\\\
 \varphi(x)=\frac{(a\lambda x+2a\nu-b\lambda)(ax+b)^{2d+1}}{(\nu+\lambda x)^d}\int_{x_1}^x\frac{(\nu+\lambda u)^d(\mu(\nu+\lambda u)-\gamma (b+au)^2)}{(au+b)^{2d}(a\lambda u+2a\nu-b\lambda)^2}\mathrm{d}u,\;x_0<x<x_1,\\\\
 a>0, \mu<0,
 \end{array}\right.
\end{equation}
where
$$\left\{\begin{array}{l}
   -\frac{b}{a}=x_0<  x_1< +\infty\; \mathrm{for}\; \lambda>0\;\mathrm{and}\;-\frac{b}{a}\geq -\frac{\nu}{\lambda}, \\\\
   -\frac{b}{a}=  x_0<x_1\leq-\frac{\nu}{\lambda}\; \mathrm{for}\; \lambda<0\;\mathrm{and}\; -\frac{\nu}{\lambda}>-\frac{b}{a}.
  \end{array}\right.
$$

$(\mathrm{IV})$ Let $r>1$, $\varphi(x_1)=0$ and $\varphi'(x_1)=-1$.

$(\mathrm{IV-1})$ $\widetilde{g}=\frac{1}{f^2}g_F$ is an Einstein  metric (namely $\mathrm{Ric}_{\widetilde{g}}=\mu \;\widetilde{g}$) with the fibre metrics complete at $t=-\infty$ on the domain $ M=\left\{(z,w)\in \Omega\times\mathbb{C}^{r}: 0< e^{\lambda\phi(z)}\|w\|^2\right\}$ if and only if
\begin{equation}\label{V-1.1}
 \left\{ \begin{array}{l}
  \mathrm{Ric}_{g_0}=r\lambda\;g_0,\\\\
  \varphi(x)= r\frac{(ax+b)^{2n-1}(ax-b)}{x^{n-1}}\int^{x_1}_x \frac{ u^{n-1}}{(au+b)^{2n-2}(au-b)^2}\mathrm{d}u,\;-\frac{b}{a}=x_0<x<x_1=-\frac{(r+1)b}{(r-1)a},\\\\
  a>0, b<0,\lambda>0,\mu=\nu=0.
 \end{array}\right.
\end{equation}

$(\mathrm{IV-2})$ $\widetilde{g}=\frac{1}{f^2}g_F$ is an Einstein  metric (namely $\mathrm{Ric}_{\widetilde{g}}=\mu \;\widetilde{g}$) with the fibre metrics complete at $t=0$ on the domain $ M=\left\{(z,w)\in \Omega\times\mathbb{C}^{r}: 1< e^{\lambda\phi(z)}\|w\|^2\right\}$ if and only if
\begin{equation}\label{V-2.1}
 \left\{ \begin{array}{l}
  \mathrm{Ric}_{g_0}=r\lambda\;g_0,\\\\
   \mu=\frac{(ax_1+b)(r(ax_1+b)-(ax_1-b))}{x_1}<0,\\\\
 \varphi(x)= \frac{(ax+b)^{2n-1}(ax-b)}{x^{n-1}}\int_{x_1}^x \frac{ u^{n-1}\left(\mu u-r(b+au)^2\right)}{(au+b)^{2n}(au-b)^2}\mathrm{d}u,\;-\frac{b}{a}=x_0<x<x_1<-\frac{(r+1)b}{(r-1)a},\\\\
 a>0, b<0,\lambda>0,\nu=0.
 \end{array}\right.
\end{equation}

 }\end{Theorem}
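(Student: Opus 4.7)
The plan is to derive each case by (a) solving explicitly the first-order linear ODE for $\varphi(x)$ obtained in Theorem \ref{Th:4.1}, (b) matching the integration constant and the free parameter $\gamma$ against the appropriate boundary behaviour (smooth extension across the zero section, extension across $\mathbb{P}(E\oplus 1)-E$, or closure-type behaviour at $h(u,u)=1$), and (c) applying the completeness criterion of Theorem \ref{Th:3.2} together with the fibre-completeness reduction noted after it.

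First I would treat the ODE itself. Theorem \ref{Th:4.1} gives \eqref{eq7.3.4}--\eqref{eq7.3.5}, which are first-order linear in $\varphi$. An integrating factor is
\begin{equation*}
I(x)=\frac{(\nu+\lambda x)^{d}\,x^{r-1}}{(ax+b)^{2n-1}(a\lambda x+2a\nu-b\lambda)},
\end{equation*}
so after multiplying and integrating, $\varphi$ is expressed as an explicit integral of the right-hand side against $I(u)^{-1}$, modulo one constant of integration. This is precisely the shape of the formulas claimed in each of \eqref{ne4.1.1}--\eqref{V-2.1}, with the constants $\gamma$ and the base point of integration encoding the two degrees of freedom.

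Next I would fix those two constants case by case. For parts (I), (II-1), (II-2) the base point $x_{0}$ (or $x_{1}$) is the point at which the fibre metric must be complete: one sets $\varphi(x_{i})=0$ so that $\int\sqrt{F''}/f\,dt$ diverges there (using the asymptotic analysis already carried out in the proof of Theorem \ref{Th:3.2}), which pins down the lower limit of the integral and forces $\gamma=\mu(\nu+\lambda x_{i})/(b+ax_{i})^{2}$, yielding the stated constraint on $\mathrm{Ric}_{g_{0}}$. For parts (III-1), (III-2), (IV-1), (IV-2) one works on $\mathbb{P}(E\oplus 1)$ and invokes Lemma 4.1 of \cite{Fu-Yau-Zhou} (cited at the beginning of the proof of Theorem \ref{Th:1.1}) to impose $\varphi(x_{1})=0$ and $\varphi'(x_{1})=-1$ at the divisor; substituting these two conditions into the ODE at $x=x_{1}$ determines $\gamma$ (and in (IV-2) the value of $\mu$) via a short algebraic computation. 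The remaining endpoint is then the one where fibre-completeness is required, and is handled as in (II).

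Finally I would verify the sign/range conditions. Using Remark \ref{Re:2.1}, one needs $\varphi>0$ and $\nu+\lambda x>0$ (plus $x>0$ when $r>1$) on the open interval $(x_{0},x_{1})$; this forces the inequalities $a<0$, $a>0$, $b<0$, $\mu\le 0$, $r=1$ etc.\ listed in each case, together with the endpoint restrictions $x_{1}=-b/a$, $x_{1}=-(r+1)b/((r-1)a)$, etc., by inspecting the signs of the factors $(ax+b)$, $(\nu+\lambda x)$ and $(a\lambda x+2a\nu-b\lambda)$ appearing in the integrand and prefactor. The fibre-completeness at the relevant endpoint is then checked by substituting the explicit $\varphi$ into the integral $\int dx/((ax+b)\sqrt{\varphi})$ of Theorem \ref{Th:3.2} and confirming the required divergence; this is a local Laurent/Taylor expansion computation identical in style to the one performed in the second part of the proof of Theorem \ref{Th:3.2}, and combined with the completeness of $g_{0}$ on $M_{0}$ it yields completeness of $\widetilde{g}$ on $M$ by the remark following Theorem \ref{Th:3.2}. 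The main obstacle, and the only delicate point, is this last bookkeeping step: one must check case by case that at the chosen endpoint $\varphi$ has the correct leading order (simple vanishing for fibre-completeness at a finite $x_{i}$, quadratic vanishing with $\varphi'=-1$ for smooth extension across the divisor, or the prescribed rate at $x_{0}=-\infty$), and that the integral representation yields a strictly positive function on the whole open interval; everything else reduces to routine manipulation of the linear ODE.
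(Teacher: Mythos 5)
Your overall strategy (integrate the first--order ODE of Theorem \ref{Th:4.1}, fix the base point and $\gamma$ from boundary behaviour, then check positivity and fibre completeness) is the same as the paper's, but your dictionary between boundary conditions and geometric requirements is wrong at the decisive point, and this is not a cosmetic slip: the entire case analysis hinges on it. You write that at a finite endpoint one ``sets $\varphi(x_{i})=0$ so that $\int\sqrt{F''}/f\,\mathrm{d}t$ diverges there,'' and later that one needs ``simple vanishing for fibre-completeness at a finite $x_i$, quadratic vanishing with $\varphi'=-1$ for smooth extension across the divisor.'' This is reversed. If $\varphi$ has a simple zero at a finite $x_{i}$ with $ax_{i}+b\neq 0$, then $\int \mathrm{d}x/\bigl((ax+b)\sqrt{\varphi(x)}\bigr)$ behaves like $\int \mathrm{d}x/\sqrt{x-x_{i}}$ and \emph{converges}, so the fibre metric is incomplete; completeness at such an endpoint forces the double zero $\varphi(x_{i})=\varphi'(x_{i})=0$, and it is precisely this second condition $\varphi'(x_{i})=0$ that yields $\gamma=\mu(\nu+\lambda x_{i})/(b+ax_{i})^{2}$ --- a single condition $\varphi(x_{i})=0$ cannot pin down both the integration constant and $\gamma$, as you claim it does. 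Conversely, smooth extension across the divisor at infinity (resp.\ the zero section) is the \emph{simple}-zero condition $\varphi(x_{1})=0$, $\varphi'(x_{1})=-1$ (resp.\ $\varphi(x_{0})=0$, $\varphi'(x_{0})=1$); ``quadratic vanishing with $\varphi'=-1$'' is self-contradictory. The remaining completeness mechanisms --- divergence coming from the factor $1/(ax+b)$ at an endpoint where $f\to 0$ while $\varphi$ stays bounded away from $0$, and the growth restriction at $x=\pm\infty$ that forces $\gamma=0$ --- do not appear in your sketch at all, yet they are what fix $x_{1}=-b/a$ in Part (II) and rule out $\varphi\sim Cx^{d+2}$ in the infinite-endpoint cases.

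Two further ingredients of the paper's argument are missing and are not recoverable from ``routine manipulation.'' First, the a priori restriction $\mu\le 0$ (Bonnet--Myers on the noncompact $M$) and, for $r>1$, the forced values $\nu=0$ and $\lambda>0$ coming from \eqref{eq7.3.5} and Remark \ref{Re:2.1}; without these the case list does not close up. Second, the nonexistence half of each ``if and only if'': the paper eliminates many sign configurations (e.g.\ both endpoints finite with double zeros, or $2a<\lambda$ in Part (I)) by rewriting $\varphi$ via the \emph{second}-order identity \eqref{eq7.1} as a double integral against $\psi(x)=d\gamma(ax+b)^{2}-(d+1)\mu(\nu+\lambda x)$ and deriving a sign contradiction such as $\varphi(x_{1})>0$ versus $\varphi(x_{1})=0$. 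Inspecting the signs of the factors in the first-order integral representation, as you propose, is not enough to exclude these cases. You should repair the boundary-condition dictionary and add these exclusion arguments before the proof can be considered complete.
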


\begin{proof}[Proof]

By the well-known theorems of Bonnet and Myers, it follows that non-compact manifolds do not admit any Einstein metric with positive scalar curvature. Since $M$ is not compact,  we get $\mu\leq 0$. The following proof is divided into four parts.\\

\begin{center}
\textbf{Part $(\mathrm{I})$}\\
\end{center}

 From the metric $g_F$ is smooth at $t=-\infty$, it follows that
\begin{equation*}
  0=x_0=\lim_{t\rightarrow-\infty}x(t)< \sup x(t)=x_1,\;\varphi(0)=0,\;\varphi'(0)=1.
\end{equation*}

Using \eqref{eq7.3.4}, we obtain $\gamma=\lambda+\mu-2a$ and
\begin{equation}\label{2.20}
 \varphi(x)=\frac{(1+ax)^{2d+1}(2a-\lambda+a\lambda x)}{(1+\lambda x)^{d}}m(x),
\end{equation}
where
\begin{equation}\label{2.20.1}
 m(x)= \int_0^x\frac{(1+\lambda u)^{d}
     \left(\mu(1+\lambda u)-(\lambda+\mu-2a)(1+au)^2\right)}{(1+au)^{2d+2}(a\lambda u+2a-\lambda)^2}\mathrm{d}u.
\end{equation}\\

$(\mathrm{I-i})$ For the case of $a=\lambda$.\\

By \eqref{2.20}, we get
\begin{eqnarray*}
 \varphi(x)&=&\frac{1}{a}(1+ax)^{d+2}\int_0^x\frac{\mu-(\lambda+\mu-2a)(1+au)}{(1+au)^{d+3}}\mathrm{d}u\\
   &=&\frac{1}{a^2}(1+ax)^{d+2}\left(\frac{\mu}{d+2}(1-\frac{1}{(1+ax)^{d+2}})-\frac{\lambda+\mu-2a}{d+1}(1-\frac{1}{(1+ax)^{d+1}})\right)  \\
   &=& \frac{1}{a^2}\left(\frac{\mu}{d+2}-\frac{\lambda+\mu-2a}{d+1}\right)(1+ax)^{d+2}+\frac{\lambda+\mu-2a}{(d+1)a^2}(1+ax) -\frac{\mu}{(d+2)a^2}\\
   &=& -\frac{\mu+(d+2)(\lambda-2a)}{(d+2)(d+1)a^2}(1+ax)^{d+2}+\frac{\lambda+\mu-2a}{(d+1)a^2}(1+ax) -\frac{\mu}{(d+2)a^2}\\
   &=& \frac{(d+2)a-\mu}{(d+2)(d+1)a^2}(1+ax)^{d+2}+\frac{\mu-a}{(d+1)a^2}(1+ax) -\frac{\mu}{(d+2)a^2}.
\end{eqnarray*}

If $a>0$, by $\mu\leq 0$, then $\mu-(\lambda+\mu-2a)(1+ax)=a(1+(a-\mu)x)\geq 0$ for $x\geq 0$, thus $\varphi(x)>0$ for $x>0$, so whether $x_1$ is finite or infinite, there is
\begin{equation*}
  \int_{0}^{x_1}\frac{\mathrm{d}x}{(1+ax)\sqrt{\varphi(x)}}<+\infty.
\end{equation*}
This indicates that the fiber metrics for $\widetilde{g}$ are incomplete.

If $a<0$, by $1+ax>0$, then $0<x<-\frac{1}{a}$, thus $\frac{\mu-(\lambda+\mu-2a)(1+ax)}{a}>0$ for $0<x<-\frac{1}{a}$. If $0<x_1<-\frac{1}{a}$, then
\begin{equation*}
  \int_{0}^{x_1}\frac{\mathrm{d}x}{(1+ax)\sqrt{\varphi(x)}}<+\infty,
\end{equation*}
hence $x_1=-\frac{1}{a}$.

From $\varphi(-\frac{1}{a})=-\frac{\mu}{(d+2)a^2}\geq 0$, we have
\begin{equation*}
  \int_{0}^{x_1}\frac{\mathrm{d}x}{(1+ax)\sqrt{\varphi(x)}}=+\infty,
\end{equation*}
This show that fiber metrics are complete.

Let
\begin{equation*}
  t=\int_{-\frac{1}{a}}^{x(t)}\frac{\mathrm{d}v}{\varphi(v)},\;F(t)=\int_{-\frac{1}{a}}^{x(t)}\frac{v\mathrm{d}v}{\varphi(v)}
\end{equation*}
for $\mu<0$, and
\begin{equation*}
  t=\int_{-\frac{1}{2a}}^{x(t)}\frac{\mathrm{d}v}{\varphi(v)},\;F(t)=\int_{-\frac{1}{2a}}^{x(t)}\frac{v\mathrm{d}v}{\varphi(v)}
\end{equation*}
for $\mu=0$.

It follows by $\varphi(0)=0$, $\varphi'(0)=1$ and $\varphi(-\frac{1}{a})=-\frac{\mu}{(d+2)a^2}$ that
\begin{equation*}
  \lim_{x\rightarrow 0^{+}} t=-\infty,\; \lim_{x\rightarrow (-\frac{1}{a})^{-}} t=0
\end{equation*}
for $\mu<0$, and
\begin{equation*}
  \lim_{x\rightarrow 0^{+}} t=-\infty,\; \lim_{x\rightarrow (-\frac{1}{a})^{-}} t=+\infty
\end{equation*}
for $\mu=0$.\\

$(\mathrm{I-ii})$ For the case of $\lambda=2a$.\\

Using
\begin{equation*}
  \gamma=\lambda+\mu-2a=\mu, \mu(1+\lambda x)-\gamma (1+ax)^2=-a^2\mu x^2
\end{equation*}
and \eqref{2.20}, we get
\begin{equation*}
     \varphi(x)=-\frac{\mu}{2}\frac{x(1+ax)^{2d+1}}{(1+\lambda x)^{d}} \int_0^x\frac{(1+\lambda u)^{d}
     }{(1+au)^{2d+2}}\mathrm{d}u,
\end{equation*}
so
\begin{equation*}
  \varphi(0)=\varphi'(0)=0.
\end{equation*}
This conflicts with the smoothness of the metric $\widetilde{g}$ at $t=-\infty$.\\

$(\mathrm{I-iii})$ For the case of $\lambda\neq a$ and $\lambda\neq 2a$.\\

$(\mathrm{I-iii}-1)$ If $\frac{\lambda-2a}{a\lambda}=\frac{1}{a}-\frac{2}{\lambda}<-\frac{1}{\lambda}<-\frac{1}{a}<0$ or $-\frac{1}{a}<-\frac{1}{\lambda}<\frac{1}{a}-\frac{2}{\lambda}<0$, that is $0<\lambda<a$ or $0<a<\lambda<2a$.

By $\mu\leq 0$, then
\begin{equation*}
 \gamma=\lambda+\mu-2a<0,  2a-\lambda+a\lambda x=a\lambda\left(x+\frac{2}{\lambda}-\frac{1}{a}\right)>0, 1+\lambda x>0,1+ax>0
\end{equation*}
for $x\geq 0$.

Let
\begin{equation*}
 q(x)= \mu(1+\lambda x)-\gamma (1+ax)^2.
\end{equation*}
By
\begin{equation*}
 q(0)=\mu-\gamma=2a-\lambda>0,\; q'(x)=\lambda\mu-2a\gamma(1+ax)=(2a-\lambda)(2a-\mu)-2a^2\gamma x>0
\end{equation*}
for $x\geq 0$, it follows that $q(x)>0$ for $x\geq 0$.

Using \eqref{2.20}, we get $\varphi(x)>0$ for $x>0$ and $\varphi(x)\sim Cx^{d+2}$ as $x\rightarrow+\infty$, thus
\begin{equation*}
  \int_{0}^{x_1}\frac{\mathrm{d}x}{(1+ax)\sqrt{\varphi(x)}}<+\infty.
\end{equation*}
This indicates that the fiber metrics for $\widetilde{g}$ are incomplete.\\

$(\mathrm{I-iii}-2)$ If  $\frac{\lambda-2a}{a\lambda}=\frac{1}{a}-\frac{2}{\lambda}<-\frac{1}{\lambda}<0<-\frac{1}{a}$ or $0<-\frac{1}{a}<-\frac{1}{\lambda}<\frac{\lambda-2a}{a\lambda}$, that is $a<0<\lambda$ or $a<\lambda<0$, then
\begin{equation*}
  \lambda-2a-a\lambda x>0
\end{equation*}
for $0\leq x\leq c:=-\frac{1}{a}$.

Let
\begin{equation*}
  q(x)=\mu(1+\lambda x)-(\lambda+\mu-2a)(1+ax)^2.
\end{equation*}

Using $\mu\leq 0$,  $q(0)=2a-\lambda<0$ and $q(c)=\mu(1-\frac{\lambda}{a})\leq 0$, we obtain:

For $\gamma=\lambda+\mu-2a=0$, $q(x)\leq 0$ for $x\in (0,c)$;

For $\gamma=\lambda+\mu-2a<0$,  an equation $q(x)=0$  has no root in the interval $(0, c)$, so $q(x)\leq 0$ for  $x\in(0,c)$;

For $\gamma=\lambda+\mu-2a>0$, $\mu(1+ax)\leq 0$ and $\gamma(1+ax)^2>0$ for $x\in(0,c)$,  therefore $q(x)\leq 0$ for  $x\in(0,c)$.

By \eqref{2.20}, the above indicates that $\varphi(x)> 0$ for  $x\in(0,c)$.

Since $1+ax>0$, it follows that $0\leq x<-\frac{1}{a}$, so $0<x_1\leq -\frac{1}{a}$. By the fiber metrics for $\widetilde{g}$ are complete, that is
\begin{equation*}
  \int_{0}^{x_1}\frac{\mathrm{d}x}{(1+ax)\sqrt{\varphi(x)}}=+\infty,
\end{equation*}
we have $x_1=-\frac{1}{a}$.

For $\mu<0$,
\begin{eqnarray*}
 \lim_{x\rightarrow c^{-}} \varphi(x)&=& \frac{2(a-\lambda)}{(1-\frac{\lambda}{a})^d} \lim_{x\rightarrow c^{-}}
 \frac{\int_0^x\frac{(1+\lambda u)^{d}\left(\mu(1+\lambda u)-(\lambda+\mu-2a)(1+au)^2\right)}{(1+au)^{2d+2}(a\lambda u+2a-\lambda)^2}\mathrm{d}u}{(1+ax)^{-(2d+1)}} \\
   &=&\frac{2(a-\lambda)}{(1-\frac{\lambda}{a})^{d}}\frac{(1-\frac{\lambda}{a} )^{d+1}\mu}{(2a-2\lambda)^2}\frac{1}{-(2d+1)a}  \\
   &=& \frac{-\mu}{2(2d+1)a^2}>0.
\end{eqnarray*}
Let
\begin{equation*}
  t=\int_{c}^{x(t)}\frac{\mathrm{d}v}{\varphi(v)},\;F(t)=\int_{c}^{x(t)}\frac{v\mathrm{d}v}{\varphi(v)},
\end{equation*}
it follows that
\begin{equation*}
  \lim_{x\rightarrow 0^{+}} t=-\infty,\; \lim_{x\rightarrow c^{-}} t=0,
\end{equation*}
which implies that the $g_F$ is defined on $M=\left\{(z,w)\in \Omega\times\mathbb{C}^{r}: e^{\lambda\phi(z)}\|w\|^2<1\right\}$. Using
\begin{equation*}
\int_{0}^{c}\frac{1}{(1+ax)\sqrt{\varphi(x)}}\mathrm{d}x=+\infty,
\end{equation*}
we obtain that the fiber metrics of $\widetilde{g}$ is complete on $M=\left\{(z,w)\in \Omega\times\mathbb{C}^{r}: e^{\lambda\phi(z)}\|w\|^2<1\right\}$.

For $\mu=0$,
\begin{eqnarray*}
 \lim_{x\rightarrow c^{-}} \frac{\varphi(x)}{(1+ax)^2} &=& \frac{2(a-\lambda)}{(1-\frac{\lambda}{a})^d} \lim_{x\rightarrow c^{-}}
 \frac{\int_0^x\frac{(1+\lambda u)^{d}(2a-\lambda)}{(1+au)^{2d}(a\lambda u+2a-\lambda)^2}\mathrm{d}u}{(1+ax)^{-(2d-1)}} \\
   &=&\frac{2(a-\lambda)}{(1-\frac{\lambda}{a})^d}\frac{(1-\frac{\lambda}{a} )^{d}(2a-\lambda)}{(2a-2\lambda)^2}\frac{1}{-(2d-1)a}  \\
   &=& \frac{\lambda-2a}{2(2d-1)a^2(1-\frac{\lambda}{a})}>0.
\end{eqnarray*}

Let
\begin{equation*}
  t=\int_{\frac{c}{2}}^{x(t)}\frac{\mathrm{d}v}{\varphi(v)},\;F(t)=\int_{\frac{c}{2}}^{x(t)}\frac{v\mathrm{d}v}{\varphi(v)},
\end{equation*}
it follows that
\begin{equation*}
  \lim_{x\rightarrow 0^{+}} t=-\infty,\; \lim_{x\rightarrow c^{-}} t=+\infty.
\end{equation*}
 Using
\begin{equation*}
\int_{\frac{c}{2}}^{c}\frac{1}{(1+ax)\sqrt{\varphi(x)}}\mathrm{d}x=+\infty,
\end{equation*}
we obtain that the fiber metrics of $\widetilde{g}$ is complete on $M=\Omega\times \mathbb{C}^{r}$.\\

$(\mathrm{I-iii}-3)$ If  $\frac{\lambda-2a}{a\lambda}=\frac{1}{a}-\frac{2}{\lambda}< 0<-\frac{1}{\lambda}<-\frac{1}{a}$ or $-\frac{1}{a}<0<-\frac{1}{\lambda}<\frac{1}{a}-\frac{2}{\lambda}$, then $\lambda<2a<0$ or $\lambda<0<a$.

 By $1+ax>0$ and $1+\lambda x>0$, we have $0\leq x<-\frac{1}{\lambda}$, so $0<x_1\leq c:=-\frac{1}{\lambda}$.

 Since $\varphi(x)>0$ for $0=x_0<x<x_1$, by \eqref{2.20} and \eqref{2.20.1}, thus $m(x_1)\geq 0$.

When $x_1=-\frac{1}{\lambda}$ and
\begin{equation*}
 m(x_1)= \int_0^{x_1}\frac{(1+\lambda u)^{d}
     \left(\mu(1+\lambda u)-(\lambda+\mu-2a)(1+au)^2\right)}{(1+au)^{2d+2}(a\lambda u+2a-\lambda)^2}\mathrm{d}u=0,
\end{equation*}
from \eqref{2.20}, we get
\begin{equation*}
  \lim_{x\rightarrow x_1^{-}}\frac{\varphi(x)}{1+\lambda x}=\frac{\mu+\lambda-2a}{(d+1)\lambda^2}<0.
\end{equation*}
This conflicts with $\varphi(x)>0$ for $0<x<x_1$.

When $x_1<-\frac{1}{\lambda}$, or $x_1=-\frac{1}{\lambda}$ and $m(x_1)>0$, by \eqref{2.20} we have
\begin{equation*}
  \int_{0}^{x_1}\frac{\mathrm{d}x}{(1+ax)\sqrt{\varphi(x)}}<+\infty.
\end{equation*}
This indicates that the fiber metrics for $\widetilde{g}$ are incomplete.\\

$(\mathrm{I-iii}-4)$ If  $0<\frac{\lambda-2a}{a\lambda}=\frac{1}{a}-\frac{2}{\lambda}< -\frac{1}{\lambda}<-\frac{1}{a}$ or $-\frac{1}{a}<-\frac{1}{\lambda}<0<\frac{\lambda-2a}{a\lambda}$, then $2a< \lambda<a<0$ or $0<2a<\lambda$. Since $1+ax>0$ and $1+\lambda x>0$ for $0=x_0<x<x_1$, it follows that $0<x_1\leq -\frac{1}{\lambda}$ for $2a< \lambda<a<0$.

Let
\begin{equation*}
\psi(x)= d\gamma(1+ax)^2-(d+1)\mu(1+\lambda x).
\end{equation*}

If $x_1=+\infty$, using $\varphi(x)\sim Cx^{d+2}$ as $x\rightarrow+\infty$, then
\begin{equation*}
   \int_{0}^{x_1}\frac{\mathrm{d}x}{(1+ax)\sqrt{\varphi(x)}}<+\infty,
\end{equation*}
this is in contradiction with the completeness of the metric $\widetilde{g}$.

If $x_1<+\infty$, from the completeness of $\widetilde{g}$, we get $\varphi(x_1)=\varphi'(x_1)=0$, thus by \eqref{eq7.1} and \eqref{eq7.3.4}, we have
\begin{equation}\label{2.20-1}
  \varphi(x)=\frac{(1+ax)^{2d+1}}{(1+\lambda x)^d}\int_x^{x_1}(u-x)\frac{(1+\lambda u)^{d-1}\psi(u)}{(1+au)^{2d+3}}\mathrm{d}u \;(\text{by}\;\varphi(x_1)=0\;\text{and}\;\varphi'(x_1)=0)
\end{equation}
and
\begin{equation*}
  \mu(1+\lambda x_1)-\gamma(1+ax_1)^2=0  \;(\text{by}\;\varphi(x_1)=0\;\text{and}\;\varphi'(x_1)=0).
\end{equation*}
Then
\begin{equation*}
  \gamma=\frac{\mu(1+\lambda x_1)}{(1+ax_1)^2}\leq 0
\end{equation*}
and
\begin{equation*}
  \psi(x)= \mu\left\{d\frac{(1+\lambda x_1)}{(1+ax_1)^2}(1+ax)^2-(d+1)(1+\lambda x)\right\},
\end{equation*}
which implies that $$\psi(x_1)=-\mu(1+\lambda x_1)\geq 0.$$By $\mu\leq 0$, $\gamma=\lambda+\mu-2a$ and $\lambda-2a>0$, we have
\begin{equation*}
  \psi(0)=d\gamma-(d+1)\mu=d(\lambda-2a)-\mu>0,
\end{equation*}
thus $\psi(x)\geq 0$ for $0\leq x\leq x_1$. So from  \eqref{2.20-1}, we have $\varphi(0)>0$, this conflicts with $\varphi(0)=0$.

\newpage
\begin{center}
\textbf{Part $(\mathrm{II})$}
\end{center}

\noindent\textbf{Part $(\mathrm{II})$ for $r=1$}\\

$(\mathrm{II-i})$ For the case of $a<0$ and $\lambda<0$.\\

By $ax+b>0$ and $\nu+\lambda x>0$, we get $x<\min\{-\frac{b}{a},-\frac{\nu}{\lambda}\}$, so $x_1\leq \min\{-\frac{b}{a},-\frac{\nu}{\lambda}\}$.

For $x_0>-\infty$, according to the completeness of the fibre metric for $\widetilde{g}$, namely
\begin{equation*}
  \int_{x_0}^{\frac{x_0+x_1}{2}}\frac{1}{(b+ax)\sqrt{\varphi(x)}}\mathrm{d}x=+\infty,
\end{equation*}
we have $\varphi(x_0)=\varphi'(x_0)=0$. Then by \eqref{eq7.3.4}, we obtain $\gamma=\frac{\mu(\nu+\lambda x_0)}{(b+ax_0)^2}$ and
\begin{equation}\label{4.16-1}
    \varphi(x)= -\frac{(b+ax)^{2d+1}(b\lambda-2a\nu-a\lambda x)}{(\nu+\lambda x)^{d}}\int_{x_0}^x\frac{(\nu+\lambda u)^{d}\left(\mu(\nu+\lambda u)-\gamma(b+au)^2\right)}{(b+au)^{2d+2}(b\lambda-2a\nu-a\lambda u)^2}\mathrm{d}u
\end{equation}
for $x\in(x_0,x_1)$.\\

$(\mathrm{II-i}-1)$ If $-\frac{b}{a}\leq-\frac{\nu}{\lambda}\leq\frac{b}{a}-\frac{2\nu}{\lambda}$.

 For  $-\infty<x_0<x<x_1\leq c:=-\frac{b}{a}$, using
\begin{equation*}
  b\lambda-2a\nu-a\lambda x>0, \frac{{\nu+\lambda x_0}}{(b+ax_0)^2}>0,\;{\nu+\lambda x}-\frac{{\nu+\lambda x_0}}{(b+ax_0)^2}(b+ax)^2>0,
\end{equation*}
we have
\begin{equation*}
  \mu(\nu+\lambda x)-\gamma(b+ax)^2=\mu\left({\nu+\lambda x}-\frac{{\nu+\lambda x_0}}{(b+ax_0)^2}(b+ax)^2\right)<0
\end{equation*}
for  $x_0<x<x_1\leq c$ only if $\mu<0$. So $\varphi(x)>0$ for $x\in(x_0,x_1)$ only if $\mu<0$.

Using
\begin{equation*}
   \lim_{x\rightarrow c^{-}} \varphi(x)=\left\{\begin{array}{l}
                                                 \frac{-\mu}{2(2d+1)a^2}>0\;(\text{for}\;-\frac{b}{a}<-\frac{\nu}{\lambda}), \\\\
                                                 \frac{-\mu}{(d+2)a^2}>0\;(\text{for}\;-\frac{b}{a}=-\frac{\nu}{\lambda})
                                               \end{array}
   \right. \;(\text{by}\;\eqref{eq7.3.4})
\end{equation*}
and fiber metrics for $\widetilde{g}$ are complete, we have $x_1=c$.

Let
\begin{equation*}
  t=\int_{c}^{x(t)}\frac{\mathrm{d}u}{\varphi(u)},\;F(t)=\int_{c}^{x(t)}\frac{v\mathrm{d}v}{\varphi(v)},
\end{equation*}
by
\begin{equation*}
  \lim_{x\rightarrow x_0^{+}}\frac{\varphi(x)}{(x-x_0)^2}=-\frac {\mu}{2(b+ax_0)^2}\;(\text{by}\;\eqref{4.16-1}),
\end{equation*}
then
\begin{equation*}
  \lim_{x\rightarrow x_0^{+}}t(x)=-\infty, \;\lim_{x\rightarrow c^{-}}t(x)=0.
\end{equation*}
This shows that the metric $g_F$ is defined on $ M=\left\{(z,w)\in \Omega\times\mathbb{C}^{r}: 0< e^{\lambda\phi(z)}\|w\|^2<1\right\}$. Since
\begin{equation*}
\int_{x_0}^{\frac{x_0+x_1}{2}}\frac{1}{(b+ax)\sqrt{\varphi(x)}}\mathrm{d}x=+\infty,\;\int_{\frac{x_0+x_1}{2}}^{x_1}\frac{1}{(b+ax)\sqrt{\varphi(x)}}\mathrm{d}x=+\infty,
\end{equation*}
it follows that any fiber metric for $\widetilde{g}$ is complete.

For $x_0=-\infty$, from \eqref{eq7.3.4}, we obtain
\begin{equation*}
  \frac{(\nu+\lambda x)^d\varphi(x)}{(ax+b)^{2d+1}(a\lambda x+2a\nu-b\lambda)}
  =C_1+\int_{\widetilde{x_0}}^x  \frac{(\nu+\lambda u)^d\left(\mu(\nu+\lambda u)-\gamma (au+b)^2\right)}{(au+b)^{2d+2}(a\lambda u+2a\nu-b\lambda)^2}\mathrm{d}u,
\end{equation*}
where
\begin{equation*}
  \widetilde{x_0}:=x_1-1,C_1= \frac{(\nu+\lambda \widetilde{x_0})^d\varphi(\widetilde{x_0})}{(a\widetilde{x_0}+b)^{2d+1}(a\lambda \widetilde{x_0}+2a\nu-b\lambda)}.
\end{equation*}
Since
\begin{equation*}
 -\infty <C_1+\int_{\widetilde{x_0}}^{-\infty}  \frac{(\nu+\lambda u)^d\left(\mu(\nu+\lambda u)-\gamma (au+b)^2\right)}{(au+b)^{2d+2}(a\lambda u+2a\nu-b\lambda)^2}\mathrm{d}u<+\infty
\end{equation*}
and $a\lambda x+2a\nu-b\lambda<0$ for $-\infty<x<x_1$, it follows that
\begin{equation*}
  \lim_{x\rightarrow-\infty} \frac{(\nu+\lambda x)^d\varphi(x)}{(ax+b)^{2d+1}(a\lambda x+2a\nu-b\lambda)}=C\leq 0.
\end{equation*}
If $C<0$, then $\varphi(x)\sim C_2x^{d+2}$ as $x\rightarrow -\infty$, this is in contradiction with
\begin{equation*}
  \int_{-\infty}^{\widetilde{x_0}}\frac{\mathrm{d}x}{(b+ax)\sqrt{\varphi(x)}}=+\infty,
\end{equation*}
thus $C=0$, so
\begin{equation}\label{2.20.1.1}
     \varphi(x)=\frac{(b+ax)^{2d+1}(a\lambda x+2a\nu-b\lambda)}{(\nu+\lambda x)^{d}}
     \int_{-\infty}^x\frac{(\nu+\lambda u)^{d}\left(\mu(\nu+\lambda u)-\gamma(b+au)^2\right)}{(b+au)^{2d+2}(a\lambda u+2a\nu-b\lambda)^2}\mathrm{d}u
\end{equation}
for $x\in(-\infty,x_1)$.

As $x\rightarrow-\infty,$ by
\begin{equation*}
 -\frac{(b+ax)^{2d+1}(b\lambda-2a\nu-a\lambda x)}{(\nu+\lambda x)^{d}} \sim \frac{a^{2d+2}}{\lambda^{d-1}}x^{d+2},
\end{equation*}

\begin{equation*}
  \frac{(\nu+\lambda x)^{d}\left(\mu(\nu+\lambda x)-\gamma(b+ax)^2\right)}{(b+ax)^{2d+2}(a\lambda x+2a\nu-b\lambda)^2}\sim\left\{\begin{array}{l}
                                                                                                                        -\frac{\lambda^{d-2}\gamma}{a^{2d+2}}\frac{1}{x^{d+2}},   \gamma\neq 0, \\\\
                                                                                                                         \frac{\lambda^{d-1}\mu}{a^{2d+4}}\frac{1}{x^{d+3}},   \gamma=0
                                                                                                                         \end{array}
  \right.
\end{equation*}
and \eqref{2.20.1.1}, we get
\begin{equation*}
 \lim_{x\rightarrow-\infty}\frac{\varphi(x)}{x}=\frac{\gamma}{(d+1)\lambda}\;(\gamma\neq 0)
\end{equation*}
and
\begin{equation*}
 \lim_{x\rightarrow-\infty}\varphi(x)=-\frac{\mu}{(d+2)a^2}\;(\gamma=0).
\end{equation*}
By
\begin{equation*}
  \int_{-\infty}^{\widetilde{x_0}}\frac{\mathrm{d}x}{(b+ax)\sqrt{\varphi(x)}}=+\infty,
\end{equation*}
it follows that $\gamma=0$. From \eqref{2.20.1.1}, so $\varphi(x)>0$ for $x\in(-\infty,c)$ only $\mu<0$.

If $x_1<c$, then
\begin{equation*}
  \int_{x_1-1}^{x_1}\frac{1}{(b+ax)\sqrt{\varphi(x)}}\mathrm{d}x<+\infty,
\end{equation*}
according to the completeness of fiber metrics for $\widetilde{g}$, we get $x_1=c$

Let
\begin{equation*}
  t=\int_{c}^{x(t)}\frac{\mathrm{d}u}{\varphi(u)},\;F(t)=\int_{c}^{x(t)}\frac{v\mathrm{d}v}{\varphi(v)},
\end{equation*}
then
\begin{equation*}
  \lim_{x\rightarrow -\infty}t(x)=-\infty, \;\lim_{x\rightarrow c^{-}}t(x)=0.
\end{equation*}
This shows that the metric $g_F$ is defined on $ M=\left\{(z,w)\in \Omega\times\mathbb{C}^{r}: 0<e^{\lambda\phi(z)} \|w\|^2<1\right\}$. Since
\begin{equation*}
\int_{-\infty}^{c-1}\frac{1}{(b+ax)\sqrt{\varphi(x)}}\mathrm{d}x=+\infty,\;\int_{c-1}^{c}\frac{1}{(b+ax)\sqrt{\varphi(x)}}\mathrm{d}x=+\infty,
\end{equation*}
it follows that any fiber metric of $\widetilde{g}$ is complete.\\

$(\mathrm{II-i}-2)$ If $\frac{b}{a}-\frac{2\nu}{\lambda}<-\frac{\nu}{\lambda}<-\frac{b}{a}$, then $ x_0<x_1\leq-\frac{\nu}{\lambda}<-\frac{b}{a}$. The following we prove that this situation is impossible.

If $\frac{b}{a}-\frac{2\nu}{\lambda}= x_0<x_1\leq-\frac{\nu}{\lambda}<-\frac{b}{a}$ , by $\varphi(x_0)=\varphi'(x_0)=0$ and $\eqref{eq7.3.4}$, we get
\begin{equation*}
  \gamma=-{\frac {\mu\,{\lambda}^{2}}{4a \left( a\nu-b\lambda \right) }},\;\mu(\nu+\lambda x)-\gamma(b+ax)^2=-a^2\gamma(x-x_0)^2.
\end{equation*}
Thus
\begin{equation*}
  \varphi(x)= -\frac{a\gamma}{\lambda}\frac{(b+ax)^{2d+1}(x-x_0)}{(\nu+\lambda x)^{d}}\int_{x_0}^x\frac{(\nu+\lambda u)^{d}}{(b+au)^{2d+2}}\mathrm{d}u
\end{equation*}
and
\begin{equation*}
  \int_{\frac{x_0+x_1}{2}}^{x_1}\frac{1}{(b+ax)\sqrt{\varphi(x)}}\mathrm{d}x<+\infty.
\end{equation*}
This is in contradiction with the completeness of fiber metrics of $\widetilde{g}$.

If $-\infty< x_0<x_1\leq-\frac{\nu}{\lambda}<-\frac{b}{a}$ and $x_0\neq \frac{b}{a}-\frac{2\nu}{\lambda}$,   by the completeness of fiber metrics of $\widetilde{g}$, we get $\varphi(x_0)=\varphi'(x_0)=\varphi(x_1)=\varphi'(x_1)=0$, so
\begin{equation*}
 \gamma=\frac{\mu(\nu+\lambda x_0)}{(b+ax_0)^2}=\frac{\mu(\nu+\lambda x_1)}{(b+ax_1)^2}\leq 0,
\end{equation*}
\begin{equation*}
  \mu(\nu+\lambda x)-\gamma(ax+b)^2=a^2\gamma(x-x_0)(x_1-x)
\end{equation*}
and
\begin{equation*}
\psi(x):= d\gamma(ax+b)^2-(d+1)\mu(\nu+\lambda x)=-da^2\gamma(x-x_0)(x_1-x)-\mu(\nu+\lambda x)\geq 0
\end{equation*}
for $x_0<x<x_1$. Using \eqref{eq7.1}, we have
\begin{equation*}
  \varphi(x)=\frac{(b+ax)^{2d+1}}{(\nu+\lambda x)^d}\int^x_{x_0}(x-u)\frac{(\nu+\lambda u)^{d-1}\psi(u)}{(b+au)^{2d+3}}\mathrm{d}u \;(\text{by}\;\varphi(x_0)=0\;\text{and}\;\varphi'(x_0)=0),
\end{equation*}
which implies that $\varphi(x_1)>0$, this is in contradiction with the completeness of $\widetilde{g}$.

If $x_0=-\infty$, by $\eqref{2.20.1.1}$, we have $\gamma=0$ and $\mu<0$. By $\varphi(x_1)=0, \varphi'(x_1)=0$ and \eqref{eq7.1}, we have
\begin{equation*}
  \varphi(x)=\frac{(b+ax)^{2d+1}}{(\nu+\lambda x)^d}\int^x_{x_1}(x-u)\frac{(\nu+\lambda u)^{d-1}\psi(u)}{(b+au)^{2d+3}}\mathrm{d}u
\end{equation*}
which implies that $\varphi(x)\sim C_3x^{d+2}$ as $x\rightarrow-\infty$, this is in contradiction with the completeness of $\widetilde{g}$.\\

$(\mathrm{II-ii})$  For the case of $a>0$ and $\lambda>0$.\\

By $ax+b>0$ and $\nu+\lambda x>0$, we get $x>\max\{-\frac{b}{a},-\frac{\nu}{\lambda}\}$, so $x_1>x_0\geq \max\{-\frac{b}{a},-\frac{\nu}{\lambda}\}$.

If $x_1<+\infty$, by
\begin{equation*}
  \int_{\frac{x_0+x_1}{2}}^{x_1}\frac{1}{(b+ax)\sqrt{\varphi(x)}}\mathrm{d}x= +\infty,
\end{equation*}
we get $\varphi(x_1)=\varphi'(x_1)=0$. Then by \eqref{eq7.3.4}, we obtain $\gamma=\frac{\mu(\nu+\lambda x_1)}{(b+ax_1)^2}$ and
\begin{equation}\label{4.30}
    \varphi(x)= \frac{(b+ax)^{2d+1}(a\lambda x+2a\nu-b\lambda)}{(\nu+\lambda x)^{d}}\int_{x_1}^x\frac{(\nu+\lambda u)^{d}\left(\mu(\nu+\lambda u)-\gamma(b+au)^2\right)}{(b+au)^{2d+2}(b\lambda-2a\nu-a\lambda u)^2}\mathrm{d}u
\end{equation}
for $x\in(x_0,x_1)$. It follows from $\varphi\neq 0$ that $\mu<0$.\\

$(\mathrm{II-ii}-1)$ If $-\frac{b}{a}\geq-\frac{\nu}{\lambda}\geq\frac{b}{a}-\frac{2\nu}{\lambda}$, then
\begin{equation*}
  a\lambda x+2a\nu-b\lambda>0, \frac{{\nu+\lambda x_1}}{(b+ax_1)^2}>0,\;{\nu+\lambda x}-\frac{{\nu+\lambda x_1}}{(b+ax_1)^2}(b+ax)^2>0
\end{equation*}
for $c:=-\frac{b}{a}\leq x_0<x<x_1<+\infty$. Thus
\begin{equation*}
  \mu(\nu+\lambda x)-\gamma(b+ax)^2=\mu\left({\nu+\lambda x}-\frac{{\nu+\lambda x_0}}{(b+ax_0)^2}(b+ax)^2\right)<0,
\end{equation*}
so $\varphi(x)>0$ for $c<x<x_1$ from \eqref{4.30}.

Using
\begin{equation*}
   \lim_{x\rightarrow c^{+}} \varphi(x)=\left\{\begin{array}{l}
                                                 \frac{-\mu}{2(2d+1)a^2}>0\;(\text{for}\;-\frac{b}{a}<-\frac{\nu}{\lambda}), \\\\
                                                 \frac{-\mu}{(d+2)a^2}>0\;(\text{for}\;-\frac{b}{a}=-\frac{\nu}{\lambda})
                                               \end{array}
   \right. \;(\text{by}\;\eqref{eq7.3.4})
\end{equation*}
and fiber metrics for $\widetilde{g}$ are complete, we have $x_0=c$.

Let
\begin{equation*}
  t=\int_{c}^{x(t)}\frac{\mathrm{d}u}{\varphi(u)},\;F(t)=\int_{c}^{x(t)}\frac{v\mathrm{d}v}{\varphi(v)},
\end{equation*}
by
\begin{equation*}
  \lim_{x\rightarrow x_1^{-}}\frac{\varphi(x)}{(x-x_1)^2}=-\frac {\mu}{2(b+ax_1)^2},
\end{equation*}
then
\begin{equation*}
  \lim_{x\rightarrow x_1^{-}}t(x)=+\infty, \;\lim_{x\rightarrow c^{+}}t(x)=0.
\end{equation*}
This shows that the metric $g_F$ is defined on $ M=\left\{(z,w)\in \Omega\times\mathbb{C}^{r}: 1< e^{\lambda\phi(z)}\|w\|^2\right\}$. Since
\begin{equation*}
\int_{x_0}^{\frac{x_0+x_1}{2}}\frac{1}{(b+ax)\sqrt{\varphi(x)}}\mathrm{d}x=+\infty,\;\int_{\frac{x_0+x_1}{2}}^{x_1}\frac{1}{(b+ax)\sqrt{\varphi(x)}}\mathrm{d}x=+\infty,
\end{equation*}
it follows that any fiber metric for $\widetilde{g}$ is complete.

For $x_1=+\infty$, from \eqref{eq7.3.4}, we obtain
\begin{equation*}
  \frac{(\nu+\lambda x)^d\varphi(x)}{(ax+b)^{2d+1}(a\lambda x+2a\nu-b\lambda)}
  =C_1+\int_{\widetilde{x_0}}^x  \frac{(\nu+\lambda u)^d\left(\mu(\nu+\lambda u)-\gamma (au+b)^2\right)}{(au+b)^{2d+2}(a\lambda u+2a\nu-b\lambda)^2}\mathrm{d}u,
\end{equation*}
where
\begin{equation*}
  \widetilde{x_0}:=x_0+1,C_1= \frac{(\nu+\lambda \widetilde{x_0})^d\varphi(\widetilde{x_0})}{(a\widetilde{x_0}+b)^{2d+1}(a\lambda \widetilde{x_0}+2a\nu-b\lambda)}.
\end{equation*}
Since
\begin{equation*}
 -\infty <C_1+\int_{\widetilde{x_0}}^{+\infty}  \frac{(\nu+\lambda u)^d\left(\mu(\nu+\lambda u)-\gamma (au+b)^2\right)}{(au+b)^{2d+2}(a\lambda u+2a\nu-b\lambda)^2}\mathrm{d}u<+\infty,
\end{equation*}
it follows that
\begin{equation*}
  \lim_{x\rightarrow+\infty} \frac{(\nu+\lambda x)^d\varphi(x)}{(ax+b)^{2d+1}(a\lambda x+2a\nu-b\lambda)}=C\geq 0.
\end{equation*}
If $C>0$, then $\varphi(x)\sim C_4x^{d+2}$ ($C_4>0$) as $x\rightarrow +\infty$, this is in contradiction with
\begin{equation*}
  \int^{+\infty}_{\widetilde{x_0}}\frac{\mathrm{d}x}{(b+ax)\sqrt{\varphi(x)}}=+\infty,
\end{equation*}
thus $C=0$, so
\begin{equation}\label{4.31}
     \varphi(x)=\frac{(b+ax)^{2d+1}(a\lambda x+2a\nu-b\lambda)}{(\nu+\lambda x)^{d}}
     \int_{+\infty}^x\frac{(\nu+\lambda u)^{d}\left(\mu(\nu+\lambda u)-\gamma(b+au)^2\right)}{(b+au)^{2d+2}(a\lambda u+2a\nu-b\lambda)^2}\mathrm{d}u
\end{equation}
for $x\in(x_0,+\infty)$.

As $x\rightarrow+\infty,$
\begin{equation*}
 \frac{(b+ax)^{2d+1}(a\lambda x+2a\nu-b\lambda)}{(\nu+\lambda x)^{d}} \sim \frac{a^{2d+2}}{\lambda^{d-1}}x^{d+2}
\end{equation*}
and
\begin{equation*}
  \frac{(\nu+\lambda x)^{d}\left(\mu(\nu+\lambda x)-\gamma(b+ax)^2\right)}{(b+ax)^{2d+2}(a\lambda x+2a\nu-b\lambda)^2}\sim\left\{\begin{array}{l}
                                                                                                                        -\frac{\lambda^{d-2}\gamma}{a^{2d+2}}\frac{1}{x^{d+2}},   \gamma\neq 0, \\\\
                                                                                                                         \frac{\lambda^{d-1}\mu}{a^{2d+4}}\frac{1}{x^{d+3}},   \gamma=0,
                                                                                                                         \end{array}
  \right.
\end{equation*}
it follows from \eqref{4.31} that
\begin{equation*}
 \lim_{x\rightarrow+\infty}\frac{\varphi(x)}{x}=\frac{\gamma}{(d+1)\lambda}\;(\gamma\neq 0)
\end{equation*}
and
\begin{equation*}
 \lim_{x\rightarrow+\infty}\varphi(x)=-\frac{\mu}{(d+2)a^2}\;(\gamma=0).
\end{equation*}
By
\begin{equation*}
  \int^{+\infty}_{\widetilde{x_0}}\frac{\mathrm{d}x}{(b+ax)\sqrt{\varphi(x)}}=+\infty,
\end{equation*}
it follows that $\gamma=0$, then $\varphi(x)>0$ for $x\in(x_0,+\infty)$ only $\mu<0$ from \eqref{4.31}.

If $x_0>c$, then
\begin{equation*}
  \int^{x_0+1}_{x_0}\frac{1}{(b+ax)\sqrt{\varphi(x)}}\mathrm{d}x<+\infty,
\end{equation*}
according to the completeness of fiber metrics for $\widetilde{g}$, we get $x_0=c$

Let
\begin{equation*}
  t=\int_{c}^{x(t)}\frac{\mathrm{d}u}{\varphi(u)},\;F(t)=\int_{c}^{x(t)}\frac{v\mathrm{d}v}{\varphi(v)},
\end{equation*}
then
\begin{equation*}
  \lim_{x\rightarrow +\infty}t(x)=+\infty, \;\lim_{x\rightarrow c^{+}}t(x)=0.
\end{equation*}
This shows that the metric $g_F$ is defined on $ M=\left\{(z,w)\in \Omega\times\mathbb{C}^{r}: 1< e^{\lambda\phi(z)}\|w\|^2\right\}$. Since
\begin{equation*}
\int^{+\infty}_{c+1}\frac{1}{(b+ax)\sqrt{\varphi(x)}}\mathrm{d}x=+\infty,\;\int^{c+1}_{c}\frac{1}{(b+ax)\sqrt{\varphi(x)}}\mathrm{d}x=+\infty,
\end{equation*}
it follows that any fiber metric of $\widetilde{g}$ is complete.\\

$(\mathrm{II-ii}-2)$ If $\frac{b}{a}-\frac{2\nu}{\lambda}>-\frac{\nu}{\lambda}>-\frac{b}{a}$, then $ x_1>x_0\geq-\frac{\nu}{\lambda}>-\frac{b}{a}$. We prove that this situation is impossible.

If $\frac{b}{a}-\frac{2\nu}{\lambda}= x_1>x_0\geq-\frac{\nu}{\lambda}>-\frac{b}{a}$ , by $\varphi(x_1)=\varphi'(x_1)=0$ and $\eqref{eq7.3.4}$, we get
\begin{equation*}
  \gamma=-{\frac {\mu\,{\lambda}^{2}}{4a \left( a\nu-b\lambda \right) }},\;\mu(\nu+\lambda x)-\gamma(b+ax)^2=-a^2\gamma(x-x_1)^2.
\end{equation*}
Thus
\begin{equation*}
  \varphi(x)= \frac{a\gamma}{\lambda}\frac{(b+ax)^{2d+1}(x_1-x)}{(\nu+\lambda x)^{d}}\int_{x_1}^x\frac{(\nu+\lambda u)^{d}}{(b+au)^{2d+2}}\mathrm{d}u
\end{equation*}
and
\begin{equation*}
  \int^{\frac{x_0+x_1}{2}}_{x_0}\frac{1}{(b+ax)\sqrt{\varphi(x)}}\mathrm{d}x<+\infty.
\end{equation*}
This is in contradiction with the completeness of fiber metrics of $\widetilde{g}$.

If $-\frac{b}{a}<-\frac{\nu}{\lambda}\leq x_0<x_1<+\infty$ and $x_1\neq \frac{b}{a}-\frac{2\nu}{\lambda}$,   by the completeness of fiber metrics of $\widetilde{g}$, we get $\varphi(x_0)=\varphi'(x_0)=\varphi(x_1)=\varphi'(x_1)=0$, so
\begin{equation*}
 \gamma=\frac{\mu(\nu+\lambda x_0)}{(b+ax_0)^2}=\frac{\mu(\nu+\lambda x_1)}{(b+ax_1)^2}\leq 0,
\end{equation*}
\begin{equation*}
  \mu(\nu+\lambda x)-\gamma(ax+b)^2=a^2\gamma(x-x_0)(x_1-x)
\end{equation*}
and
\begin{equation*}
\psi(x):= d\gamma(ax+b)^2-(d+1)\mu(\nu+\lambda x)=-da^2\gamma(x-x_0)(x_1-x)-\mu(\nu+\lambda x)\geq 0
\end{equation*}
for $x_0<x<x_1$. Using \eqref{eq7.1}, we have
\begin{equation*}
  \varphi(x)=\frac{(b+ax)^{2d+1}}{(\nu+\lambda x)^d}\int^x_{x_1}(x-u)\frac{(\nu+\lambda u)^{d-1}\psi(u)}{(b+au)^{2d+3}}\mathrm{d}u \;(\text{by}\;\varphi(x_1)=0\;\text{and}\;\varphi'(x_1)=0),
\end{equation*}
which implies that $\varphi(x_0)>0$, this is in contradiction with the completeness of $\widetilde{g}$.

If $x_1=+\infty$, by $\eqref{4.31}$, we have $\gamma=0$ and $\mu<0$. By $\varphi(x_0)=0, \varphi'(x_0)=0$ and \eqref{eq7.1}, we have
\begin{equation*}
  \varphi(x)=\frac{(b+ax)^{2d+1}}{(\nu+\lambda x)^d}\int^x_{x_0}(x-u)\frac{(\nu+\lambda u)^{d-1}\psi(u)}{(b+au)^{2d+3}}\mathrm{d}u
\end{equation*}
which implies that $\varphi(x)\sim C_5x^{d+2}$ as $x\rightarrow+\infty$, this is in contradiction with the completeness of $\widetilde{g}$.\\

$(\mathrm{II-iii})$ For the case of $a>0$ and $\lambda<0$.\\

By $ax+b>0$ and $\nu+\lambda x>0$, we get $-\frac{b}{a}<x<-\frac{\nu}{\lambda}$, so $-\frac{b}{a}\leq x_0<x_1\leq-\frac{\nu}{\lambda}<\frac{b}{a}-\frac{2\nu}{\lambda}$.

By
\begin{equation*}
  \int_{\frac{x_0+x_1}{2}}^{x_1}\frac{1}{(b+ax)\sqrt{\varphi(x)}}\mathrm{d}x= +\infty,
\end{equation*}
we get $\varphi(x_1)=\varphi'(x_1)=0$. Then by \eqref{eq7.3.4}, we obtain $\gamma=\frac{\mu(\nu+\lambda x_1)}{(b+ax_1)^2}$ and
\begin{equation}\label{4.32}
    \varphi(x)= \frac{(b+ax)^{2d+1}(a\lambda x+2a\nu-b\lambda)}{(\nu+\lambda x)^{d}}\int_{x_1}^x\frac{(\nu+\lambda u)^{d}\left(\mu(\nu+\lambda u)-\gamma(b+au)^2\right)}{(b+au)^{2d+2}(b\lambda-2a\nu-a\lambda u)^2}\mathrm{d}u
\end{equation}
for $x\in(x_0,x_1)$. From \eqref{4.32} and $\varphi\neq 0$, it follows that $\mu<0$.

Note that when $c:=-\frac{b}{a}\leq x_0<x<x_1$,
\begin{equation*}
  a\lambda x+2a\nu-b\lambda>0, \frac{{\nu+\lambda x_1}}{(b+ax_1)^2}\geq 0,\;{\nu+\lambda x}-\frac{{\nu+\lambda x_1}}{(b+ax_1)^2}(b+ax)^2>0,
\end{equation*}
 so
\begin{equation*}
  \mu(\nu+\lambda x)-\gamma(b+ax)^2=\mu\left({\nu+\lambda x}-\frac{{\nu+\lambda x_0}}{(b+ax_0)^2}(b+ax)^2\right)<0,
\end{equation*}
 thus  $\varphi(x)>0$ for $c<x<x_1$ by \eqref{4.32}.

Using
\begin{equation*}
   \lim_{x\rightarrow c^{+}} \varphi(x)= \frac{-\mu}{2(2d+1)a^2}>0
\end{equation*}
and fiber metrics for $\widetilde{g}$ are complete, we have $x_0=c$.

Let
\begin{equation*}
  t=\int_{c}^{x(t)}\frac{\mathrm{d}u}{\varphi(u)},\;F(t)=\int_{c}^{x(t)}\frac{v\mathrm{d}v}{\varphi(v)},
\end{equation*}
by
\begin{equation*}
  \lim_{x\rightarrow x_1^{-}}\frac{\varphi(x)}{(x-x_1)^2}=-\frac {\mu}{2(b+ax_1)^2},
\end{equation*}
then
\begin{equation*}
  \lim_{x\rightarrow x_1^{-}}t(x)=+\infty, \;\lim_{x\rightarrow c^{+}}t(x)=0.
\end{equation*}
This shows that the metric $g_F$ is defined on $ M=\left\{(z,w)\in \Omega\times\mathbb{C}^{r}: 1< e^{\lambda\phi(z)}\|w\|^2\right\}$. Since
\begin{equation*}
\int_{x_0}^{\frac{x_0+x_1}{2}}\frac{1}{(b+ax)\sqrt{\varphi(x)}}\mathrm{d}x=+\infty,\;\int_{\frac{x_0+x_1}{2}}^{x_1}\frac{1}{(b+ax)\sqrt{\varphi(x)}}\mathrm{d}x=+\infty,
\end{equation*}
it follows that any fiber metric for $\widetilde{g}$ is complete.\\

$(\mathrm{II-iv})$ For the case of $a<0$ and $\lambda>0$.\\

By $ax+b>0$ and $\nu+\lambda x>0$, we get $-\frac{\nu}{\lambda}<x<-\frac{b}{a}$, so $\frac{b}{a}-\frac{2\nu}{\lambda}<-\frac{\nu}{\lambda}\leq x_0<x_1\leq -\frac{b}{a}$.

 By
\begin{equation*}
  \int^{\frac{x_0+x_1}{2}}_{x_0}\frac{1}{(b+ax)\sqrt{\varphi(x)}}\mathrm{d}x= +\infty,
\end{equation*}
we get $\varphi(x_0)=\varphi'(x_0)=0$. Then by \eqref{eq7.3.4}, we obtain $\gamma=\frac{\mu(\nu+\lambda x_0)}{(b+ax_0)^2}$ and
\begin{equation}\label{4.32.1}
    \varphi(x)= \frac{(b+ax)^{2d+1}(a\lambda x+2a\nu-b\lambda)}{(\nu+\lambda x)^{d}}\int_{x_0}^x\frac{(\nu+\lambda u)^{d}\left(\mu(\nu+\lambda u)-\gamma(b+au)^2\right)}{(b+au)^{2d+2}(b\lambda-2a\nu-a\lambda u)^2}\mathrm{d}u
\end{equation}
for $x\in(x_0,x_1)$. From \eqref{4.32.1} and $\varphi\neq 0$, it follows that $\mu<0$.

Note that when $x_0<x< c:=-\frac{b}{a}$,
\begin{equation*}
  b\lambda-2a\nu-a\lambda x>0, \frac{{\nu+\lambda x_0}}{(b+ax_0)^2}\geq 0,\;{\nu+\lambda x}-\frac{{\nu+\lambda x_0}}{(b+ax_0)^2}(b+ax)^2>0,
\end{equation*}
 so
\begin{equation*}
  \mu(\nu+\lambda x)-\gamma(b+ax)^2=\mu\left({\nu+\lambda x}-\frac{{\nu+\lambda x_0}}{(b+ax_0)^2}(b+ax)^2\right)<0
\end{equation*}
which implies that  $\varphi(x)>0$ for $x_0<x<c$.

Using
\begin{equation*}
   \lim_{x\rightarrow c^{-}} \varphi(x)= \frac{-\mu}{2(2d+1)a^2}>0
\end{equation*}
and fiber metrics for $\widetilde{g}$ are complete, we have $x_1=c$.

Let
\begin{equation*}
  t=\int_{c}^{x(t)}\frac{\mathrm{d}u}{\varphi(u)},\;F(t)=\int_{c}^{x(t)}\frac{v\mathrm{d}v}{\varphi(v)},
\end{equation*}
by
\begin{equation*}
  \lim_{x\rightarrow x_0^{+}}\frac{\varphi(x)}{(x-x_0)^2}=-\frac {\mu}{2(b+ax_0)^2},
\end{equation*}
then
\begin{equation*}
  \lim_{x\rightarrow x_0^{+}}t(x)=-\infty, \;\lim_{x\rightarrow c^{-}}t(x)=0.
\end{equation*}
This shows that the metric $g_F$ is defined on $ M=\left\{(z,w)\in \Omega\times\mathbb{C}^{r}: 0< e^{\lambda\phi(z)}\|w\|^2<1\right\}$. Since
\begin{equation*}
\int_{x_0}^{\frac{x_0+x_1}{2}}\frac{1}{(b+ax)\sqrt{\varphi(x)}}\mathrm{d}x=+\infty,\;\int_{\frac{x_0+x_1}{2}}^{x_1}\frac{1}{(b+ax)\sqrt{\varphi(x)}}\mathrm{d}x=+\infty,
\end{equation*}
it follows that any fiber metric for $\widetilde{g}$ is complete.\\

\noindent\textbf{Part $(\mathrm{II})$ for $r>1$}\\

Since $a\neq 0$, from \eqref{eq7.3.5}, we get $\nu=0$.

By Remark \ref{Re:2.1}, when $\nu=0$ and $r>1$,  there must be $\lambda>0$.\\

$(\mathrm{II-i})$  For the case of $a>0$ and $\lambda>0$.\\

By $ax+b>0$ and $\nu+\lambda x=\lambda x>0$, we get $x>\max\{-\frac{b}{a},0\}$, so $x_1>x_0\geq \max\{-\frac{b}{a},0\}$.

If $x_1<+\infty$, by
\begin{equation*}
  \int_{\frac{x_0+x_1}{2}}^{x_1}\frac{1}{(b+ax)\sqrt{\varphi(x)}}\mathrm{d}x= +\infty,
\end{equation*}
we get $\varphi(x_1)=\varphi'(x_1)=0$. Then by \eqref{eq7.3.5},  we obtain $\mu=\frac{r(ax_1+b)^2}{x_1}>0$. This is in contradiction with $\mu\leq 0$.

For $x_1=+\infty$, from \eqref{eq7.3.5}, we obtain
\begin{equation*}
  \frac{x^{n-1}\varphi(x)}{(ax+b)^{2n-1}(ax-b)}
  =C_1+\int_{\widetilde{x_0}}^x  \frac{ u^{n-1}\left(\mu u-r(b+au)^2\right)}{(b+au)^{2n}(au-b)^2}\mathrm{d}u,
\end{equation*}
where
\begin{equation*}
  \widetilde{x_0}:=x_0+1,C_1= \frac{ \widetilde{x_0}^{n-1}\varphi(\widetilde{x_0})}{(a\widetilde{x_0}+b)^{2n-1}(a \widetilde{x_0}-b)}.
\end{equation*}
Since
\begin{equation*}
 -\infty <C_1+\int_{\widetilde{x_0}}^{+\infty}  \frac{ u^{n-1}\left(\mu u-r(b+au)^2\right)}{(b+au)^{2n}(au-b)^2}\mathrm{d}u<+\infty,
\end{equation*}
 it follows that
\begin{equation*}
  \lim_{x\rightarrow+\infty} \frac{x^{n-1}\varphi(x)}{(ax+b)^{2n-1}(ax-b)}=C.
\end{equation*}
If $C\neq 0$, then $\varphi(x)\sim C_2x^{n+1}$ as $x\rightarrow +\infty$, this is in contradiction with
\begin{equation*}
  \int^{+\infty}_{\widetilde{x_0}}\frac{\mathrm{d}x}{(b+ax)\sqrt{\varphi(x)}}=+\infty,
\end{equation*}
thus $C=0$, so
\begin{equation}\label{IV-ii-1}
    \varphi(x)= \frac{(ax+b)^{2n-1}(ax-b)}{x^{n-1}}\int_{+\infty}^x \frac{ u^{n-1}\left(\mu u-r(b+au)^2\right)}{(b+au)^{2n}(au-b)^2}\mathrm{d}u
\end{equation}
for $x\in(x_0,+\infty)$.

As $x\rightarrow+\infty,$
\begin{equation*}
 \frac{(ax+b)^{2n-1}(ax-b)}{x^{n-1}} \sim a^{2n}x^{n+1}
\end{equation*}
and
\begin{equation*}
 \frac{ x^{n-1}\left(\mu x-r(b+ax)^2\right)}{(b+ax)^{2n}(ax-b)^2}\sim \frac{-r}{a^{2n}}\frac{1}{x^{n+1}}
\end{equation*}
it follows from \eqref{IV-ii-1} that
\begin{equation*}
 \lim_{x\rightarrow+\infty}\frac{\varphi(x)}{x}=\frac{r}{n}.
\end{equation*}
So
\begin{equation*}
  \int^{+\infty}_{\widetilde{x_0}}\frac{\mathrm{d}x}{(b+ax)\sqrt{\varphi(x)}}<+\infty,
\end{equation*}
this is in contradiction with the completeness of fiber metrics of $\widetilde{g}$.\\

$(\mathrm{II-ii})$  For the case of $a<0$ and $\lambda>0$.\\

By $ax+b>0$ and $\nu+\lambda x=\lambda x>0$, we get $0<x<-\frac{b}{a}$, so $b>0$ and $\frac{b}{a}<0\leq x_0<x_1\leq -\frac{b}{a}$.

 By
\begin{equation*}
  \int^{\frac{x_0+x_1}{2}}_{x_0}\frac{1}{(b+ax)\sqrt{\varphi(x)}}\mathrm{d}x= +\infty,
\end{equation*}
we get $\varphi(x_0)=\varphi'(x_0)=0$. Then by \eqref{eq7.3.5}, we obtain $\mu=\frac{r(ax_0+b)^2}{x_0}>0$, this is in contradiction with $\mu\leq 0$.\\

\begin{center}
\noindent \textbf{Part $(\mathrm{III})$ }\\
\end{center}

By \eqref{eq7.3.4}, using $\varphi(x_1)=0$,  we get
\begin{equation}\label{4.39}
 \varphi(x)=\frac{(a\lambda x+2a\nu-b\lambda)(ax+b)^{2d+1}}{(\nu+\lambda x)^d}\int_{x_1}^x\frac{(\nu+\lambda u)^d\left(\mu(\nu+\lambda u)-\gamma(au+b)^2\right)}{(au+b)^{2d+2}(a\lambda u+2a\nu-b\lambda)^2}\mathrm{d}u
\end{equation}
for $x_0<x<x_1$. Then
\begin{equation*}
 -1= \varphi'(x_1)=\lim_{x\rightarrow x_1^{-}}\frac{\varphi(x)}{x-x_1}=\frac {\mu \left(\lambda x_1+\nu \right) -\gamma (ax_1+b)^{2}}{(ax_1+b)(a\lambda x_1+2a\nu-b\lambda) }.
\end{equation*}
So
\begin{equation*}
 \gamma=\mu\frac{\nu+\lambda x_1}{(b+ax_1)^2}+\frac{a\lambda x_1+2a\nu-b\lambda}{b+ax_1}.
\end{equation*}

$(\mathrm{III-i})$ For the case of $a<0$ and $\lambda<0$.\\

By $ax+b>0$ and $\nu+\lambda x>0$, we get $x<\min\{-\frac{b}{a},-\frac{\nu}{\lambda}\}$, so $x_1\leq \min\{-\frac{b}{a},-\frac{\nu}{\lambda}\}$.

For $x_0>-\infty$, according to the completeness of the fibre metric for $\widetilde{g}$, namely
\begin{equation*}
  \int_{x_0}^{\frac{x_0+x_1}{2}}\frac{1}{(b+ax)\sqrt{\varphi(x)}}\mathrm{d}x=+\infty,
\end{equation*}
we have $\varphi(x_0)=\varphi'(x_0)=0$. Then by \eqref{eq7.3.4}, we obtain $\gamma=\frac{\mu(\nu+\lambda x_0)}{(b+ax_0)^2}<0$, $\mu<0$ and
\begin{equation}\label{4.40}
    \varphi(x)= -\frac{(b+ax)^{2d+1}(b\lambda-2a\nu-a\lambda x)}{(\nu+\lambda x)^{d}}\int_{x_0}^x\frac{(\nu+\lambda u)^{d}\left(\mu(\nu+\lambda u)-\gamma(b+au)^2\right)}{(b+au)^{2d+2}(b\lambda-2a\nu-a\lambda u)^2}\mathrm{d}u
\end{equation}
for $x\in(x_0,x_1)$.\\

$(\mathrm{III-i}-1)$ If $-\frac{b}{a}\leq-\frac{\nu}{\lambda}\leq\frac{b}{a}-\frac{2\nu}{\lambda}$.

Note that when $-\infty<x_0<x<x_1\leq c:=-\frac{b}{a}$, by
\begin{equation*}
  b\lambda-2a\nu-a\lambda x>0,\frac{{\nu+\lambda x_0}}{(b+ax_0)^2}>0,\;{\nu+\lambda x}-\frac{{\nu+\lambda x_0}}{(b+ax_0)^2}(b+ax)^2>0
\end{equation*}
and \eqref{4.40}, we get  $\varphi(x_1)>0$. This is in contradiction with $\varphi(x_1)=0$.

For $x_0=-\infty$, according to the proof of $(\mathrm{II-i}-1)$, we get $\gamma=0$, $\mu<0$ and
\begin{equation}\label{4.41}
     \varphi(x)=\frac{(b+ax)^{2d+1}(a\lambda x+2a\nu-b\lambda)}{(\nu+\lambda x)^{d}}
     \int_{-\infty}^x\frac{\mu(\nu+\lambda u)^{d+1}}{(b+au)^{2d+2}(a\lambda u+2a\nu-b\lambda)^2}\mathrm{d}u
\end{equation}
for $x\in(-\infty,x_1)$, which implies that $\varphi(x_1)>0$. This conflicts with $\varphi(x_1)=0$.\\

$(\mathrm{III-i}-2)$ If $\frac{b}{a}-\frac{2\nu}{\lambda}<-\frac{\nu}{\lambda}<-\frac{b}{a}$, then $\frac{b}{a}-\frac{2\nu}{\lambda}\leq x_0<x_1\leq-\frac{\nu}{\lambda}<-\frac{b}{a}$ or $ x_0<x_1\leq\frac{b}{a}-\frac{2\nu}{\lambda}<-\frac{\nu}{\lambda}<-\frac{b}{a}$ or $x_0<\frac{b}{a}-\frac{2\nu}{\lambda} <x_1\leq-\frac{\nu}{\lambda}<-\frac{b}{a}$.

If $-\infty< x_0$, and $\frac{b}{a}-\frac{2\nu}{\lambda}<x_1\leq-\frac{\nu}{\lambda}<-\frac{b}{a}$,   by
\begin{equation*}
 \gamma=\mu\frac{\nu+\lambda x_1}{(b+ax_1)^2}+\frac{a\lambda x_1+2a\nu-b\lambda}{b+ax_1}=\mu\frac{\nu+\lambda x_0}{(b+ax_0)^2}< 0,
\end{equation*}
we get
\begin{equation*}
  \mu(\nu+\lambda x)-\gamma(ax+b)^2<0
\end{equation*}
for $x_0<x<x_1$. So
\begin{equation*}
\psi(x):= d\gamma(ax+b)^2-(d+1)\mu(\nu+\lambda x)=-d(\mu(\nu+\lambda x)-\gamma(ax+b)^2)-\mu(\nu+\lambda x)> 0
\end{equation*}
for $x_0<x<x_1$. Using \eqref{eq7.1}, we have
\begin{equation*}
  \varphi(x)=\frac{(b+ax)^{2d+1}}{(\nu+\lambda x)^d}\int^x_{x_0}(x-u)\frac{(\nu+\lambda u)^{d-1}\psi(u)}{(b+au)^{2d+3}}\mathrm{d}u \;(\text{by}\;\varphi(x_0)=0\;\text{and}\;\varphi'(x_0)=0),
\end{equation*}
which implies that $\varphi(x_1)>0$, this conflicts with $\varphi(x_1)=0$.

If $-\infty< x_0<x_1\leq c:=\frac{b}{a} -\frac{2\nu}{\lambda}<-\frac{\nu}{\lambda}<-\frac{b}{a}$, by
\begin{equation*}
 \gamma=\frac{\mu(\nu+\lambda x_0)}{(b+ax_0)^2}<0, \mu(\nu+\lambda c)-\gamma(ac+b)^2=-{\frac {\mu\, \left( \nu\,a-b\lambda \right)  \left( a\lambda\,x_{{0}}+2\,\nu\,a-b\lambda \right) ^{2}}{a \left( ax_{{0}}+b \right) ^{2}{\lambda}^{2}}}<0
\end{equation*}
and
\begin{equation*}
   b\lambda-2a\nu-a\lambda x>0,\mu(\nu+\lambda x)-\gamma(ax+b)^2<0
\end{equation*}
for $x_0<x<c$. From \eqref{4.40} we have $\varphi(x_1)>0$, this conflicts with $\varphi(x_1)=0$.

If $-\infty= x_0<x_1\leq -\frac{\nu}{\lambda}<-\frac{b}{a}$, by $\eqref{2.20.1.1}$, we have $\gamma=0$ and $\mu<0$. Using
\begin{equation*}
  \gamma=\mu\frac{\nu+\lambda x_1}{(b+ax_1)^2}+\frac{a\lambda x_1+2a\nu-b\lambda}{b+ax_1},
\end{equation*}
it follows that $\frac{b}{a}-\frac{2\nu}{\lambda} <x_1$. From \eqref{4.39}, we get $\varphi(x)<0$ for $x<\frac{b}{a}-\frac{2\nu}{\lambda}$, this conflicts with $\varphi(x)>0$ for $x_0<x<x_1$. \\

$(\mathrm{III-ii})$ For the case of $a>0$ and $\lambda>0$.\\

By $ax+b>0$ and $\nu+\lambda x>0$, we get $x>\max\{-\frac{b}{a},-\frac{\nu}{\lambda}\}$, so $x_1>x_0\geq \max\{-\frac{b}{a},-\frac{\nu}{\lambda}\}$,
\begin{equation}\label{4.42}
    \varphi(x)= \frac{(b+ax)^{2d+1}(a\lambda x+2a\nu-b\lambda)}{(\nu+\lambda x)^{d}}\int_{x_1}^x\frac{(\nu+\lambda u)^{d}\left(\mu(\nu+\lambda u)-\gamma(b+au)^2\right)}{(b+au)^{2d+2}(b\lambda-2a\nu-a\lambda u)^2}\mathrm{d}u
\end{equation}
for $x\in(x_0,x_1)$, and
\begin{equation*}
 \gamma=\mu\frac{\nu+\lambda x_1}{(b+ax_1)^2}+\frac{a\lambda x_1+2a\nu-b\lambda}{b+ax_1}.
\end{equation*}

$(\mathrm{III-ii}-1)$ If $-\frac{b}{a}\geq-\frac{\nu}{\lambda}\geq\frac{b}{a}-\frac{2\nu}{\lambda}$.

Let
\begin{equation*}
  q(x)=\mu(\nu+\lambda x)-\gamma(b+ax)^2.
\end{equation*}

Note that when $c:=-\frac{b}{a}< x<x_1$, by $\mu\leq 0$, $q(-\frac{b}{a})\leq 0$ and $q(x_1)<0$, we get
\begin{equation*}
  a\lambda x+2a\nu-b\lambda>0,
\end{equation*}
\begin{equation*}
 \mu(\nu+\lambda x)-\gamma(b+ax)^2\leq 0\;(\gamma\leq 0)
\end{equation*}
and
\begin{equation*}
\mu(\nu+\lambda x)-\gamma(b+ax)^2\leq -\gamma(b+ax)^2\leq 0\;(\gamma> 0),
\end{equation*}
 so $\varphi(x)>0$ for $x\in(c,x_1)$ by \eqref{4.42}.

Using
\begin{equation*}
   \lim_{x\rightarrow c^{+}} \varphi(x)=\left\{\begin{array}{l}
                                                 \frac{-\mu}{2(2d+1)a^2}\;(\text{for}\;\frac{b}{a}<\frac{\nu}{\lambda}), \\\\
                                                 \frac{-\mu}{(d+2)a^2}\;(\text{for}\;\frac{b}{a}=\frac{\nu}{\lambda}),                                               \end{array}
   \right.
\end{equation*}
\begin{equation*}
  \lim_{x\rightarrow c^{+}} \frac{\varphi(x)}{(b+ax)^2}= \frac{\gamma}{2(2d-1)a(a\nu-b\lambda)}>0\;(\text{for}\;\mu=0\;\mathrm{and}\;\frac{b}{a}<\frac{\nu}{\lambda}),
\end{equation*}
\begin{equation*}
  \lim_{x\rightarrow c^{+}} \frac{\varphi(x)}{b+ax}=\frac{\gamma}{(d+1)a\lambda}>0\;(\text{for}\;\mu=0\;\mathrm{and}\;\frac{b}{a}=\frac{\nu}{\lambda})
\end{equation*}
and fiber metrics for $\widetilde{g}$ are complete, we have $x_0=c$.

For $\mu<0$, let
\begin{equation*}
  t=\int_{c}^{x(t)}\frac{\mathrm{d}u}{\varphi(u)},\;F(t)=\int_{c}^{x(t)}\frac{v\mathrm{d}v}{\varphi(v)},
\end{equation*}
by
\begin{equation*}
 \lim_{x\rightarrow c^{+}} \varphi(x)= \frac{-\mu}{2(2d+1)a^2}\;(\text{for}\;\frac{b}{a}<\frac{\nu}{\lambda}),
   \;\;\lim_{x\rightarrow c^{+}} \varphi(x)= \frac{-\mu}{(d+2)a^2}\;(\text{for}\;\frac{b}{a}=\frac{\nu}{\lambda})
\end{equation*}
and $\lim_{x\rightarrow x_1^{-}}\frac{\varphi(x)}{x_1-x}=1$, we have
\begin{equation*}
  \lim_{x\rightarrow x_1^{-}}t(x)=+\infty, \;\lim_{x\rightarrow c^{+}}t(x)=0.
\end{equation*}
This shows that the metric $g_F$ is defined on $ M=\left\{(z,w)\in \Omega\times\mathbb{C}^{r}: 1< e^{\lambda\phi(z)}\|w\|^2\right\}$. Since
\begin{equation*}
\int_{x_0}^{\frac{x_0+x_1}{2}}\frac{1}{(b+ax)\sqrt{\varphi(x)}}\mathrm{d}x=+\infty,
\end{equation*}
it follows that any fiber metric for $\widetilde{g}$ is complete at $x=x_0$.

For $\mu=0$, let
\begin{equation*}
  t=\int_{\frac{x_0+x_1}{2}}^{x(t)}\frac{\mathrm{d}u}{\varphi(u)},\;F(t)=\int_{\frac{x_0+x_1}{2}}^{x(t)}\frac{v\mathrm{d}v}{\varphi(v)},
\end{equation*}
by
\begin{equation*}
 \lim_{x\rightarrow c^{+}} \frac{\varphi(x)}{(b+ax)^2}=\frac{\gamma}{2(2d-1)a(a\nu-b\lambda)}>0\;(\text{for}\;\frac{b}{a}<\frac{\nu}{\lambda}),
   \lim_{x\rightarrow x_1^{-}}\frac{\varphi(x)}{x_1-x}=1
\end{equation*}
and
\begin{equation*}
   \lim_{x\rightarrow c^{+}} \frac{\varphi(x)}{b+ax}= \frac{\gamma}{(d+1)a\lambda}>0\;(\text{for}\;\frac{b}{a}=\frac{\nu}{\lambda}),
\end{equation*}
we get
\begin{equation*}
  \lim_{x\rightarrow x_1^{-}}t(x)=+\infty, \;\lim_{x\rightarrow c^{+}}t(x)=-\infty.
\end{equation*}
This shows that the metric $g_F$ is defined on $ M=\left\{(z,w)\in \Omega\times\mathbb{C}^{r}: 0< e^{\lambda\phi(z)}\|w\|^2\right\}$. Since
\begin{equation*}
\int_{x_0}^{\frac{x_0+x_1}{2}}\frac{1}{(b+ax)\sqrt{\varphi(x)}}\mathrm{d}x=+\infty,
\end{equation*}
it follows that any fiber metric for $\widetilde{g}$ is complete at $x=x_0$.\\

$(\mathrm{III-ii}-2)$ If $\frac{b}{a}-\frac{2\nu}{\lambda}>-\frac{\nu}{\lambda}>-\frac{b}{a}$, then $\frac{b}{a}-\frac{2\nu}{\lambda}\geq x_1>x_0\geq-\frac{\nu}{\lambda}>-\frac{b}{a}$ or $x_1>x_0\geq\frac{b}{a}-\frac{2\nu}{\lambda}>-\frac{\nu}{\lambda}>-\frac{b}{a}$ or $x_1>\frac{b}{a}-\frac{2\nu}{\lambda}>x_0\geq-\frac{\nu}{\lambda}>-\frac{b}{a}$.

If $\frac{b}{a}-\frac{2\nu}{\lambda}= x_1>x_0\geq-\frac{\nu}{\lambda}>-\frac{b}{a}$ , by $\mu\leq 0$, $\varphi(x_1)=0,\varphi'(x_1)=-1$ and $\eqref{eq7.3.4}$, we get
\begin{equation*}
  \gamma={\frac {\mu\,{\lambda}^{2}}{4a \left( b\lambda-a\nu \right) }}\leq 0,\;\mu(\nu+\lambda x)-\gamma(b+ax)^2=-a^2\gamma(x-x_1)^2.
\end{equation*}
Thus
\begin{equation*}
  \varphi(x)=\frac{(b+ax)^{2d+1}(x_1-x)}{(\nu+\lambda x)^{d}}\left (\frac{(\nu+\lambda x_1)^{d}}{(b+ax_1)^{2d+1}}+\frac{a\gamma}{\lambda}\int_{x_1}^x\frac{(\nu+\lambda u)^{d}}{(b+au)^{2d+2}}\mathrm{d}u\right)>0\;(x_0\leq x<x_1)
\end{equation*}
and
\begin{equation*}
  \int^{\frac{x_0+x_1}{2}}_{x_0}\frac{1}{(b+ax)\sqrt{\varphi(x)}}\mathrm{d}x<+\infty.
\end{equation*}
This is in contradiction with the completeness of fiber metrics of $\widetilde{g}$.

If $\frac{b}{a}-\frac{2\nu}{\lambda}> x_1>x_0\geq-\frac{\nu}{\lambda}>-\frac{b}{a}$ , then
\begin{equation*}
\gamma=\mu\frac{\nu+\lambda x_1}{(b+ax_1)^2}+\frac{a\lambda x_1+2a\nu-b\lambda}{b+ax_1}, \; b\lambda-2a\nu-a\lambda x>0
\end{equation*}
for $x\leq x_1$. By the completeness of fiber metrics of $\widetilde{g}$, we get $\varphi(x_0)=\varphi'(x_0)=0$, thus by $\eqref{eq7.3.4}$,
\begin{equation*}
  \gamma=\mu\frac{\nu+\lambda x_0}{(b+ax_0)^2}=\mu\frac{\nu+\lambda x_1}{(b+ax_1)^2}+\frac{a\lambda x_1+2a\nu-b\lambda}{b+ax_1},
\end{equation*}
namely,
\begin{equation*}
  \mu\frac{\nu+\lambda x_0}{(b+ax_0)^2}-\mu\frac{\nu+\lambda x_1}{(b+ax_1)^2}=\frac{a\lambda x_1+2a\nu-b\lambda}{b+ax_1}.
\end{equation*}
Since
\begin{equation*}
  \frac{\mathrm{d}}{\mathrm{dx}}\frac{\nu+\lambda x}{(b+ax)^2}=\frac{b\lambda-2a\nu-a\lambda x}{(ax+b)^3}>0
\end{equation*}
for $x_0\leq x\leq x_1$ and $\mu\leq 0$, it follows that
\begin{equation*}
   \mu\frac{\nu+\lambda x_0}{(b+ax_0)^2}-\mu\frac{\nu+\lambda x_1}{(b+ax_1)^2}\geq 0,
\end{equation*}
this conflicts with $\frac{a\lambda x_1+2a\nu-b\lambda}{b+ax_1}<0$.

If $-\frac{b}{a}<-\frac{\nu}{\lambda}\leq x_0$ and $\frac{b}{a}-\frac{2\nu}{\lambda}<x_1$,   by
\begin{equation*}
 \gamma=\mu\frac{\nu+\lambda x_1}{(b+ax_1)^2}+\frac{a\lambda x_1+2a\nu-b\lambda}{b+ax_1}=\mu\frac{\nu+\lambda x_0}{(b+ax_0)^2}< 0,
\end{equation*}
we get
\begin{equation*}
  \mu(\nu+\lambda x)-\gamma(ax+b)^2<0
\end{equation*}
for $x_0<x<x_1$. So
\begin{equation*}
\psi(x):= d\gamma(ax+b)^2-(d+1)\mu(\nu+\lambda x)=-d(\mu(\nu+\lambda x)-\gamma(ax+b)^2)-\mu(\nu+\lambda x)> 0
\end{equation*}
for $x_0<x<x_1$. Using \eqref{eq7.1}, we have
\begin{equation*}
  \varphi(x)=\frac{(b+ax)^{2d+1}}{(\nu+\lambda x)^d}\int^x_{x_0}(x-u)\frac{(\nu+\lambda u)^{d-1}\psi(u)}{(b+au)^{2d+3}}\mathrm{d}u \;(\text{by}\;\varphi(x_0)=0\;\text{and}\;\varphi'(x_0)=0),
\end{equation*}
which implies that $\varphi(x_1)>0$, this conflicts with $\varphi(x_1)=0$.\\

$(\mathrm{III-iii})$ For the case of $a>0$ and $\lambda<0$.\\

By $ax+b>0$ and $\nu+\lambda x>0$, we get $-\frac{b}{a}<x<-\frac{\nu}{\lambda}$, so $-\frac{b}{a}\leq x_0<x_1\leq-\frac{\nu}{\lambda}<\frac{b}{a}-\frac{2\nu}{\lambda}$.

Let
\begin{equation*}
  q(x):=\mu(\nu+\lambda x)-\gamma(b+ax)^2.
\end{equation*}

Note that when $c:=-\frac{b}{a}\leq x_0<x<x_1$, by
\begin{equation*}
 \gamma=\mu\frac{\nu+\lambda x_1}{(b+ax_1)^2}+\frac{a\lambda x_1+2a\nu-b\lambda}{b+ax_1}, \;q(c)\leq 0,\;q(x_1)<0,\;\mu\leq 0,
\end{equation*}
 we have
\begin{equation*}
  q(x)=\mu(\nu+\lambda x)-\gamma(b+ax)^2< 0,\; a\lambda x+2a\nu-b\lambda>0,
\end{equation*}
for $x\in(c,x_1)$. From \eqref{4.39}, so $\varphi(x)>0$ for $x\in(c,x_1)$.

Using
\begin{equation*}
   \lim_{x\rightarrow c^{+}} \varphi(x)= \frac{-\mu}{2(2d+1)a^2}>0\;(\mu<0),
\end{equation*}
\begin{equation*}
  \lim_{x\rightarrow c^{+}} \frac{\varphi(x)}{(b+ax)^2}= \frac{\gamma}{2(2d-1)a(a\nu-b\lambda)}>0\;(\mu=0)
\end{equation*}
and fiber metrics for $\widetilde{g}$ are complete, we have $x_0=c$.

For $\mu<0$, let
\begin{equation*}
  t=\int_{c}^{x(t)}\frac{\mathrm{d}u}{\varphi(u)},\;F(t)=\int_{c}^{x(t)}\frac{v\mathrm{d}v}{\varphi(v)},
\end{equation*}
by
\begin{equation*}
  \lim_{x\rightarrow x_1^{-}}\frac{\varphi(x)}{x_1-x}=1,
\end{equation*}
then
\begin{equation*}
  \lim_{x\rightarrow x_1^{-}}t(x)=+\infty, \;\lim_{x\rightarrow c^{+}}t(x)=0.
\end{equation*}
This shows that the metric $g_F$ is defined on $ M=\left\{(z,w)\in \Omega\times\mathbb{C}^{r}: 1< e^{\lambda\phi(z)}\|w\|^2\right\}$. Since
\begin{equation*}
\int_{x_0}^{\frac{x_0+x_1}{2}}\frac{1}{(b+ax)\sqrt{\varphi(x)}}\mathrm{d}x=+\infty,
\end{equation*}
it follows that any fiber metric for $\widetilde{g}$ is complete.

For $\mu=0$, let
\begin{equation*}
  t=\int_{\frac{x_0+x_1}{2}}^{x(t)}\frac{\mathrm{d}u}{\varphi(u)},\;F(t)=\int_{\frac{x_0+x_1}{2}}^{x(t)}\frac{v\mathrm{d}v}{\varphi(v)},
\end{equation*}
by
\begin{equation*}
 \lim_{x\rightarrow c^{+}} \frac{\varphi(x)}{(x-c)^2}= \frac{a\gamma}{2(2d-1)(a\nu-b\lambda)}>0,\; \lim_{x\rightarrow x_1^{-}}\frac{\varphi(x)}{x_1-x}=1,
\end{equation*}
then
\begin{equation*}
  \lim_{x\rightarrow x_1^{-}}t(x)=+\infty, \;\lim_{x\rightarrow c^{+}}t(x)=-\infty.
\end{equation*}
This shows that the metric $g_F$ is defined on $ M=\left\{(z,w)\in \Omega\times\mathbb{C}^{r}: 0< e^{\lambda\phi(z)}\|w\|^2\right\}$. Since
\begin{equation*}
\int_{x_0}^{\frac{x_0+x_1}{2}}\frac{1}{(b+ax)\sqrt{\varphi(x)}}\mathrm{d}x=+\infty,
\end{equation*}
it follows that any fiber metric for $\widetilde{g}$ is complete.\\

$(\mathrm{III-iv})$ For the case of $a<0$ and $\lambda>0$.\\

By $ax+b>0$ and $\nu+\lambda x>0$, we get $-\frac{\nu}{\lambda}<x<-\frac{b}{a}$, so $\frac{b}{a}-\frac{2\nu}{\lambda}<-\frac{\nu}{\lambda}\leq x_0<x_1\leq -\frac{b}{a}$.

By $\varphi(x_0)=\varphi'(x_0)=0$ and \eqref{eq7.3.4}, we obtain $\gamma=\frac{\mu(\nu+\lambda x_0)}{(b+ax_0)^2}$ and
\begin{equation*}
    \varphi(x)= \frac{(b+ax)^{2d+1}(a\lambda x+2a\nu-b\lambda)}{(\nu+\lambda x)^{d}}\int_{x_0}^x\frac{(\nu+\lambda u)^{d}\left(\mu(\nu+\lambda u)-\gamma(b+au)^2\right)}{(b+au)^{2d+2}(b\lambda-2a\nu-a\lambda u)^2}\mathrm{d}u
\end{equation*}
for $x\in(x_0,x_1)$.

When $x_0<x< c:=-\frac{b}{a}$, from
\begin{equation*}
  b\lambda-2a\nu-a\lambda x>0, \frac{{\nu+\lambda x_0}}{(b+ax_0)^2}\geq 0,\;{\nu+\lambda x}-\frac{{\nu+\lambda x_0}}{(b+ax_0)^2}(b+ax)^2>0,
\end{equation*}
 it follows that
\begin{equation*}
  \mu(\nu+\lambda x)-\gamma(b+ax)^2=\mu\left({\nu+\lambda x}-\frac{{\nu+\lambda x_0}}{(b+ax_0)^2}(b+ax)^2\right)\leq 0.
\end{equation*}
 Hence $\varphi(x)<0$ for  $\mu<0$ and $x\in(x_0,c)$, and $\varphi(x)=0$ for  $\mu=0$ and $x\in(x_0,c)$.
 This is in contradiction with  $\varphi(x)>0$ for $x\in(x_0,x_1)$.\\

\begin{center}
\noindent \textbf{Part $(\mathrm{IV})$ }\\
\end{center}

Since $a\neq 0$, from \eqref{eq7.3.5}, we get $\nu=0$.

By Remark \ref{Re:2.1}, when $\nu=0$ and $r>1$,  there must be $\lambda>0$ and $x=F'(t)>0$.

By \eqref{eq7.3.5}, using $\varphi(x_1)=0$,  we get
\begin{equation}\label{III-1}
    \varphi(x)= \frac{(ax+b)^{2n-1}(ax-b)}{x^{n-1}}\int_{x_1}^x \frac{ u^{n-1}\left(\mu u-r(b+au)^2\right)}{(b+au)^{2n}(au-b)^2}\mathrm{d}u
\end{equation}
for $x\in(x_0,x_1)$. Then
\begin{equation*}
 -1= \varphi'(x_1)=\lim_{x\rightarrow x_1^{-}}\frac{\varphi(x)}{x-x_1}=\frac {\mu  x_1 -r (ax_1+b)^{2}}{(ax_1+b)(a x_1-b)}.
\end{equation*}
So
\begin{equation*}
 \mu=\frac{(ax_1+b)(r(ax_1+b)-(ax_1-b))}{x_1}.
\end{equation*}
Since $\mu\leq 0$, $x_1>0$ and $ax_1+b>0$, we have $r\leq \frac{ax_1-b}{ax_1+b}$.\\

$(\mathrm{IV-i})$ For the case of $a>0$ and $\lambda>0$.\\

By $ax+b>0$ and $\nu+\lambda x>0$, we get $x>\max\{-\frac{b}{a},0\}$, so $x_1>x_0\geq \max\{-\frac{b}{a},0\}$.\\

$(\mathrm{IV-i}-1)$ If $b\leq 0$, namely $-\frac{b}{a}\geq 0\geq\frac{b}{a}$.

By $1<r\leq \frac{ax_1-b}{ax_1+b}$, we have $b<0$ and $x_1\leq -\frac{(r+1)b}{(r-1)a}$.

Let
\begin{equation*}
  q(x)=\mu x-r(b+ax)^2.
\end{equation*}
Note that when $c:=-\frac{b}{a}< x<x_1$, by $\mu\leq 0$, $q(-\frac{b}{a})\leq 0$ and $q(x_1)<0$, we get
\begin{equation*}
  a x-b>0\;\text{and}\;\mu x-r(b+ax)^2\leq -r(b+ax)^2\leq 0,
\end{equation*}
 so $\varphi(x)>0$ for $x\in(c,x_1)$ by \eqref{III-1}.

Using
\begin{equation*}
   \lim_{x\rightarrow c^{+}} \varphi(x)=\frac{-\mu}{2(2n-1)a^2}\geq 0,
\end{equation*}
\begin{equation*}
  \lim_{x\rightarrow c^{+}} \frac{\varphi(x)}{(b+ax)^2}= \frac{-r}{2(2n-3)ab}>0\;(\text{for}\;\mu=0)
\end{equation*}
and fiber metrics for $\widetilde{g}$ are complete, we have $x_0=c$.

For $\mu<0$, let
\begin{equation*}
  t=\int_{c}^{x(t)}\frac{\mathrm{d}u}{\varphi(u)},\;F(t)=\int_{c}^{x(t)}\frac{v\mathrm{d}v}{\varphi(v)},
\end{equation*}
by
\begin{equation*}
 \lim_{x\rightarrow c^{+}} \varphi(x)= \frac{-\mu}{2(2d+1)a^2}>0
\end{equation*}
and $\lim_{x\rightarrow x_1^{-}}\frac{\varphi(x)}{x_1-x}=1$, we have
\begin{equation*}
  \lim_{x\rightarrow x_1^{-}}t(x)=+\infty, \;\lim_{x\rightarrow c^{+}}t(x)=0.
\end{equation*}
This shows that the metric $g_F$ is defined on $ M=\left\{(z,w)\in \Omega\times\mathbb{C}^{r}: 1< e^{\lambda\phi(z)}\|w\|^2\right\}$. Since
\begin{equation*}
\int_{x_0}^{\frac{x_0+x_1}{2}}\frac{1}{(b+ax)\sqrt{\varphi(x)}}\mathrm{d}x=+\infty,
\end{equation*}
it follows that any fiber metric for $\widetilde{g}$ is complete at $x=x_0$.

For $\mu=0$, let
\begin{equation*}
  t=\int_{\frac{x_0+x_1}{2}}^{x(t)}\frac{\mathrm{d}u}{\varphi(u)},\;F(t)=\int_{\frac{x_0+x_1}{2}}^{x(t)}\frac{v\mathrm{d}v}{\varphi(v)},
\end{equation*}
by
\begin{equation*}
 \lim_{x\rightarrow c^{+}} \frac{\varphi(x)}{(b+ax)^2}=\frac{-r}{2(2n-3)ab}>0\;\text{and}\;
   \lim_{x\rightarrow x_1^{-}}\frac{\varphi(x)}{x_1-x}=1,
\end{equation*}
we get
\begin{equation*}
  \lim_{x\rightarrow x_1^{-}}t(x)=+\infty, \;\lim_{x\rightarrow c^{+}}t(x)=-\infty.
\end{equation*}
This shows that the metric $g_F$ is defined on $ M=\left\{(z,w)\in \Omega\times\mathbb{C}^{r}: 0< e^{\lambda\phi(z)}\|w\|^2\right\}$. Since
\begin{equation*}
\int_{x_0}^{\frac{x_0+x_1}{2}}\frac{1}{(b+ax)\sqrt{\varphi(x)}}\mathrm{d}x=+\infty,
\end{equation*}
it follows that any fiber metric for $\widetilde{g}$ is complete at $x=x_0$.\\

$(\mathrm{IV-i}-2)$ If $b>0$,  then
\begin{equation*}
 \mu=\frac{(ax_1+b)(r(ax_1+b)-(ax_1-b))}{x_1}>0.
\end{equation*}
This conflicts with $\mu\leq 0$.\\

$(\mathrm{IV-ii})$ For the case of $a<0$ and $\lambda>0$.\\

By $ax+b>0$ and $\nu+\lambda x>0$, we get $0<x<-\frac{b}{a}$, so $b>0$ and $0\leq x_0<x_1\leq -\frac{b}{a}$.

Using $\varphi(x_0)=\varphi'(x_0)=0$ and \eqref{eq7.3.5},  we obtain $\mu=\frac{r(ax_0+b)^2}{x_0}>0$. This conflicts with $\mu\leq 0$.

\end{proof}

\vskip0.4cm
\noindent
\textbf{Acknowledgement.}\ {\small This work is partially supported by NSFC (No.12071354) and  the Scientific Research Fund of Leshan Normal University (No.DGZZ202024)..

\end{document}